\newtheorem{theorem}{Theorem}[section]
\theoremstyle{definition}
\newtheorem{remark}[theorem]{Remark}
\newcommand{\dx}{{\rm d}x}
\numberwithin{equation}{section}
\title{
The existence and instability of blowing-up steady states for the  Shigesada-Kawasaki-Teramoto competition model with cross-diffusion}
\author{Kousuke Kuto
\\ {\small Department of Applied Mathematics, Waseda University}\\
{\small
3-4-1 Ohkubo, Shinjuku-ku,
Tokyo 169-8555,
Japan}\\
\\
Yaping Wu\footnote{
Corresponding author,
e-mail address:
yaping\_wu@hotmail.com (Y.\,Wu).}
\\ {\small School of Mathematical Sciences, Capital Normal University }\\
{\small
Beijing 100048, China}\\
}
\theoremstyle{plain}
\newtheorem{thm}{Theorem}[section]
\newtheorem{prop}[thm]{Proposition}
\newtheorem{lem}[thm]{Lemma}
\theoremstyle{definition}
\date{}
\begin{document}
\maketitle
\begin{abstract}
This paper is concerned with the existence and stability of a type of blowing-up positive steady states for a
 shadow system of the Shigesada-Kawasaki-Teramoto two species competition model and the perturbed SKT model  with a large enough  cross diffusion parameter and bounded random diffusion parameters.
The asymptotic behavior of coexistence steady states
to the SKT model as one of the cross-diffusion terms
tends to infinity was studied by Lou and Ni \cite{LN2}
and it was revealed that
almost all coexistence states can be characterized by
one of three  shadow systems.
In \cite{Ku1},
the first author of this paper studied one of these
shadow systems with one cross diffusion term in one space dimension
and proved that a component on the bifurcation branch
blows up
as a bifurcation parameter approaches the least positive eigenvalue of $-\Delta$
with homogeneous Neumann boundary condition.
By applying a different approach from \cite{Ku1} based on special transformation and Lyapunov-Schmidt reduction method, in this paper we derive the existence and the detailed asymptotic structure of several branches of positive steady states
near the blow-up points  in one or multi-dimensional cases for the shadow system with one or two cross diffusion terms, and all the
branches of the large steady states obtained for the shadow system near the blow-up points are proved to be spectrally unstable.
 Further by applying perturbation argument, we also proved  the existence and instability  of several branches of
 the perturbed  positive steady states for the original SKT model  when  one of cross diffusion parameter is large enough.

\medskip
\noindent
{\it MSC}\ : 35B35, 35B40, 35K20, 35K57, 35P05.
\par
\noindent
{\it Keywords}\ :\
competition model;
cross-diffusion;
Lyapunov-Schmidt reduction; stability of steady states;
spectral analysis.
\end{abstract}

\section{Introduction}
In this paper,
we are concerned with the following
Lotka-Volterra competition model
with one or two quasilinear cross diffusion terms
\begin{equation}\label{SKT}
\begin{cases}
u_{t}=d_1\Delta[\,(1+\alpha v)u\,]+u(a_{1}-b_{1}u-c_{1}v)\ \ &\mbox{in}\
\Omega\times (0,T),\\
v_{t}=d_2\Delta[\,(1+\beta u)v\,]+v(a_{2}-b_{2}u-c_{2}v)\ \ &\mbox{in}\
\Omega\times (0,T),\\
\frac{\partial u}{\partial \nu }=\frac{\partial v}{\partial \nu }=0\ \ &\mbox{on}\
\partial\Omega\times (0,T),\\
u(x,0)=u_{0}(x)\ge 0,\ v(x,0)=v_{0}(x)\ge 0
\ \ &\mbox{in}\ \Omega,
\end{cases}
\end{equation}
where $0<T\le +\infty$; $\Omega\,(\subset\mathbb{R}^{N})$ is a bounded domain
with a smooth boundary $\partial\Omega$; $\nu(x)$ denotes the  outward unit  normal vector  at $x\in\partial\Omega$; the unknown functions $u(x,t)$ and $v(x,t)$,
respectively, stand for the population densities of
competing species
at location $x$ in the habitat $\Omega$
and time $t>0$; and the
positive constants
$(a_{1}, a_{2})$,
$(b_{1}, c_{2})$
and
$(c_{1}, b_{2})$
represent the birth rates of respective species,
degrees of the intra-specific competitions
and
the degrees  of the inter-specific competitions; and the
positive constants $d_{i}$
represent the rates of diffusion caused by
random movement of individuals of each species.
The nonlinear diffusion term
$d_1\alpha \Delta (uv)$
(resp. $d_2\beta \Delta (uv)$)
is usually referred as
the {\it SKT type of  cross-diffusion} term which describes an ability
of the species of $u$ (resp. $v$) which diffuses from the high
density region of the competitor $v$ (resp. $u$)
toward the low density region;
we refer to \cite{OL} for the ecological background of
such cross-diffusion terms.

A class of Lotka-Volterra competition models
with the SKT type of quasilinear cross-diffusion terms were first proposed
by Shigesada, Kawasaki and Teramoto \cite{SKT}
for the purpose of realizing the segregation phenomena
of two competing species observed
in the real ecosystem, and the
system \eqref{SKT} is a simplified  Shigesada-Kawasaki-Teramoto competition model with cross diffusion under the homogeneous Neumann boundary condition,  which will be simply called the SKT  system in the following of this paper.

 Since the pioneering work \cite{SKT}, the effect of  cross-diffusion on  global existence of solutions in time, and the existence and stability/instability of nontrivial steady states for SKT types of cross diffusion systems have been widely and deeply investigated by many mathematicians, and the review of  some  earlier important research works on SKT types of cross diffusion systems can be found in the survey papers \cite{Ju,Ni,Yam1,Yam2}.

In this paper we  focus on the investigation of the  existence and  stability/instability of some special types of co-existence  steady states $(u(x),v(x))$ to the SKT system \eqref{SKT} with a large enough cross diffusion parameter,  where the steady states are nontrivial positive  solutions
to the Neumann boundary value  problem of the following nonlinear  elliptic system:
\begin{equation}\label{sSKT}
\begin{cases}
d_1\Delta[\,(1+\alpha v)u\,]+u(a_{1}-b_{1}u-c_{1}v)=0\ \ &\mbox{in}\
\Omega,\\
d_2\Delta[\,(1+\beta u)v\,]+v(a_{2}-b_{2}u-c_{2}v)=0\ \ &\mbox{in}\
\Omega,\\
\frac{\partial u}{\partial \nu }=\frac{\partial v}{\partial \nu }=0\ \ &\mbox{on}\
\partial\Omega,\\
u> 0,\ v> 0
\ \ &\mbox{in}\ \Omega.
\end{cases}
\end{equation}
The first important theoretical work on the existence of nontrivial positive steady states of \eqref{SKT} with a large  cross diffusion parameter is due to Mimura, Nishiura, et al.\cite{MNTT}, in which it was shown that for the strong competition case $\frac{b_1}{b_2}<\frac{c_1}{c_2}$ when $d_2>0$ is small,  $d_1$ is large enough and $\alpha>0$ is not small, the SKT model  \eqref{SKT} with one dimensional $\Omega$ admits  several types of positive steady states with transition layers or boundary layers; and the spectral stability/instability of such steady states of \eqref{SKT} were proved in \cite{Ka1}.

The existence/nonexistence of nontrivial positive solutions   to the stationary  SKT  system \eqref{sSKT} with one dimensional or multi-dimensional $\Omega$  were widely and deeply investigated in \cite{LN1} and \cite{LN2}. Especially for the system \eqref{sSKT} with large $\alpha $ and bounded $d_2>0$ and small $\beta\ge 0$, Lou and Ni \cite{LN2} proved the uniform boundedness of all positive steady  states and
found several types of  {\it shadow systems} which can characterize
the limits of positive solutions of \eqref{sSKT} as
$\alpha\to\infty$ with $d_1\to d_\infty>0$ or $d_1\to \infty$
for the fixed $d_2$ and $\beta\ge 0$.
To be precise,
three types of  shadow systems were obtained in the following sense:

\begin{thm}[{\cite[Theorem 1.4]{LN2}}]\label{LNthm}
Suppose that $N\le 3$,
$a_{1}/a_{2}\neq b_{1}/b_{2}$,
$a_{1}/a_{2}\neq c_{1}/c_{2}$
and
$a_{2}/d_{2}\neq\lambda_{j}$ for
any positive eigenvalue $\lambda_{j}$
of $-\Delta$
with the homogeneous Neumann boundary condition on $\partial\Omega$.

Let $(u_n, v_n)$ be a sequence of positive nontrivial  solutions of \eqref{sSKT} with $(d_1,\;\alpha)=(d_{1,n},\alpha_n)$ and each fixed $d_2>0$ and $\beta\ge 0$, then there exists a small
$\delta=$ $\delta (a_{i}, b_{i}, c_{i}, d_{i})>0$ such that
if $\beta\le\delta$,
the following conclusions hold.

(1) If $\alpha_n\rightarrow\infty$ and $d_{1,\;n}\rightarrow d_1\in(0,\;\infty)$ as $n\to +\infty$, then by passing to a  subsequence if necessary, the sequence of $\{(u_n, v_n)\}$ must satisfy either (i) or (ii) below;

(2) If $\alpha_{n}\rightarrow\infty$ and $d_{1,\;n}\rightarrow \infty$ as $n\to +\infty$, then by passing to a subsequence if necessary, the sequence $\{(u_n, v_n)\}$ must satisfy either (i) or (iii) below, where

\noindent (i) $(u_n,\; v_n)$ converges uniformly to positive $(\frac{\tau}{v}, v)$ as $n\to +\infty$, where $\tau $ is a positive constant  and $(\tau, v(x) )$ satisfies
\begin{eqnarray}\label{LNlim1}
\begin{cases}
\displaystyle\int_{\Omega}
\dfrac{1}{v}\left(
a_{1}-\dfrac{b_{1}\tau }{v}-c_{1}v\right) =0\; ,\\
d_{2}\Delta v+v(a_{2}-c_{2}v)-b_{2}\tau =0
\ \ &\mbox{in}\ \Omega,\\
\frac{\partial v}{\partial \nu }=0
\ \ &\mbox{on}\ \partial\Omega;\\
\end{cases}
\end{eqnarray}
(ii)   $(u_n,\; \alpha_n v_n)$ converges uniformly to positive $(u, w)$ as $n\to +\infty$, where $(u, w)$ is  a positive solution of
\begin{eqnarray}\label{LNlim2}
\left\{
 \begin{array}{ll}
 d_1\Delta[(1 +w)u]+u(a_1-b_1u)=0 \;\; & \mbox{in}\ \Omega,\\
 d_2\Delta [(1+\beta u)w]+w(a_2-b_2u)=0 \;\; &\mbox{in}\ \Omega,\\
 \frac{\partial u}{\partial \nu }=
\frac{\partial w}{\partial \nu }=0 \;\; &\mbox{on}\ \partial\Omega;
\end{array}
\right.
\end{eqnarray}
(iii) $(u_n,\;\alpha_n v_n)$ converges uniformly to positive $(\frac{\tau}{1+w}, w)$, where
$\tau$ is a positive constant and $(\tau, w(x))$ satisfies
\begin{eqnarray}\label{LNlim3}
\left\{
 \begin{aligned}
&\int_{\Omega}\frac{a_1}{(1+w)}-\int_{\Omega}\frac{b_1\tau}{(1+w)^2}=0,\\
&d_2\Delta [(1+\beta \frac{\tau}{1+w})w]+w\left(a_2-\frac{b_2\tau}{1+w}\right)=0\   \mbox{in}\ \Omega,\\
&w>0,\ x\in \Omega;\ \frac{\partial w}{\partial \nu}=0\;\; \mbox{on} \ \partial\Omega.
\end{aligned}
\right.
\end{eqnarray}
\end{thm}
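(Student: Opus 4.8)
The plan is to argue by compactness: starting from the uniform a priori $L^\infty$-bound on all positive solutions of \eqref{sSKT} (valid uniformly in $\alpha$ and $d_1$, which I take as the technical input established beforehand via the $L^p$/duality estimates for cross-diffusion systems together with elliptic regularity, using $N\le 3$), I would extract convergent subsequences of $\{u_n\}$ and $\{v_n\}$ and then pass to the limit in suitably rescaled forms of the two equations. The whole argument is organized around two auxiliary quantities: the flux variable $\Phi_n:=(1+\alpha_n v_n)u_n$ appearing in the first equation, and the rescaled density $w_n:=\alpha_n v_n$. Integrating each equation of \eqref{sSKT} against $1$ and using the Neumann condition yields the two conservation identities $\int_\Omega u_n(a_1-b_1u_n-c_1v_n)=0$ and $\int_\Omega v_n(a_2-b_2u_n-c_2v_n)=0$, which will become the integral constraints in \eqref{LNlim1} and \eqref{LNlim3} after passage to the limit.

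First I would establish the basic dichotomy for $\{v_n\}$: after passing to a subsequence, either $v_n\to v\not\equiv0$ or $v_n\to0$ uniformly. In the first case the strong maximum principle applied to the limit of the second equation forces $v>0$ on $\overline\Omega$, hence $\alpha_n v_n\to\infty$; dividing the first equation by $\alpha_n$ gives $d_{1,n}\Delta(\Phi_n/\alpha_n)\to0$, so that $\Phi_n/\alpha_n$ converges to a harmonic Neumann function, i.e. to a constant $\tau$. Since $\Phi_n/\alpha_n=(u_n/\alpha_n)+u_nv_n\to uv$, this yields $u=\tau/v$, with $\tau>0$ because $u,v>0$. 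Substituting $u_n\to\tau/v$ into the first conservation identity produces the integral constraint of \eqref{LNlim1}, while the limit of the second equation (where $(1+\beta u_n)v_n=v_n+\beta u_nv_n\to v+\beta\tau$, so the $\beta$-term is asymptotically constant and disappears under $\Delta$) produces its second equation. This is conclusion (i), and it occurs in both regimes (1) and (2).

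In the complementary case $v_n\to0$ I would work with $w_n=\alpha_n v_n$. Multiplying the second equation by $\alpha_n$ rewrites it as $d_2\Delta[(1+\beta u_n)w_n]+w_n(a_2-b_2u_n-c_2v_n)=0$, while the first equation keeps $1+\alpha_n v_n=1+w_n$ bounded and $c_1v_n\to0$. Elliptic estimates then give $w_n\to w$ and, according to the regime, two sub-cases. If $d_{1,n}\to d_1\in(0,\infty)$ the first equation passes directly to $d_1\Delta[(1+w)u]+u(a_1-b_1u)=0$ and the rescaled second equation to $d_2\Delta[(1+\beta u)w]+w(a_2-b_2u)=0$, which is \eqref{LNlim2}, i.e. conclusion (ii). If instead $d_{1,n}\to\infty$, dividing the first equation by $d_{1,n}$ forces $\Delta[(1+w_n)u_n]\to0$, so $(1+w)u\equiv\tau$ is constant, giving $u=\tau/(1+w)$; feeding this into the first conservation identity and cancelling $\tau$ yields the integral constraint and the second equation of \eqref{LNlim3}, i.e. conclusion (iii).

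The hard part will be the nontriviality and nondegeneracy book-keeping that legitimizes this clean picture, and this is precisely where the hypotheses enter. Ruling out a total collapse $w_n\to0$ in the vanishing case requires normalizing $\tilde w_n=w_n/\|w_n\|_\infty$ and observing that its limit would satisfy $d_2\Delta\tilde w+a_2\tilde w=0$ with $\tilde w\ge0$, $\tilde w\not\equiv0$, which is excluded by the non-resonance condition $a_2/d_2\neq\lambda_j$; the same condition guarantees that the limiting second equations sit away from an eigenvalue and that convergence is to a genuine positive solution. The conditions $a_1/a_2\neq b_1/b_2$ and $a_1/a_2\neq c_1/c_2$ are used to prevent the subsequence from drifting to a semi-trivial or constant coexistence state, i.e. to ensure that the alternatives (i) and (ii)/(iii) are exhaustive and that $\tau>0$; and the smallness $\beta\le\delta$ is what makes the cross-diffusion in the second equation a controllable perturbation, both for the uniform bound and for the maximum-principle arguments. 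Throughout, the recurring technical point is upgrading weak or subsequential limits to $C^1$ convergence via $W^{2,p}$ elliptic estimates on $\Phi_n$ and on $(1+\beta u_n)w_n$, which is where $N\le3$ is needed.
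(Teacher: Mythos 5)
The first thing to note is that the paper you were given contains no proof of this statement: it is quoted verbatim (as Theorem~\ref{LNthm}) from Lou and Ni \cite{LN2}, so your proposal can only be compared with the original argument of that paper. Your skeleton is the right one and matches it in broad outline: uniform $L^{\infty}$ bounds taken as input, the two integral identities from integrating \eqref{sSKT} over $\Omega$, the dichotomy on $\|v_{n}\|_{\infty}$, division of the first equation by $\alpha_{n}$ so that $(1/\alpha_{n}+v_{n})u_{n}$ becomes asymptotically Neumann-harmonic and hence converges to a constant $\tau$, and the observation that $(1+\beta u_{n})v_{n}=v_{n}+\beta u_{n}v_{n}\to v+\beta\tau$ so the $\beta$-term disappears under $\Delta$; likewise, dividing the first equation by $d_{1,n}\to\infty$ to force $(1+w)u\equiv\tau$ correctly produces \eqref{LNlim3}.

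The genuine gap is in the regime $v_{n}\to 0$, where you simply assert that ``the first equation keeps $1+\alpha_{n}v_{n}=1+w_{n}$ bounded.'' Nothing gives you this: the a priori estimates bound $u_{n}$ and $v_{n}$ only, and $\alpha_{n}\|v_{n}\|_{\infty}$ may a priori diverge even though $\|v_{n}\|_{\infty}\to 0$. Excluding this intermediate regime is the real content of the hypotheses you relegated to ``book-keeping,'' and it is precisely where $a_{2}/d_{2}\neq\lambda_{j}$ enters. Indeed, if $\xi_{n}:=\|w_{n}\|_{\infty}\to\infty$, set $\hat{w}_{n}=w_{n}/\xi_{n}$; since $\Delta[(1+w_{n})u_{n}]=O(1/d_{1,n})$ in $L^{\infty}$, the function $(1+w_{n})u_{n}/(1+\xi_{n})$ converges to a constant $\tau'\ge 0$. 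If $\tau'=0$, then $u_{n}\to 0$ locally uniformly on $\{\hat{w}>0\}$ and the normalized second equation passes to $d_{2}\Delta\hat{w}+a_{2}\hat{w}=0$ with $\hat{w}\ge 0$, $\|\hat{w}\|_{\infty}=1$, which is impossible upon integration. If $\tau'>0$, boundedness of $u_{n}$ forces $\hat{w}$ to be bounded below by a positive constant, $u_{n}\to\tau'/\hat{w}$, and the normalized second equation becomes $d_{2}\Delta\hat{w}+a_{2}\hat{w}=b_{2}\tau'$ with Neumann conditions; here non-resonance forces $\hat{w}\equiv b_{2}\tau'/a_{2}$ (so $\tau'=a_{2}/b_{2}$ by the normalization), while the first integral identity forces $\tau'=a_{1}/b_{1}$, contradicting $a_{1}/a_{2}\neq b_{1}/b_{2}$. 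Without this step your alternatives (i)--(iii) are not exhaustive. Two secondary misattributions in the same spirit: the total collapse $w_{n}\to 0$ is excluded not by non-resonance but by integrating the limit equation (with $u\equiv a_{1}/b_{1}$ one gets $(a_{2}-b_{2}a_{1}/b_{1})\int_{\Omega}\tilde{w}=0$ and $\tilde{w}\ge 0$, $\not\equiv 0$, forcing $a_{1}/a_{2}=b_{1}/b_{2}$); and in case (i) your inline claim ``$\tau>0$ because $u,v>0$'' is circular --- one must rule out $u_{n}\to 0$ by noting that then $v_{n}\to a_{2}/c_{2}$ (the unique positive Neumann logistic solution) and the first integral identity, divided by $\int_{\Omega}u_{n}$, forces $a_{1}/a_{2}=c_{1}/c_{2}$, which is exactly what that hypothesis forbids.
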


Theorem \ref{LNthm} gives
three shadow systems
\eqref{LNlim1}, \eqref{LNlim2} and \eqref{LNlim3} as $\alpha\to\infty$.
In this paper, we call \eqref{LNlim1}
{\it the 1st shadow system}, \eqref{LNlim2}
{\it the 2nd shadow system}, and \eqref{LNlim3}
{\it the 3rd shadow system}.
It can be ecologically said that
the first shadow system \eqref{LNlim1}
characterizes the segregation in the sense that
$v$ has a positive limit and  $uv\to\tau$ as $\alpha\to +\infty$ with bounded $d_1$  or $d_1\to \infty$,
whereas
the 2nd  system \eqref{LNlim2} characterizes the extinction
of the species of $v$ with the order $1/\alpha$ as $\alpha\to +\infty$ with bounded or fixed $d_1$, and 3rd shadow system \eqref{LNlim2} characterizes the extinction
of the species of $v$ as $\alpha\to +\infty$ and $d_1\to +\infty$.  In  this paper we always denote $A=\frac{a_1}{a_2}$, $B=\frac{b_1}{b_2}$ and $C=\frac{c_1}{c_2}$.

Since the pioneering work of Lou and Ni \cite{LN2}, the  1st shadow system \eqref{LNlim1} has been deeply and widely investigated by some mathematicians (see \cite{KW,LNY1,LNY2,MSY,Ni,Wu,WX,Yo}). In the one dimensional case with $\Omega=(0,1)$, Lou, Ni and Yotsutani \cite{LNY1} obtained  remarkable results in which
almost all solutions of \eqref{LNlim1}
are explicitly expressed by elliptic functions.
In \cite{LNY1}, the diffusion coefficient $d_{2}$ is
employed as a main parameter to analyze the set
of solutions.
As a rough description of results obtained in \cite{LNY1} for the 1st shadow system \eqref{LNlim1},
for the case when  $A>B$,
the existence range of $d_{2}$ for the  positive strictly monotone solutions
is contained in $(0, a_{2}/\pi^2)$.
Further, in \cite{LNY1} they obtained the asymptotic behavior
of the monotone solutions for the 1st shadow system \eqref{LNlim1} as $d_{2}\to 0$ or $d_{2}\to a_{2}/\pi^2$.
In \cite{LNY1} for the case $A>\frac{1}{4}B+\frac{3}{4}C\;\;{\rm if}\; B<C$, and the case $A>\frac{B+C}{2}\;\; {\rm if}\; B>C$; it was proved that for small enough $d_2>0$ there exist  positive spiky steady states  to the 1st shadow system \eqref{LNlim1} with $\Omega=(0,1)$, which are naturally expected to be perturbed to a type of large spiky  steady states of the original SKT  system \eqref{sSKT} with large $\alpha$ and small $d_2$.

In  \cite{WX} for any  $A>\frac{B+3C}{4}$ and small $d_2>0$,  the second author of the present paper and her cooperator obtained the existence and detailed spiky structure of large spiky steady states to the original SKT system \eqref{sSKT} with $\Omega=(0,1)$ when both $d_1$ and $\alpha$ are large enough.  Kolokolnikov and Wei \cite{KW} obtained the existence and the stability/instability of  large spiky steady states in some multi-dimensional case of  some simplified SKT competition model \eqref{SKT} with large cross diffusion.

In \cite{LNY1} it was also proved that for any $A>B$ the 1st shadow system \eqref{LNlim1} with $\Omega=(0,1)$ has a type of  nontrivial positive steady states $(\xi_{d_2}, \psi_{d_2}(x))$ with singular bifurcation structure when $d_2$ is less than and near $a_2/\pi^2$, where $(\xi_{d_2}, \psi_{d_2}(x))\to (0,0)$ and $\frac{\xi_{d_2}}{\psi_{d_2}(x)}\to $  $\frac{a_2}{b_2}\frac{1}{1-\sqrt{1-B/A}\cos(\pi x)}$ as $d_2\to a_2/\pi^2$. Ni, Wu and Xu  \cite{NWX} obtained the detailed structure of  $(\xi_{d_2}, \psi_{d_2}(x))$ as  $d_2\to a_2/\pi^2$ and proved the existence and stability of  steady states $(u(x),v(x))$ perturbed from $(\xi_{d_2}/\psi_{d_2}(x), \psi_{d_2}(x))$ for the original SKT model   \eqref{SKT} with $\Omega=(0,1)$ and $\beta=0$ when both $d_1$  and  $\alpha$ are large enough.  For some multi-dimensional $\Omega$, by applying a different approach from \cite{NWX}, Lou, Ni and Yotsutani \cite{LNY2} proved  the  existence and  stability of several branches of
 positive steady states for the corresponding evolution problem of the 1st shadow system \eqref{LNlim1} and for the original SKT model \eqref{SKT} with large $d_1$ and  $\alpha$ when $d_{2}$ is slightly less than $a_{2}/\lambda_{j}$. Recently, a series of works were obtained by Yotsunani's group
(\cite{ MSY, Yo})
for further results on the 1st shadow system \eqref{LNlim1} with one dimensional $\Omega$.

As far as we know, there are not much work  on the  shadow  system \eqref{LNlim2} or \eqref{LNlim3}, where the $v$ component  of the steady state to the original SKT system \eqref{sSKT}  is assumed to  tend to zero but  $\alpha v(x)$  tends to a positive and bounded function as $\alpha\to +\infty$. The first theoretical work based on the investigation of the 3rd shadow system \eqref{LNlim3} (the limiting system of 2nd shadow system \eqref{LNlim2} as $d_1\to +\infty$) is due to Lou and Ni \cite{LN2}, in which for any $A>B$ and small enough $d_2>0$ and when
$\Omega$ is an interval, they proved the existence of positive spiky steady states
 to the 3rd shadow system \eqref{LNlim3} and the existence of the corresponding spiky steady states  near $(a_1/b_1,0)$ to the original SKT model \eqref{SKT} with $\beta=0$ and small $d_2>0$ when both $d_1$ and $\alpha$ are large enough; such  spiky steady states of  the evolution 3rd shadow system \eqref{LNlim3} and the original SKT model \eqref{SKT} with large $d_1$ and $\alpha$ are proved to be unstable in \cite{WWX}.

However, there was no paper discussing
the 2nd shadow system \eqref{LNlim2} (with bounded $d_1>0$) before \cite{Ku1}.
In \cite{Ku1}, the first author of the present paper
studied the  bifurcation
structure of the set of nontrivial positive steady states
to the 2nd shadow system \eqref{LNlim2}
with $\beta =0$; in which  it was shown that if $d_{2}\lambda_{1}<a_{1}b_{2}/b_{1}$,
then monotone positive solutions bifurcate from a positive constant
solution at $a_{2}=a_{1}b_{2}/b_{1}$ in the multi-dimensional case.
Furthermore, Li and the second author of the present paper \cite{LW1}
 derived more detailed structure and the instability of the local bifurcating steady states
near the bifurcation point.

For the 2nd shadow system \eqref{LNlim2} with  $\beta=0$ and 1-dimensional $\Omega$,
 it was also proved
in \cite{Ku1} that there exists a global branch
of positive steady state $(u(x),w(x))$ for
$d_{2}\pi^{2}<a_{2}<a_{1}b_{2}/b_{1}$
bifurcating from $a_{2}=a_{1}b_{2}/b_{1}$
and  blowing up as $a_{2}$ tends to $d_{2}\lambda_{1}$.
By replacing the bifurcation parameter $a_{2}$ by $d_{2}$,
the global bifurcating  result can be interpreted as that
the $w$ component on the branch blows up
as $d_{2}$ approaches $a_{2}/\lambda_{1}$.
Furthermore, in \cite{Ku1} it was verified that
the asymptotic profiles
of solutions of the two shadow systems
\eqref{LNlim1} and \eqref{LNlim2}
as $d_{2}\to a_{2}/\lambda_{1}$
coincide with each other after normalization
in the case when $\beta =0$ and $\Omega$ is an interval. It can be conjectured that  for the multi-dimensional case the 2nd shadow system \eqref{LNlim2} still admits a positive  solution branch with $w$-component blowing up as $d_2$ tends to  $a_{2}/\lambda_{1}$, and it is naturally guessed   that the original SKT model \eqref{sSKT}
with sufficiently large $\alpha$ and $\beta=0$ admits some  lower  branches of positive solutions perturbed from some
solution branches of the 2nd
shadow system \eqref{LNlim2}, which connect some upper branches  of positive solutions perturbed from the
solution branches of the 1st
shadow system \eqref{LNlim1} as $d_2\uparrow a_{2}/\lambda_{1}$.

Recently Li and the second author of present paper \cite{LW2}(for the case $\beta=0$) and Tang \cite{Tang} (for the case $\beta>0$) proved the existence and instability of several branches of  positive steady states   bifurcating from infinity to the 3rd shadow system \eqref{LNlim3}  when $d_2$ is near $a_2/\lambda_1$ and $\Omega$ can be multi-dimensional,  by applying special transformation and detailed  spectral analysis  combined with Lyapunov-Schmidt decomposition technique. The results obtained in \cite{LW2} and \cite{Tang} also verify the above  conjecture about the existence of some lower branches of positive steady states to the original SKT model when both $d_1$ and $\alpha$ are large enough and $d_2$ is less than and near $a_2/\lambda_1$.  However the transformation and decomposition techniques  applied in \cite{LW2} and \cite{Tang} cannot be applied  to the 2nd shadow system (when $d_1$ is bounded and fixed) directly, and  the asymptotic structures of  the branches of blow-up steady states to the 2nd shadow system seem to be more  complicated than those of the 3rd shadow system, which will be stated and remarked in  the following section.

The asymptotic behavior of positive steady states  to the  SKT system \eqref{sSKT}
 when one or two cross diffusion parameters tend to infinity have been pursued in various settings
by several research groups in recent years. In \cite{KY1, KY2}, the first author of the present paper and Yamada investigated the asymptotic behavior of nontrivial positive steady states to the  SKT system \eqref{sSKT} as $\alpha \to \infty$ with  \(d_{1}\) kept bounded under Dirichlet boundary conditions, and proved that the 2nd shadow system \eqref{LNlim2} with Dirichlet boundary condition is  the unique limiting system, the global bifurcation structures of the positive steady states to the related limiting system were also established in \cite{KY1, KY2}.

The first author of the present paper and his group also studied the asymptotic behavior of  nontrivial positive steady states of \eqref{sSKT} when both  $\alpha$ and $\beta$ tend to infinity with the same rate (the so-called full cross-diffusion limit).  Under the Neumann boundary conditions as in \eqref{sSKT},
the existence of  a  unique limiting system for positive steady states was established in \cite{Ku2},
and the global bifurcation structure of positive steady states  in one dimensional case was obtained in \cite{Ku3}.
On the other hand, for the stationary SKT problem \eqref{sSKT} with  Dirichlet boundary conditions, two distinct limiting systems arise under the full cross-diffusion limit, and their global bifurcation structures
were investigated in \cite{IKS}.
Furthermore, the stability of the global bifurcation branches obtained therein
was analyzed in \cite{KS}.

It is worth mentioning that due to some essential or technical  difficulties  in proving the  uniform boundedness of some   norms of  the classical solutions with time, such as $\|u_t(t,\cdot)\|_{L_\infty(\Omega)}$ or $\|\alpha v(t,\cdot)\|_{L_\infty(\Omega)}$, to the evolution SKT model \eqref{SKT} with large enough $\alpha$, even in  finite time interval $[0,T]$ and in  one dimensional $\Omega$; or in proving the  global existence of solution in time for any given nonnegative initial data in higher dimensional $\Omega$ or when both $\alpha$ and $\beta$ are positive; it seems that the investigation on all the possible  limiting systems or the asymptotic behavior of all the positive solutions  with time to the  evolution SKT model \eqref{SKT} under large  cross-diffusion limit (as $\alpha\to +\infty$) are more difficult or more complicated than those of the  stationary SKT problems. However under some additional assumptions on the uniform boundedness  or  special types of boundedness of solutions with time, several types of  evolution shadow  systems of SKT model \eqref{SKT} under large cross diffusion limit can be similarly derived whose steady states satisfy   the related stationary shadow systems (see also Kan-on~\cite{Ka3} for the related argument).  The investigation on the spectral  stability/instability  on the steady states to the corresponding evolution shadow system usually play key roles in proving  the spectral stability/instability of the perturbed steady states for the original evolution SKT  system \eqref{SKT} with large cross diffusion.  The existence of  new types of   stable  nontrivial positive steady states to the original SKT models with large cross diffusion also indicates the appearance of new types of pattern formation induced by large cross diffusion, in this sense the stability analysis of  nontrivial steady states to the  SKT model is also  interesting or more important   in both math and application.

In this paper we shall apply a different approach from \cite{Ku1,LW2,Tang} to investigate  the existence and the detailed asymptotic behavior  of  several  branches of  positive steady states bifurcating from infinity  to the  more general 2nd shadow system \eqref{LNlim2} with bounded $\beta\ge 0$ and fixed $d_1>0$ when  $d_{2}$ is near $a_{2}/\lambda_{j}$, where $\Omega$ can be  multi-dimensional, our new results  also verify the above mentioned  conjectures when $d_{2}$ is close to $a_{2}/\lambda_{j}$. In this paper we also study  the spectral stability/instability  of large steady states  to the evolution 2nd shadow system \eqref{LNlim2} with $\beta\ge 0$  and $d_1>0$ when $d_{2}$ is near $a_{2}/\lambda_{j}$. Furthermore we also investigate  the existence and the stability  of the perturbed steady states to the original SKT system \eqref{SKT} with bounded $\beta\ge 0$  and fixed $d_1>0$ when $d_{2}$ is near $a_{2}/\lambda_{j}$ and $\alpha$ is large enough. The results obtained in this paper also cover the case when  $d_1$ is large enough.

\section{Formulation of the problem and statement of main results}

For convenience of later investigation on the detailed asymptotic behavior and stability/instability of the blowing-up steady states to the 2nd
shadow system or to the original system \eqref{SKT} when $d_{2}$ is close to $a_{2}/\lambda_{j}$, we first investigate  the original evolution system \eqref{SKT} and the corresponding evolution system of the 2nd shadow system after a series of  transformations.

  Let $(u_n(x,t),v_n(x,t))$ be a sequence of positive solutions of the original SKT evolution system \eqref{SKT} with cross diffusion parameters $\alpha=\alpha_n$ and fixed $\beta\ge 0$ and  smooth nonnegative initial data $(u_0(x),v_{n0}(x))$, after the transformation $w=\alpha_n v$, then $(u_n(x,t),w_n(x,t))$ satisfy   the following initial boundary value problem:
\begin{equation}\label{2.2}
\begin{cases}
u_{nt}=d_{1}\Delta[\,(1+w_n)u_n\,]+u_n(a_{1}-b_{1}u_n-c_{1}\frac{w_n}{\alpha_n})\ \ &\mbox{in}\
\Omega\times (0,T),\\
w_{nt}=d_{2}\Delta[\,(1+\beta u_n)w_n\,]+w_n(a_{2}-b_{2}u_n-c_{2}\frac{w_n}{\alpha_n})\ \ &\mbox{in}\
\Omega\times (0,T),\\
\frac{\partial u_n}{\partial \nu}=\frac{\partial w_n}{\partial \nu}=0\ \ &\mbox{on}\
\partial\Omega\times (0,T),\\
u_n(x,0)=u_0(x)\ge 0,\;w_n(x,0)=\alpha_n v_{n0}(x)\ge 0 &\mbox{in}\
\Omega.\\
\end{cases}
\end{equation}
Assume that $\alpha_n\to +\infty$ and $\alpha_n v_{n0}(x)\to w_0(x)\ge 0$ in $W^{1,p}(\Omega)$ ($p>N$) as $n\to +\infty$ and    $\|w_n(\cdot,t)\|_{W^{1,p}(\Omega)}$ and  $\|u_n(\cdot,t)\|_{W^{1,p}(\Omega)}$ are  bounded as $\alpha_n \rightarrow\infty$ for the bounded $\beta\geq 0$ and any $t\in [0,T)$,
then it can be proved that $(u_n(x,t),w_n(x,t))\to (u(x,t),w(x,t))$ in some classical sense for $t\in [0,T)$, where  $(u,w)$ is the unique classical nonnegative solution to   the following limiting  system of  (\ref{2.2}) as $\alpha\to +\infty$:
\begin{equation}\label{2.2*}
\begin{cases}
u_{t}=d_{1}\Delta[\,(1+w)u\,]+u(a_{1}-b_{1}u)\ \ &\mbox{in}\
\Omega\times (0,T),\\
w_{t}=d_{2}\Delta[\,(1+\beta u)w\,]+w(a_{2}-b_{2}u)\ \ &\mbox{in}\
\Omega\times (0,T),\\
\frac{\partial u}{\partial \nu}=\frac{\partial w}{\partial \nu}=0\
\ \ &\mbox{on}\
\partial\Omega\times (0,T),\\
u(x,0)=u_0(x)\ge 0,\;w(x,0)=w_0(x)\ge 0&\mbox{in}\
\Omega.\\
\end{cases}
\end{equation}
In this paper, we let $\{\lambda_j\}_{j=0}^\infty$ denote the eigenvalues of $-\Delta$
with homogeneous Neumann boundary condition on $\partial\Omega$, ordered increasingly:
\[
0=\lambda_0 < \lambda_1 < \lambda_2 < \cdots.\]

Assume that $a_{2}\lambda_{j}^{-1}-d_{2}>0$ is sufficiently small and denote
$\varepsilon = a_{2}\lambda_{j}^{-1}-d_{2}>0$ for some simple eigenvalue $\lambda_{j}$ ($j\geq 1$).
We intend to prove that, under this assumption the shadow system \eqref{2.2*} admits a branch of blow-up steady states
$(u(x,d_{2}),w(x,d_{2}))$ for $N\geq 1$, $\beta\geq 0$, with
\[
\|w(x,d_{2})\|_{L^{\infty}(\Omega)} \to +\infty,
\qquad
\|u(x,d_{2})\|_{L^{\infty}(\Omega)} \ \text{bounded,\; as}\ d_{2}\uparrow a_{2}/\lambda_j.
\]

Denote $\varepsilon=a_{2}\lambda^{-1}_{j}-d_{2}$,
after the transformation
\begin{equation}\label{change}
\tilde{w}=\varepsilon w,~~\phi=\varepsilon(1+w)u=(\varepsilon+\tilde{w})u,~
\psi=(1+\beta u)\tilde{w}
\end{equation}
for $ u,w>0$,
then the evolutional system \eqref{2.2*} becomes
\begin{equation}\label{2.4}
\begin{cases}
\varepsilon u_{t}=
d_{1}\Delta[(\varepsilon+\tilde{w})u]+\varepsilon u(a_{1}-b_{1}u) \ \ &\mbox{in}\
\Omega\times (0,T),\\
\tilde{w}_{t}=d_{2}\Delta[(1+\beta u)\tilde{w}]+
\tilde{w}(a_{2}-b_{2}u)\ \ &\mbox{in}\
\Omega\times (0,T),\\
\frac{\partial u}{\partial \nu}=\frac{\partial \tilde{w}}{\partial \nu}=0\
\ \ &\mbox{on}\
\partial\Omega\times (0,T),\\
\end{cases}
\end{equation}
Using
$$
u=\frac{\phi}{\varepsilon+\tilde{w}},~~
\tilde{w}=\frac{\psi}{1+\beta u}
=\frac{(\varepsilon+\tilde{w})\psi}{\beta\phi+\varepsilon+\tilde{w}}
$$
and $\psi>\beta\phi$ for $\beta>0$ and $u,\tilde{w}>0$,
then for any $\beta\ge 0$
\begin{equation}\label{2.5}
\begin{cases}
u=h_{1\varepsilon}(\phi,\psi):=
\frac{2\phi}{\psi-\beta\phi+\varepsilon
+\sqrt{(\psi-\beta\phi+\varepsilon)^{2}+4\varepsilon\beta\phi}},\\[2mm]
\tilde{w}=h_{2\varepsilon}(\phi,\psi)
:=\frac{\psi-\beta\phi-\varepsilon
+\sqrt{(\psi-\beta\phi+\varepsilon)^{2}+4\varepsilon\beta\phi}}{2}.\\
\end{cases}
\end{equation}
Thus the system (\ref{2.4}) can be written as the evolutional system of $(\phi,\psi)$:
\begin{equation}\label{2.6}
\begin{cases}
\varepsilon (h_{1\varepsilon}(\phi,\psi))_{t}=
d_1\Delta\phi+\varepsilon h_{1\varepsilon}(a_{1}-b_{1}h_{1\varepsilon})\ \ &\mbox{in}\
\Omega\times (0,T),\\
(h_{2\varepsilon}(\phi,\psi))_{t}=d_{2}\Delta\psi+
h_{2\varepsilon}(a_{2}-b_{2}h_{1\varepsilon})\ \ &\mbox{in}\
\Omega\times (0,T),\\
\frac{\partial \phi}{\partial \nu}=\frac{\partial \psi}{\partial \nu}=0\
\ \ &\mbox{on}\
\partial\Omega\times (0,T).\\
\end{cases}
\end{equation}
Using $\varepsilon=\frac{a_{2}}{\lambda_j}-d_{2}$,
$\hat{\varepsilon}=\frac{1}{d_{2}}-\frac{\lambda_{j}}{a_{2}}\sim
\frac{\lambda^{2}_{j}}{a^{2}_{2}}\varepsilon$ as $\varepsilon\downarrow0$;
and note that
\begin{equation}\label{2.8}
\begin{cases}
h_{1\varepsilon}(\phi,\psi)=h_{10}(\phi,\psi)
+\varepsilon h^*_1(\phi,\psi)+O(\varepsilon^{2}),\\
h_{2\varepsilon}(\phi,\psi)=
h_{20}(\phi,\psi)
+\varepsilon h^*_{2}(\phi,\psi)+O(\varepsilon^{2}),
\end{cases}
\end{equation}
with
\begin{equation}\label{h}
\begin{cases}
h_{10}(\phi,\psi)=\frac{\phi}{\psi-\beta\phi},~~~~~~
h^*_1(\phi,\psi)=\frac{\partial h_{1\varepsilon}}{\partial \varepsilon}
(\phi,\psi)|_{\varepsilon=0}=-\frac{\phi}{(\psi-\beta\phi)^{2}}
[1+\frac{\beta\phi}{\psi-\beta\phi}],\\
h_{20}(\phi,\psi)=\psi-\beta\phi,~~~
h^*_{2}(\phi,\psi)=\frac{\partial h_{2\varepsilon}}{\partial \varepsilon}
(\phi,\psi)|_{\varepsilon=0}=
\frac{\beta\phi}{\psi-\beta\phi},
\end{cases}
\end{equation}
then the boundary value problem of  (\ref{2.6}) can be rewritten as
\begin{equation}\label{1}
\begin{cases}
\frac{\varepsilon}{d_1} (h_{1\varepsilon}(\phi,\psi))_{t}=\Delta\phi+\varepsilon f_{1}(\phi,\psi,\varepsilon)\ \
&\mbox{in}\;
\Omega\times (0,T),\\[2mm]
\frac{1}{d_2} (h_{2\varepsilon}(\phi,\psi))_{t}=\Delta\psi+\lambda_{j}\psi
-\left(\beta +\frac{b_2}{a_2}\right)\lambda_j\phi
+\varepsilon f_{2}(\phi,\psi,\varepsilon)\ \ &\mbox{in}\;
\Omega\times (0,T),\\[2mm]
\frac{\partial \phi}{\partial \nu}=\frac{\partial \psi}{\partial \nu}=0 \ \ &\mbox{on}\
\partial\Omega\times (0,T),\\
\end{cases}
\end{equation}
with
\begin{equation}\label{3}
\begin{cases}
f_{1}(\phi,\psi,\varepsilon)=\frac{h_{1\varepsilon}}{d_1}(a_{1}-b_{1}h_{1\varepsilon}),\\
f_{2}(\phi,\psi,\varepsilon)=\frac{1}{\varepsilon}\left[\frac{h_{2\varepsilon}}{d_2}
(a_{2}-b_{2}h_{1\varepsilon})-\frac{h_{20}\lambda_j}{a_2}(a_{2}-b_{2}h_{10})\right],
\end{cases}
\end{equation}
and it is easy to check that as $\varepsilon\to 0$,
\begin{equation}\label{4}
\begin{array}{l}f_{1}(\phi,\psi,\varepsilon)\rightarrow
f^0_{1}(\phi,\psi):=\frac{h_{10}}{d_1}(a_{1}-b_{1}h_{10})
=\frac{\phi}{d_{1}(\psi-\beta\phi)}
\left(a_{1}-b_{1}\frac{\phi}{\psi-\beta\phi}\right),\\[2mm]
f_{2}(\phi,\psi,\varepsilon)\rightarrow f^0_{2}(\phi,\psi)
=\left(\frac{\lambda^{2}_{j}}{a^{2}_{2}}h_{20}+\frac{\lambda_{j}}{{a_{2}}}h^*_{2}\right)
(a_{2}-b_{2}h_{10})-\frac{\lambda_{j}}{a_{2}}b_{2}h_{20}h^*_1\\[2mm]
=
\left(\frac{\lambda^{2}_{j}}{a^{2}_{2}}(\psi-\beta\phi)
+\frac{\lambda_{j}\beta\phi}{a_{2}(\psi-\beta\phi)}\right)
\left(a_{2}-b_{2}\frac{\phi}{\psi-\beta\phi}\right)
+\frac{\lambda_{j}b_{2}}{a_{2}}\frac{\phi}{\psi-\beta\phi}
\left(1+\frac{\beta\phi}{\psi-\beta\phi}\right)\\[2mm]
=
\frac{\lambda^{2}_{j}}{a_{2}}(\psi-\beta\phi)
-\frac{\lambda^{2}_{j}b_{2}\phi}{a^{2}_{2}}
+\left(\beta+\frac{b_{2}}{a_{2}}\right)\lambda_{j}\frac{\phi}{\psi-\beta\phi}.
\end{array}
\end{equation}
For the fixed $j\ge 1$, consider the system of positive steady states of (\ref{1})
\begin{equation}\label{2}
\begin{cases}
\Delta\phi+\varepsilon f_{1}(\phi,\psi,\varepsilon)=0 \ \ &\mbox{in}\;
\Omega,\\
\Delta\psi+\lambda_{j}\psi
-\left(\beta +\frac{b_2}{a_2}\right)\lambda_j\phi +\varepsilon f_{2}(\phi,\psi,\varepsilon)=0\ \ &\mbox{in}\;
\Omega,\\
\frac{\partial\phi}{\partial\nu}=\frac{\partial\psi}{\partial\nu}=0\ \ &\mbox{on}\;
\partial\Omega ;
\end{cases}
\end{equation}
or
\begin{equation}\label{L3}
    \mathcal{L}\biggl[\begin{array}{c}
    \phi\\
    \psi\end{array}\biggr]+\varepsilon\biggl[\begin{array}{l}
    f_{1}(\phi,\psi,\varepsilon )\\
    f_{2}(\phi,\psi, \varepsilon )
    \end{array}\biggr]=
    \biggl[\begin{array}{c}
    0\\
    0\end{array}\biggr],
\end{equation}
with
\begin{equation*}
    \mathcal{L}=
\biggl[\begin{array}{cc}
\Delta & 0\\
-\left(\beta+\frac{b_{2}}{a_{2}}\right)\lambda_{j} & \Delta +\lambda_{j}
\end{array}\biggr].
\end{equation*}
The kernel of $\mathcal{L}$
consists of  the following  two-dimensional space:
\begin{equation}\label{2.9}
\mbox{Ker}(\mathcal{L})=\mbox{Span}\,\biggl\{\,
\boldsymbol{e}_{1}:=
\biggl(1, \beta+\frac{b_{2}}{a_{2}}\biggr),\quad
\boldsymbol{e}_{2}:=(0,1)\varphi_{j}\,\biggr\};
\end{equation}
where $\varphi_{j}(x)$ is an eigenfunction satisfying
\begin{equation}\label{egf}
-\Delta\varphi_{j}=\lambda_{j}\varphi_{j}\ \ \mbox{in}\ \Omega,
\quad \frac{\partial\varphi_{j}}{\partial \nu}=0\ \ \mbox{on}\ \partial\Omega,
\quad \|\varphi_j\|_{L^2(\Omega)}=1.
\end{equation}
Under the assumption that $N\le 4$ and $a_{1}/a_{2}> b_{1}/b_{2}$ and
$\lambda_{j}>0$ is a simple eigenvalue, by applying
the Lyapunov-Schmidt reduction method, our proof will find
 the existence of
$(s_{j,0}^{+}, \mu_{j,0}^{+})$
and
$(s_{j,0}^{-}, \mu_{j,0}^{-})$
such that $s^{\pm}_{j,0}>0$, $\mu^{-}_{j,0}<0<\mu^{+}_{j,0}$ and the solution $(\phi(x), \psi(x))$ of
the limiting system of \eqref{2} at $\varepsilon\to 0$ satisfying
$$
(\phi(x), \psi(x))-
s^{\pm}_{j,0}
\biggl(1, \beta+\frac{b_{2}}{a_2}(1+\mu^{\pm}_{j,0}\varphi_{j}(x))\biggr)\in {\rm Range}(\mathcal{L}).
$$

Next, for each $j$ and sign of $\pm$,
we use the Implicit Function Theorem to construct
a curve $(\phi_{\varepsilon }, \psi_{\varepsilon })$
to the  system \eqref{2}
with small enough $\varepsilon >0$.
It should be noted that
the profile of
$$
h_{j}(\mu )=\dfrac{\int_{\Omega}(1+\mu\varphi_{j}(x))^{-2}{\rm d}x}
{\int_{\Omega}(1+\mu\varphi_{j}(x))^{-1}{\rm d}x}$$
obtained by \cite{LNY2} plays an important role
in solving the limiting system of \eqref{2} in higher dimensional case ($N>1$) at $\varepsilon=0 $
and
checking a non-degenerate condition
for use of the Implicit Function Theorem.
By the transformation \eqref{change},
this curve realizes the bifurcation branch of
\eqref{LNlim2} on which the
$w$ component blows up
with the rate $O(1/\varepsilon )$ as
$\varepsilon:=a_{2}\lambda_{j}^{-1}-d_{2}>0$ tends to zero.

The contents of this paper are as follows:
In Section 2, we state the  main existence and stability results  of this paper.
In Section 3, we mainly discuss the construction of nonconstant
solutions of \eqref{2}.
In Section 4, we show the instability of the solution branches obtained in
the previous section. In Section 5, we investigate  the existence and stability of the
nonconstant steady states of the original SKT model \eqref{SKT} with large enough $\alpha$.

The following Theorems \ref{LSprop} and \ref{blthm} are concerned with the existence and asymptotic
behavior of the solution branches $(\phi^{\pm}_{j,\varepsilon}(x), \psi^{\pm}_{j,\varepsilon}(x))$ of \eqref{2} and $\varGamma_{j,\pm}$
of the shadow system \eqref{LNlim2} near the blow-up point
$d_{2}=a_{2}/\lambda_{j}$.
Theorem \ref{blthm} shows that each branch of $\varGamma_{j,\pm}$
can be parameterized by $\varepsilon :=a_{2}\lambda_{j}^{-1}-d_{2}>0$,
and then, the $w$ component of each branch blows up
with the rate $O(\varepsilon^{-1})$ as $\varepsilon\to 0$.
\begin{thm}\label{LSprop}
Assume that $N\le 4$, $a_{1}/a_{2}>b_{1}/b_{2}$ and $\lambda_{j}$ is
simple. Denote $\varepsilon =a_{2}\lambda_{j}^{-1}-d_{2}$.
Then for any given $d_1>0$, $\beta\ge 0$ and each sign of $\pm$,
there exists $\overline{\varepsilon}>0$
such that, for any $\varepsilon\in [0, \overline{\varepsilon}]$, the system \eqref{2}
has a positive solution
$(\phi^{\pm}_{j,\varepsilon}(x), \psi^{\pm}_{j,\varepsilon}(x))$
satisfying
\begin{equation}\label{ppe}
    \biggl[
    \begin{array}{c}
    \phi^\pm_{j,\varepsilon}(x)\\
    \psi^\pm_{j,\varepsilon}(x)
    \end{array}
    \biggr]
    =s^{\pm}_{j}(\varepsilon )\biggl(
    \biggl[
    \begin{array}{c}
    1\\
    \beta +\frac{b_2}{a_2}
    \end{array}
    \biggr]
    +\mu^{\pm}_{j}(\varepsilon )
    \biggl[
    \begin{array}{c}
    0\\
    \frac{b_2}{a_2}\varphi_{j}(x)
    \end{array}
    \biggr]\biggr)
    +\varepsilon\biggl[
    \begin{array}{c}
    \widetilde{\phi}^{\pm}_{j}(x,\varepsilon )\\
    \widetilde{\psi}^{\pm}_{j}(x,\varepsilon )
    \end{array}
    \biggr],
\end{equation}
where $\boldsymbol{\varPhi}^{*\pm}_{j}(x,\varepsilon ):=
(\widetilde{\phi}^{\pm}_{j}(x,\varepsilon ), \widetilde{\psi}^{\pm}_{j}(x,\varepsilon ))\in {\rm Range}(\mathcal{L})$
and $(s^{\pm}_{j}(\varepsilon ), \mu^{\pm}_{j}(\varepsilon ))\in \mathbb{R}^{2}$ are functions of $C^{1}$-class
for $\varepsilon\in [0,\overline{\varepsilon}]$
satisfying
\begin{equation}
        (s^{\pm}_{j}(0),\mu^{\pm}_{j}(0))= (s_{j,0}^{\pm }, \mu^{\pm}_{j,0})
        \ \ \mbox{and}\ \
        \boldsymbol{\varPhi}^{*\pm}_{j}(x,0)= -\mathcal{L}_{X_0}^{-1}(I-P)F_{0}(s_{0},t_{0}),
\nonumber
\end{equation}
where $\mu^{\pm}_{j,0}$ is obtained in Proposition \ref{prop1}, $s^{\pm}_{j,0}$ is defined by \eqref{sexp},
and $\boldsymbol{\varPhi}^{*\pm}_{j}(x,0)$ is defined in Section 3.
\end{thm}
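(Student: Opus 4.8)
The plan is to solve \eqref{L3} by Lyapunov--Schmidt reduction around the two-dimensional kernel \eqref{2.9} of $\mathcal{L}$. First I would fix the functional framework $X=\{U\in (W^{2,p}(\Omega))^2:\partial_\nu U=0\}$ and $Y=(L^p(\Omega))^2$ with $p>N$; the hypothesis $N\le 4$ secures the Sobolev embeddings under which the Nemytskii operators built from $f_1,f_2$ in \eqref{3} are of class $C^1$ on the open set $\{\psi-\beta\phi>0\}$, where the $h_{i\varepsilon}$ are smooth. Since $\mathcal{L}$ is a lower-triangular elliptic operator, it is Fredholm of index $0$, and I would compute its cokernel by solving $\mathcal{L}^{*}U^{*}=0$; a short calculation gives $\mathrm{Ker}(\mathcal{L}^{*})=\mathrm{Span}\{(1,0),\,(-(\beta+\tfrac{b_2}{a_2})\varphi_j,\varphi_j)\}$, so that $(g_1,g_2)\in\mathrm{Range}(\mathcal{L})$ iff $\int_\Omega g_1=0$ and $\int_\Omega\varphi_j\big(g_2-(\beta+\tfrac{b_2}{a_2})g_1\big)=0$. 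Let $P$ be a continuous projection onto the cokernel and split $X=\mathrm{Ker}(\mathcal{L})\oplus X_0$, so that $\mathcal{L}_{X_0}:X_0\to\mathrm{Range}(\mathcal{L})$ is an isomorphism. I parametrize the kernel part as in \eqref{ppe} by $\xi(s,\mu)=s\big(\boldsymbol e_1+\mu\tfrac{b_2}{a_2}(0,\varphi_j)\big)$ and seek a solution $U=\xi(s,\mu)+\varepsilon\boldsymbol\varPhi^{*}$ with $\boldsymbol\varPhi^{*}\in X_0$.

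The auxiliary equation is $(I-P)$ applied to \eqref{L3}: dividing by $\varepsilon$ and using $\mathcal{L}\xi=0$, it reads $\boldsymbol\varPhi^{*}+\mathcal{L}_{X_0}^{-1}(I-P)F\big(\xi(s,\mu)+\varepsilon\boldsymbol\varPhi^{*},\varepsilon\big)=0$. The point of the rescaling $U-\xi=\varepsilon\boldsymbol\varPhi^{*}$ is that the linearization of this map in $\boldsymbol\varPhi^{*}$ at $\varepsilon=0$ is the identity, so the Implicit Function Theorem yields a $C^{1}$ solution $\boldsymbol\varPhi^{*}=\boldsymbol\varPhi^{*}(s,\mu,\varepsilon)$ for small $\varepsilon$, with $\boldsymbol\varPhi^{*}(s,\mu,0)=-\mathcal{L}_{X_0}^{-1}(I-P)F_{0}(\xi(s,\mu))$, which at $(s_{j,0}^\pm,\mu_{j,0}^\pm)$ is exactly the asserted value of $\boldsymbol\varPhi^{*\pm}_j(x,0)$. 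Throughout I keep $(s,\mu)$ in a region where $\psi-\beta\phi\approx s\tfrac{b_2}{a_2}(1+\mu\varphi_j)>0$, so that $F$ stays in its domain of smoothness.

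Inserting $\boldsymbol\varPhi^{*}$ into the $P$-component gives the two scalar bifurcation equations $PF(\xi+\varepsilon\boldsymbol\varPhi^{*},\varepsilon)=0$ in the unknowns $(s,\mu)$. At $\varepsilon=0$, testing $F_0$ against the two cokernel elements and using $h_{10}=\tfrac{a_2}{b_2}(1+\mu\varphi_j)^{-1}$ from \eqref{4}, the first equation (pairing with $(1,0)$, hence independent of $s$) reduces, after cancelling nonzero constants, to
\[
h_j(\mu)=\frac{\int_\Omega(1+\mu\varphi_j)^{-2}}{\int_\Omega(1+\mu\varphi_j)^{-1}}=\frac{A}{B},
\]
while the second is affine in $s$, of the form $\alpha(\mu)s+\gamma(\mu)=0$ with $\alpha(\mu)=\tfrac{\lambda_j^2 b_2}{a_2^2}\mu$. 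Because $h_j(0)=1$, $h_j(\mu)>1$ for $\mu\ne0$ (by Jensen and Cauchy--Schwarz, since $\int_\Omega\varphi_j=0$), and $h_j\to\infty$ as $1+\mu\varphi_j$ touches zero, the monotonicity profile of $h_j$ established in \cite{LNY2} together with the assumption $A>B$ supplies exactly one root $\mu_{j,0}^{+}>0$ and one root $\mu_{j,0}^{-}<0$ (Proposition \ref{prop1}); solving the affine second equation then fixes $s_{j,0}^{\pm}$ as in \eqref{sexp}, and one checks $s_{j,0}^{\pm}>0$ from the sign of $-\gamma/\alpha$.

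Finally, to continue the roots to $\varepsilon>0$ I would verify the non-degeneracy of the limiting reduced map. Its Jacobian in $(s,\mu)$ is lower triangular because the first equation is independent of $s$; hence its determinant equals $-\alpha(\mu_{j,0}^{\pm})\,h_j'(\mu_{j,0}^{\pm})$ up to a nonzero factor, which is nonzero precisely because $\mu_{j,0}^{\pm}\ne0$ and $h_j'(\mu_{j,0}^{\pm})\ne0$ by the strict monotonicity of $h_j$. The IFT then applies to the bifurcation system and produces $C^{1}$ curves $(s_j^{\pm}(\varepsilon),\mu_j^{\pm}(\varepsilon))$ with the prescribed values at $\varepsilon=0$; substituting back gives the $C^{1}$ branch \eqref{ppe}. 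Positivity $\phi_{j,\varepsilon}^{\pm},\psi_{j,\varepsilon}^{\pm}>0$ follows for small $\varepsilon$ since at leading order $\phi\approx s_{j,0}^{\pm}>0$ and $\psi-\beta\phi\approx s_{j,0}^{\pm}\tfrac{b_2}{a_2}(1+\mu_{j,0}^{\pm}\varphi_j)>0$, the $O(\varepsilon)$ correction being uniformly bounded. The main obstacle is the analysis at $\varepsilon=0$: recognizing that the first reduced equation collapses to $h_j(\mu)=A/B$ and that non-degeneracy hinges on $h_j'\ne0$, for which the detailed profile of $h_j$ from \cite{LNY2} is indispensable.
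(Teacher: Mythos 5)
Your proposal is correct and is essentially the paper's own argument: the same rescaled decomposition $U=\xi(s,\mu)+\varepsilon\boldsymbol{\varPhi}^{*}$ with the kernel parameterized through $t=\tfrac{b_2}{a_2}s\mu$, the same projections built from $\mathrm{Ker}(\mathcal{L}^{*})$, the same limiting reduced equations ($h_j(\mu)=A/B$ together with an equation affine in $s$), and the same key input, the profile of $h_j$ from \cite{LNY2} (Lemma \ref{stlem}), to produce $\mu^{\pm}_{j,0}$; whether one solves the range equation first (your two-step reduction) or applies the Implicit Function Theorem once to the full system \eqref{G=0} as in Lemma \ref{nondeglem} is immaterial, since the block structure of the Fr\'echet derivative makes the two invertibility conditions coincide. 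The one place you genuinely diverge is the non-degeneracy check: the paper differentiates $g_1^0$ under the integral sign and invokes Proposition \ref{prop2}(iii) to obtain the explicit positive lower bound \eqref{DET}, whereas you observe that, writing $I_k(\mu)=\int_\Omega(1+\mu\varphi_j)^{-k}\dx$ so that $g_1^0\propto I_1-\tfrac{B}{A}I_2$, the identity $I_2=\tfrac{A}{B}I_1$ at a root gives $(g_1^0)_\mu(\mu^{\pm}_{j,0})=-c\,I_1(\mu^{\pm}_{j,0})\,h_j'(\mu^{\pm}_{j,0})$ with $c>0$, so nonvanishing of the determinant $-(g_1^0)_\mu(g_2^0)_s$ follows at once from $\mu h_j'(\mu)>0$ in Lemma \ref{stlem} and $(g_2^0)_s\propto\mu^{\pm}_{j,0}\neq 0$; this is a cleaner route that makes Proposition \ref{prop2}(iii) unnecessary for this step. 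One detail you should not wave away: the positivity $s^{\pm}_{j,0}>0$ is not visible from ``the sign of $-\gamma/\alpha$'' alone --- in \eqref{sexp} it requires exactly the two inequalities $|\Omega|<\int_\Omega(1+\mu^{\pm}_{j,0}\varphi_j)^{-1}\dx\le\tfrac{A}{B}|\Omega|$ of Proposition \ref{prop2}(i),(ii), one for each group of terms, so this verification must be carried out, not merely asserted.
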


\begin{thm}\label{blthm}
Suppose that $N\le 4$, $a_{1}/a_{2}>b_{1}/b_{2}$ and
$\lambda_{j}>0$ is simple.
For any such $j$ and each sign of $\pm$,
there exists a small positive number $\overline{\varepsilon}_{j,\pm}>0$
such that the 2nd shadow system \eqref{LNlim2} with $\beta \ge 0$, $d_1>0$ and $d_2=a_{2}\lambda_{j}^{-1}-\varepsilon>0$
admits a curve
\begin{equation}\label{vG}
\varGamma_{j,\pm}:=\biggl\{
(u,w,d_{2})=\biggl(u^\pm_{j,\varepsilon}(x), w^\pm_{j,\varepsilon}(x),
\dfrac{a_{2}}{\lambda_{j}}-\varepsilon \biggr)\,:\,
\varepsilon\in (0,\overline{\varepsilon}_{j,\pm})\biggr\}
\end{equation}
of nonconstant positive solutions.
Furthermore, $\varGamma_{j,\pm}$ forms a unbounded curve
parameterized by $\varepsilon\in (0,\overline{\varepsilon}_{j,\pm}]$, which is continuously differentiable in $\varepsilon$ and
has the following asymptotic behavior
\begin{equation}\label{asmint}
\begin{cases}
u^\pm_{j,\varepsilon}(x)\to u^\pm_{j,0}(x)=\frac{a_2}{b_2(1+\mu^\pm_{j,0}\varphi_{j}(x))}>0,
& {\rm as}\;\varepsilon \downarrow 0,\\
\varepsilon
 w^\pm_{j, \varepsilon}(x)\to   \frac{b_2}{a_2}s^\pm_{j,0}(1+\mu^\pm_{j,0}\varphi_{j}(x))>0,
&{\rm as}\;\varepsilon \downarrow 0;
\end{cases}
\end{equation}
where $\mu^\pm_{j,0}$ is defined in  Proposition \ref{prop1} and $s^\pm_{j,0}>0$ is defined by \eqref{sexp}.
\end{thm}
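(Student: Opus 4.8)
The plan is to obtain $\varGamma_{j,\pm}$ as the image of the branch $(\phi^{\pm}_{j,\varepsilon},\psi^{\pm}_{j,\varepsilon})$ furnished by Theorem \ref{LSprop} under the inverse of the change of variables \eqref{change}, whose explicit form is recorded in \eqref{2.5}. Concretely, for each $\varepsilon\in(0,\overline{\varepsilon}_{j,\pm}]$ I would set
\[
u^{\pm}_{j,\varepsilon}(x):=h_{1\varepsilon}\bigl(\phi^{\pm}_{j,\varepsilon}(x),\psi^{\pm}_{j,\varepsilon}(x)\bigr),
\qquad
w^{\pm}_{j,\varepsilon}(x):=\tfrac{1}{\varepsilon}\,h_{2\varepsilon}\bigl(\phi^{\pm}_{j,\varepsilon}(x),\psi^{\pm}_{j,\varepsilon}(x)\bigr),
\]
so that $\tilde{w}=\varepsilon w$ and $(\phi,\psi)=\bigl((\varepsilon+\tilde{w})u,\,(1+\beta u)\tilde{w}\bigr)$ hold by construction. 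Since the passage from the stationary version of \eqref{2.2*} to \eqref{2}, through \eqref{2.4}--\eqref{2.6}, is a reversible sequence of algebraic substitutions valid for $u,w>0$, any positive solution of \eqref{2} is carried by \eqref{2.5} to a positive solution of the steady $2$nd shadow system \eqref{LNlim2} with $d_{2}=a_{2}/\lambda_{j}-\varepsilon$. Thus the substance of the proof reduces to (i) verifying positivity of the pair so produced and (ii) reading off the asymptotics \eqref{asmint}.

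For positivity I would argue as follows. The denominator of $h_{1\varepsilon}$ in \eqref{2.5} is a sum of nonnegative terms and is strictly positive, so $u^{\pm}_{j,\varepsilon}>0$ as soon as $\phi^{\pm}_{j,\varepsilon}>0$, which Theorem \ref{LSprop} guarantees. For $w^{\pm}_{j,\varepsilon}$ one needs $h_{2\varepsilon}>0$, and a short computation with \eqref{2.5} shows $h_{2\varepsilon}\ge\psi-\beta\phi$, so it suffices to have $\psi^{\pm}_{j,\varepsilon}>\beta\phi^{\pm}_{j,\varepsilon}$. Using the expansion \eqref{ppe}, at $\varepsilon=0$ one finds
\[
\psi^{\pm}_{j,0}(x)-\beta\phi^{\pm}_{j,0}(x)
=s^{\pm}_{j,0}\,\tfrac{b_{2}}{a_{2}}\bigl(1+\mu^{\pm}_{j,0}\varphi_{j}(x)\bigr),
\]
so positivity at $\varepsilon=0$ reduces to $s^{\pm}_{j,0}>0$ together with $1+\mu^{\pm}_{j,0}\varphi_{j}(x)>0$ on $\overline{\Omega}$. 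The latter is exactly the admissibility condition already built into Proposition \ref{prop1}, since the profile function $h_{j}(\mu)$ that selects $\mu^{\pm}_{j,0}$ is only finite for those $\mu$ with $1+\mu\varphi_{j}>0$. By continuity on the compact set $\overline{\Omega}$ together with the $C^{1}$ dependence of $(\phi^{\pm}_{j,\varepsilon},\psi^{\pm}_{j,\varepsilon},s^{\pm}_{j}(\varepsilon),\mu^{\pm}_{j}(\varepsilon))$ on $\varepsilon$, the strict inequality $\psi^{\pm}_{j,\varepsilon}>\beta\phi^{\pm}_{j,\varepsilon}$ persists for all sufficiently small $\varepsilon>0$, which is how I would fix $\overline{\varepsilon}_{j,\pm}$.

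The asymptotics then follow by evaluating $h_{1\varepsilon},h_{2\varepsilon}$ at $\varepsilon=0$ via \eqref{2.8}--\eqref{h}. From the limits $\phi^{\pm}_{j,0}=s^{\pm}_{j,0}$ and $\psi^{\pm}_{j,0}-\beta\phi^{\pm}_{j,0}=s^{\pm}_{j,0}\tfrac{b_{2}}{a_{2}}(1+\mu^{\pm}_{j,0}\varphi_{j})$ I would obtain
\[
u^{\pm}_{j,0}(x)=h_{10}(\phi^{\pm}_{j,0},\psi^{\pm}_{j,0})
=\frac{\phi^{\pm}_{j,0}}{\psi^{\pm}_{j,0}-\beta\phi^{\pm}_{j,0}}
=\frac{a_{2}}{b_{2}\bigl(1+\mu^{\pm}_{j,0}\varphi_{j}(x)\bigr)},
\]
and $\varepsilon w^{\pm}_{j,\varepsilon}=h_{2\varepsilon}(\phi^{\pm}_{j,\varepsilon},\psi^{\pm}_{j,\varepsilon})\to h_{20}(\phi^{\pm}_{j,0},\psi^{\pm}_{j,0})=\tfrac{b_{2}}{a_{2}}s^{\pm}_{j,0}(1+\mu^{\pm}_{j,0}\varphi_{j})$, which are precisely \eqref{asmint}; uniform convergence is inherited from that of $(\phi^{\pm}_{j,\varepsilon},\psi^{\pm}_{j,\varepsilon})$ and the joint smoothness of $h_{i\varepsilon}$. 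The $C^{1}$ parameterization of $\varGamma_{j,\pm}$ by $\varepsilon$ is immediate from composing the $C^{1}$ maps $\varepsilon\mapsto(\phi^{\pm}_{j,\varepsilon},\psi^{\pm}_{j,\varepsilon})$ with the smooth $h_{i\varepsilon}$, and unboundedness follows from $\|w^{\pm}_{j,\varepsilon}\|_{L^{\infty}}=\varepsilon^{-1}\|h_{2\varepsilon}(\phi^{\pm}_{j,\varepsilon},\psi^{\pm}_{j,\varepsilon})\|_{L^{\infty}}\to+\infty$, which also exhibits the blow-up rate $O(\varepsilon^{-1})$. The main obstacle I anticipate is the uniform positivity in the second paragraph: one must confirm that the nodal structure of $\varphi_{j}$ (which changes sign for $j\ge 1$) does not spoil $1+\mu^{\pm}_{j,0}\varphi_{j}>0$, i.e. that the $\mu^{\pm}_{j,0}$ produced by Proposition \ref{prop1} lie strictly inside the interval where $h_{j}$ is finite, and that this margin survives the $O(\varepsilon)$ perturbation uniformly in $x\in\overline{\Omega}$. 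Everything else is a transcription of Theorem \ref{LSprop} through the explicit, smooth, and (for $u,w>0$) bijective change of variables \eqref{change}.
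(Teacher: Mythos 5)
Your proposal is correct and follows essentially the same route as the paper: the paper's proof of Theorem \ref{blthm} consists precisely of substituting the branch \eqref{ppe} from Theorem \ref{LSprop} into the inverse transformation \eqref{2.5}, i.e.\ $u^{\pm}_{j,\varepsilon}=h_{1\varepsilon}(\phi^{\pm}_{j,\varepsilon},\psi^{\pm}_{j,\varepsilon})$, $w^{\pm}_{j,\varepsilon}=\varepsilon^{-1}h_{2\varepsilon}(\phi^{\pm}_{j,\varepsilon},\psi^{\pm}_{j,\varepsilon})$, and then reading off \eqref{asmint} from \eqref{2.5}, \eqref{2.8} and \eqref{h}. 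Your added verification of positivity (via $h_{2\varepsilon}\ge\psi-\beta\phi$, the bound $m_{j}<\mu^{\pm}_{j,0}<M_{j}$ ensuring $1+\mu^{\pm}_{j,0}\varphi_{j}>0$ on $\overline{\Omega}$, and its persistence under the $O(\varepsilon)$ perturbation) is a correct filling-in of details the paper leaves implicit.
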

From the view-point of the bifurcation theory,
it can be said that Theorem \ref{blthm} shows a
phenomenon so called
bifurcation from infinity at $d_{2}=a_{2}/\lambda_{j}$.

In Section 4, we investigate the stability/instability  of each steady state $(u^\pm_{j,\varepsilon}(x)$, $w^\pm_{j,\varepsilon}(x))$
for the time-evolutional  2nd shadow system \eqref{2.2*} with small $\varepsilon=a_{2}\lambda_{j}^{-1}-d_{2}>0$.
It suffices to investigate the eigenvalue problem corresponding to the linearized  system of \eqref{1} around the steady state $(\phi^{\pm}_{j,\varepsilon}(x), \psi^{\pm}_{j,\varepsilon}(x))$,  which  can be expressed  in the following form:
\begin{equation}\label{eigen}
 \sigma T^{\varepsilon}(x)
\biggl[
\begin{array}{c}
\widehat{\phi}(x)\\
\widehat{\psi}(x)
\end{array}
\biggr]
=
\mathcal{L}
\biggl[
\begin{array}{c}
\widehat{\phi}(x)\\
\widehat{\psi}(x)
\end{array}
\biggr]
+\varepsilon
F^{\varepsilon}_{(\phi_{\varepsilon}, \psi_{\varepsilon})}(x)
\biggl[
\begin{array}{c}
\widehat{\phi}(x)\\
\widehat{\psi}(x)
\end{array}
\biggr].
\end{equation}
By applying  perturbation argument, it is naturally  expected  that for each small $\varepsilon>0$  the eigenvalue problem \eqref{eigen} has at least one eigenvalue near zero, the sign of the real part of the eigenvalue near zero may determine the stability/instability of the steady state. In Section 4, by seeking the precise location of an  eigenvalue near zero and applying special Lyapunov-Schmidt decomposition estimates on the corresponding eigenfunction, we prove the spectral instability and nonlinear instability of all the branches of steady states of \eqref{2.2*} with  small enough $\varepsilon>0$.
\begin{thm}\label{instthm}
Let $(u^\pm_{j,\varepsilon}(x), w^\pm_{j,\varepsilon}(x))$
be the steady states of \eqref{2.2*} with
$d_{2}=a_{2}\lambda_{j}^{-1}-\varepsilon $  obtained in Theorem \ref{blthm}.
Then for small enough $\varepsilon >0$,
$(u^\pm_{j,\varepsilon}(x), w^\pm_{j,\varepsilon}(x))$
are spectrally unstable; and the  eigenvalue problem \eqref{eigen} has a positive  eigenvalue $\sigma_{\varepsilon}=\varepsilon \Lambda_{\varepsilon}>0$ with $\Lambda_{\varepsilon}\to \lambda_j$ as $\varepsilon\to 0$.
\end{thm}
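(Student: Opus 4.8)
The plan is to locate the eigenvalue of \eqref{eigen} nearest to $\sigma=0$ by a Lyapunov--Schmidt reduction built on the two-dimensional kernel \eqref{2.9} of $\mathcal{L}$. Since the conjectured eigenvalue has order $\varepsilon$, I would substitute $\sigma=\varepsilon\Lambda$ into \eqref{eigen}, so that the problem reads
\begin{equation*}
\mathcal{L}\boldsymbol{\Phi}+\varepsilon F^{\varepsilon}_{(\phi_{\varepsilon},\psi_{\varepsilon})}\boldsymbol{\Phi}
-\varepsilon\Lambda\,T^{\varepsilon}(x)\boldsymbol{\Phi}=0,\qquad \boldsymbol{\Phi}=(\widehat{\phi},\widehat{\psi}).
\end{equation*}
Here $T^{\varepsilon}(x)$ is the Jacobian of $(\tfrac{\varepsilon}{d_1}h_{1\varepsilon},\tfrac1{d_2}h_{2\varepsilon})$ in $(\phi,\psi)$ at $(\phi^{\pm}_{j,\varepsilon},\psi^{\pm}_{j,\varepsilon})$. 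The decisive structural feature is that the first row of $T^{\varepsilon}$ carries the factor $\varepsilon/d_1$, so that by \eqref{h} and the limits \eqref{asmint} (with $d_2\to a_2/\lambda_j$), writing $k:=\beta+b_2/a_2$,
\begin{equation*}
T^{0}(x)=\begin{bmatrix}0&0\\[1mm]-\dfrac{\beta\lambda_j}{a_2}&\dfrac{\lambda_j}{a_2}\end{bmatrix}.
\end{equation*}
I also record the cokernel, obtained from $\mathcal{L}^{*}$ under the Neumann condition:
$\mathrm{Ker}(\mathcal{L}^{*})=\mathrm{Span}\{\boldsymbol{e}^{*}_1:=(1,0),\ \boldsymbol{e}^{*}_2:=(-k\varphi_j,\varphi_j)\}$.

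First I would split $\boldsymbol{\Phi}=c_1\boldsymbol{e}_1+c_2\boldsymbol{e}_2+\boldsymbol{\Phi}^{\perp}$ with $\boldsymbol{\Phi}^{\perp}\in\mathrm{Range}(\mathcal{L})$. Projecting onto $\mathrm{Range}(\mathcal{L})$ and inverting $\mathcal{L}$ there gives $\boldsymbol{\Phi}^{\perp}=O(\varepsilon)$. Projecting onto the cokernel (equivalently, pairing with $\boldsymbol{e}^{*}_1,\boldsymbol{e}^{*}_2$) annihilates the kernel part of $\mathcal{L}\boldsymbol{\Phi}$, so the projected equation is $O(\varepsilon)$ and may be divided by $\varepsilon$, leaving
\begin{equation*}
\int_{\Omega}\bigl(F^{\varepsilon}_{(\phi_{\varepsilon},\psi_{\varepsilon})}\boldsymbol{\Phi}-\Lambda\,T^{\varepsilon}\boldsymbol{\Phi}\bigr)\cdot\boldsymbol{e}^{*}_i\,\dx=0,\qquad i=1,2.
\end{equation*}
Since $\boldsymbol{\Phi}^{\perp}=O(\varepsilon)$, at $\varepsilon=0$ this reduces to the $2\times2$ generalized eigenvalue problem $(A-\Lambda B)(c_1,c_2)^{\top}=0$, with $A_{il}=\int_\Omega(F^{0}\boldsymbol{e}_l)\cdot\boldsymbol{e}^{*}_i\,\dx$ and $B_{il}=\int_\Omega(T^{0}\boldsymbol{e}_l)\cdot\boldsymbol{e}^{*}_i\,\dx$, where $F^{0}$ is the Jacobian of $(f^0_1,f^0_2)$ from \eqref{4} at the limit profile. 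Because the first row of $T^{0}$ vanishes and $\int_\Omega\varphi_j\,\dx=0$ for $j\ge1$, I expect to find the \emph{degenerate} matrix $B=\mathrm{diag}(0,\lambda_j/a_2)$. Then $\det(A-\Lambda B)=\det A-\tfrac{\lambda_j}{a_2}A_{11}\Lambda$ is affine in $\Lambda$, with unique root $\Lambda_{0}=\tfrac{a_2}{\lambda_j}\,\det A/A_{11}$ whenever $A_{11}\neq0$.

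The crux will be the evaluation of $A$ along the limiting profile \eqref{asmint}. Writing $m:=1+\mu^{\pm}_{j,0}\varphi_j$ and $h_0=a_2/(b_2m)$, I would check that $F^{0}\boldsymbol{e}_1$ equals a scalar (proportional to $\mu^{\pm}_{j,0}\varphi_j/m^2$) times $(\tfrac1{d_1}(a_1-2b_1h_0),\,k\lambda_j)$. This forces $A_{11}=-\tfrac{b_2}{a_2}\mu^{\pm}_{j,0}A_{12}$, and a short cancellation in $A_{11}A_{22}-A_{12}A_{21}$, in which all $\varphi_j^2$-weighted moments cancel identically, should yield the clean identity $\det A=\tfrac{\lambda_j^{2}}{a_2}A_{11}$, hence $\Lambda_0=\lambda_j$. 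Moreover $A_{11}$ turns out to be a positive constant times $h_j'(\mu^{\pm}_{j,0})\int_\Omega m^{-1}\,\dx$, where $h_j$ is the function of \cite{LNY2} introduced in Section~2; thus $A_{11}\neq0$ is precisely the non-degeneracy $h_j'(\mu^{\pm}_{j,0})\neq0$ already used to build the branch in Theorem~\ref{LSprop}, and it also guarantees that $\Lambda=\lambda_j$ is a \emph{simple} root of the affine determinant (its leading coefficient $-\tfrac{\lambda_j}{a_2}A_{11}$ being nonzero).

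Finally, since $\Lambda_0=\lambda_j$ is a simple root and all data depend smoothly on $\varepsilon$, I would apply the Implicit Function Theorem to the reduced system (normalizing, say, $c_1=1$) to obtain a $C^1$ branch $(\Lambda_{\varepsilon},c(\varepsilon))$ with $\Lambda_{\varepsilon}\to\lambda_j$ as $\varepsilon\downarrow0$; the Lyapunov--Schmidt correspondence then returns a genuine eigenpair of \eqref{eigen} with $\sigma_{\varepsilon}=\varepsilon\Lambda_{\varepsilon}>0$ for all small $\varepsilon>0$, which proves spectral instability. The main obstacle will be the singular-perturbation character caused by the degeneracy of $T^{\varepsilon}$ at $\varepsilon=0$ (the vanishing first row): this both dictates the correct scaling $\sigma=\varepsilon\Lambda$ and makes $B$ rank-deficient, collapsing the two would-be small eigenvalues into a single order-$\varepsilon$ eigenvalue, so that it is exactly this degeneracy together with the identity $\det A=\tfrac{\lambda_j^2}{a_2}A_{11}$ that must be handled with care to pin down $\Lambda_\varepsilon\to\lambda_j$.
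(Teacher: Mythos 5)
Your proposal is correct and takes essentially the same route as the paper: the scaling $\sigma=\varepsilon\Lambda$, a Lyapunov--Schmidt reduction built on the two-dimensional kernel \eqref{2.9} paired against $\mathrm{Ker}(\mathcal{L}^{*})$, a limiting finite-dimensional problem whose unique root is $\Lambda_{0}=\lambda_{j}$, and the Implicit Function Theorem with non-degeneracy ultimately supplied by Propositions \ref{prop1} and \ref{prop2} (equivalently $\mu^{\pm}_{j,0}h_{j}'(\mu^{\pm}_{j,0})>0$). The only divergence is bookkeeping: the paper pre-normalizes the eigenfunction ansatz as $(1,\beta+\tfrac{b_{2}}{a_{2}}\ell_0(x))+\gamma\boldsymbol{e}_{1}+\varepsilon(\widetilde{\phi},\widetilde{\psi})$ so that the limiting system solves sequentially ($\gamma=0$, then $\Lambda=\lambda_{j}$), whereas you keep general kernel coefficients and solve the rank-deficient pencil $(A-\Lambda B)c=0$; your identities $B=\mathrm{diag}(0,\lambda_{j}/a_{2})$, $A_{11}=-\tfrac{b_{2}}{a_{2}}\mu^{\pm}_{j,0}A_{12}$ and $\det A=\tfrac{\lambda_{j}^{2}}{a_{2}}A_{11}$ all check out against the paper's computation of $p_{11}$, $p_{21}$ and their derivatives, with the one caveat that $A_{11}$ is a positive multiple of $\mu^{\pm}_{j,0}h_{j}'(\mu^{\pm}_{j,0})\int_{\Omega}\ell_0^{-1}\,\dx$ rather than of $h_{j}'(\mu^{\pm}_{j,0})$ alone (the extra factor $\mu^{\pm}_{j,0}$ matters for the sign of $A_{11}$ on the minus branch, but not for the non-vanishing $A_{11}\neq 0$ that your argument actually uses).
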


By the perturbation of solutions of the 2nd shadow
system \eqref{LNlim2}, in Section 5
we construct the curves of nonconstant positive solutions
to the original  stationary SKT model \eqref{sSKT}
with sufficiently large $\alpha$ and small $a_{2}\lambda_{j}^{-1}-d_{2}>0$, and show that the obtained positive steady states of the SKT model \eqref{SKT}  are nonlinearly and spectrally unstable.

\begin{thm}\label{SKTthm}
Assume that $N\le 4$,
$a_{1}/a_{2}>b_{1}/b_{2}$ and $\lambda_{j}>0$ is simple. For any such $j$ and each sign of $\pm$,
there exists $\overline{\varepsilon}>0$ such that
for any $\underline{\varepsilon}\in (0,\overline{\varepsilon })$,
there exists a large
$\overline{\alpha}^\pm_{j,\underline{\varepsilon}}>0$ such that
if $\varepsilon\in (\underline{\varepsilon },
\overline{\varepsilon })$,
the  SKT model \eqref{SKT} with fixed
$\beta \ge 0$, $d_1>0$ and $\alpha>\overline{\alpha}^\pm_{j,\underline{\varepsilon}}$
admits a branch of nonconstant positive steady states
$$
\varGamma_{j,\pm}^{(\alpha )}=\Bigl\{\,
(u,v,\alpha):=
\Bigl(u^\pm_{j,\varepsilon,\alpha}(x),
v^\pm_{j,\varepsilon,\alpha}(x),
\,\alpha\Bigr)\,:\,
\alpha >\overline{\alpha}^\pm_{j,\underline{\varepsilon}}\,
\Bigr\}.
$$
Here $(u^\pm_{j, \varepsilon,\alpha}(x ),
v^\pm_{j,\varepsilon,\alpha}(x))$ satisfy
\begin{equation}\label{uni}
\lim\limits_{\alpha\to\infty}
(u^\pm_{j, \varepsilon,\alpha}(x),
\alpha v^\pm_{j,\varepsilon,\alpha}(x))=
(u^\pm_{j, \varepsilon}(x),
w^\pm_{j,\varepsilon}(x))
\ \ \mbox{uniformly in}\ \overline{\Omega},
\end{equation}
with $(u^\pm_{j, \varepsilon}(x), w^\pm_{j,\varepsilon}(x),\varepsilon )\in\varGamma_{j,\pm}$
defined by \eqref{vG}.
Furthermore, $(u^\pm_{j, \varepsilon,\alpha}(x),
v^\pm_{j, \varepsilon,\alpha}(x))$
 are spectrally unstable.
\end{thm}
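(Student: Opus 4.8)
The plan is to construct the SKT steady states as a perturbation, in the small parameter $\eta=1/\alpha$, of the blow-up shadow solutions supplied by Theorem \ref{blthm}, and then to transfer the spectral instability of Theorem \ref{instthm} to the perturbed states. First I would fix the sign $\pm$, the index $j$, and an $\varepsilon$ in a compact subinterval $[\underline{\varepsilon},\overline{\varepsilon}]$ bounded away from $0$; on such an interval the shadow solution $(u^{\pm}_{j,\varepsilon},w^{\pm}_{j,\varepsilon})$ of $\varGamma_{j,\pm}$ is a fixed, smooth, strictly positive and uniformly bounded pair, since the blow-up occurs only as $\varepsilon\downarrow0$. Writing $w=\alpha v$, the stationary SKT system \eqref{sSKT} becomes an $\alpha$-dependent system whose right-hand side equals that of the stationary $2$nd shadow system \eqref{LNlim2} plus the lower-order terms $-c_{1}uw/\alpha$ and $-c_{2}w^{2}/\alpha$, which are $O(1/\alpha)$ in $W^{2,p}(\Omega)$ with $p>N$ (here the hypothesis $N\le 4$ guarantees the Sobolev embeddings that control these quadratic nonlinearities). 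Thus I would define a map $G:W^{2,p}_{\nu}\times W^{2,p}_{\nu}\times[0,\eta_{0})\to L^{p}\times L^{p}$ with $G(\,\cdot\,,0)$ the shadow operator and $G(u^{\pm}_{j,\varepsilon},w^{\pm}_{j,\varepsilon},0)=0$, and apply the Implicit Function Theorem in $\eta=1/\alpha$.

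The decisive point is the invertibility of the Fr\'echet derivative $D_{(u,w)}G$ at the shadow solution with $\eta=0$, i.e. the linearization of the stationary shadow system. I would read this off from the spectral analysis of Section 4. In the transformed coordinates \eqref{change}, that linearization is $\mathcal{L}+\varepsilon F^{\varepsilon}_{(\phi_{\varepsilon},\psi_{\varepsilon})}$, and the generalized eigenvalue problem \eqref{eigen} has its eigenvalue bifurcating from $0$ equal to $\sigma_{\varepsilon}=\varepsilon\Lambda_{\varepsilon}$ with $\Lambda_{\varepsilon}\to\lambda_{j}>0$ by Theorem \ref{instthm}; in particular $\sigma_{\varepsilon}>0$ is bounded away from $0$, so $0$ is not an eigenvalue and $\mathcal{L}+\varepsilon F^{\varepsilon}_{(\phi_{\varepsilon},\psi_{\varepsilon})}$ is an isomorphism (it is elliptic up to a compact lower-order perturbation, hence Fredholm of index $0$, so triviality of the kernel suffices). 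Pulling back through the diffeomorphism \eqref{change}, $D_{(u,w)}G(\,\cdot\,,0)$ is an isomorphism of $W^{2,p}_{\nu}\times W^{2,p}_{\nu}$ onto $L^{p}\times L^{p}$. The IFT then yields, for each fixed $\varepsilon\in(\underline{\varepsilon},\overline{\varepsilon})$ and all $\alpha>\overline{\alpha}^{\pm}_{j,\underline{\varepsilon}}$, a solution $(u^{\pm}_{j,\varepsilon,\alpha},w^{\pm}_{j,\varepsilon,\alpha})$ depending smoothly on $1/\alpha$ with $(u^{\pm}_{j,\varepsilon,\alpha},w^{\pm}_{j,\varepsilon,\alpha})\to(u^{\pm}_{j,\varepsilon},w^{\pm}_{j,\varepsilon})$ in $W^{2,p}$, hence uniformly, which is \eqref{uni}. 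The threshold $\overline{\alpha}^{\pm}_{j,\underline{\varepsilon}}$ must depend on $\underline{\varepsilon}$ because the size of the shadow solution, and thus of the correction the IFT has to absorb, grows as $\varepsilon\downarrow0$. Setting $v^{\pm}_{j,\varepsilon,\alpha}=w^{\pm}_{j,\varepsilon,\alpha}/\alpha$ and using uniform convergence to a strictly positive limit, positivity of $(u^{\pm}_{j,\varepsilon,\alpha},v^{\pm}_{j,\varepsilon,\alpha})$ holds for $\alpha$ large, producing the branch $\varGamma^{(\alpha)}_{j,\pm}$.

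For the instability I would linearize the full evolution system \eqref{SKT} around $(u^{\pm}_{j,\varepsilon,\alpha},v^{\pm}_{j,\varepsilon,\alpha})$ and track a single eigenvalue. Under the scaling $w=\alpha v$ the SKT eigenvalue problem is an $O(1/\alpha)$ perturbation of the shadow eigenvalue problem \eqref{eigen}, whose unstable eigenvalue $\sigma_{\varepsilon}=\varepsilon\Lambda_{\varepsilon}>0$ is simple and isolated. I would argue that this eigenvalue persists: using the uniform resolvent control afforded by the isomorphism established above, together with a Lyapunov--Schmidt decomposition of the eigenfunction along $\mathrm{Ker}(\mathcal{L})$ as in Section 4, the SKT linearization has an eigenvalue $\sigma_{\varepsilon,\alpha}$ with $\sigma_{\varepsilon,\alpha}\to\sigma_{\varepsilon}$ as $\alpha\to\infty$, hence with positive real part for $\alpha$ large, giving spectral instability.

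The hard part, where the cross-diffusion structure really enters, is this last spectral-perturbation step. The principal part of the SKT linearization is $\alpha$-dependent and the operator is non-self-adjoint, so one cannot simply invoke analytic perturbation theory in $1/\alpha$; the danger is that eigenvalues escape to infinity, or that a spurious branch crosses into the right half-plane and the unstable shadow root is lost. The remedy is the same weighted formulation and Lyapunov--Schmidt reduction used to prove Theorem \ref{instthm}: reduce the eigenvalue equation to a finite-dimensional determinant on $\mathrm{Ker}(\mathcal{L})$, show this determinant depends continuously on $1/\alpha$ with a nonzero limit carrying the unstable root $\sigma_{\varepsilon}$, and use uniform-in-$\alpha$ elliptic estimates to confine all remaining eigenvalues away from $\{\mathrm{Re}\,\sigma\ge\delta\}$ except this one. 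Establishing those uniform-in-$\alpha$ estimates for the cross-diffusion operator is the main obstacle.
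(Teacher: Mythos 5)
Your overall strategy (treat $\eta=1/\alpha$ as a small parameter, perturb the shadow steady state by the Implicit Function Theorem, then perturb the unstable eigenvalue) coincides with the paper's, but the justification you give for the decisive step has a genuine gap. You claim that because the generalized eigenvalue problem \eqref{eigen} has the eigenvalue $\sigma_{\varepsilon}=\varepsilon\Lambda_{\varepsilon}>0$ of Theorem \ref{instthm}, the value $0$ cannot be an eigenvalue, hence $\mathcal{L}+\varepsilon F^{\varepsilon}_{(\phi_{\varepsilon},\psi_{\varepsilon})}$ has trivial kernel and is an isomorphism. This inference is invalid: $\mathcal{L}$ has a \emph{two-dimensional} kernel \eqref{kerH}, so for $\varepsilon>0$ two spectral branches emanate from $\sigma=0$, and Theorem \ref{instthm} (equivalently Theorem \ref{thm5.1}) tracks only one of them --- its eigenfunction ansatz \eqref{egeq} pins the $\boldsymbol{e}_{2}$-component of the kernel part at the fixed value $\frac{b_{2}}{a_{2}}\mu^{\pm}_{j,0}$, and makes no claim of exhausting the spectrum near $0$. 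A priori the second branch could sit exactly at $0$, in which case $\mathcal{L}+\varepsilon F^{\varepsilon}_{(\phi_{\varepsilon},\psi_{\varepsilon})}$ has nontrivial kernel even though a positive eigenvalue exists, and your IFT step collapses. Ruling this out is precisely the content of the paper's non-degeneracy determinant \eqref{detnot0} in Lemma \ref{nondeglem}, proved from Propositions \ref{prop1} and \ref{prop2} and the hypothesis $A>B$; Lemma \ref{nondeglem2} then transports this invertibility to $\varepsilon>0$ and $\eta\geq 0$ by resolvent perturbation, and that is what licenses the IFT for the map $K$ in \eqref{Kdef} (Theorem \ref{thm6.1}). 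Since for fixed $\varepsilon>0$ and $s\neq 0$ the reparametrization $(s,\mu,\boldsymbol{\varPhi}^{*})\mapsto(\phi,\psi)$ is a local isomorphism, invertibility of the reduced derivative is indeed equivalent to the invertibility you need; but it must be obtained from the determinant computation \eqref{detnot0}, not from the instability theorem, and your proof never invokes it.

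The instability half of your argument is also incomplete by your own account: you name the eigenvalue-persistence step ``the main obstacle'' and sketch remedies (uniform-in-$\alpha$ elliptic estimates, confinement of the remaining spectrum) without carrying them out. Note also that part of what you ask for is unnecessary: spectral instability only requires exhibiting \emph{one} eigenvalue with positive real part, not keeping the rest of the spectrum out of $\{\mathrm{Re}\,\sigma\geq\delta\}$. The paper avoids all uniform-in-$\alpha$ analysis of the cross-diffusion operator by staying in the $(\phi,\psi)$ coordinates \eqref{change}: there the SKT eigenvalue problem \eqref{egeq2} differs from the shadow problem \eqref{eige} only by the regular perturbation $-\eta R^{\varepsilon,\eta}_{(\phi,\psi)}$, so the reduced system becomes \eqref{Heq2}, i.e.\ $H=-\frac{\eta}{\varepsilon}(\cdots)$, and the Implicit Function Theorem applied at the known $\eta=0$ solution, using the already-established non-degeneracy \eqref{nonzero2}, produces the unstable eigenvalue $\sigma^{\pm}_{j}(\varepsilon,\eta)=\varepsilon\varLambda^{\pm}_{j}(\varepsilon,\eta)$ for all $\eta\in[0,\delta^{*})$ (Theorem \ref{thm6.2}); instability of $\varGamma^{(\alpha)}_{j,\pm}$ then follows by the smoothness of the change of variables. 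The requirement $\varepsilon>\underline{\varepsilon}$ is exactly what keeps the factor $\eta/\varepsilon$ uniformly small, which is why $\overline{\alpha}^{\pm}_{j,\underline{\varepsilon}}$ depends on $\underline{\varepsilon}$ --- a point you identified correctly. As a minor remark, $N\leq 4$ is not needed to control the quadratic terms in $W^{2,p}$ (any $p>N$ suffices for that); it enters through Lemma \ref{stlem}, which guarantees the existence of $\mu^{\pm}_{j,0}$ in Proposition \ref{prop1}.
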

\begin{remark}
For the special case when $\Omega$ is one or two dimensional, it is well known that for any fixed nonnegative initial data the SKT evolutional system \eqref{SKT} with $\beta=0$ and $\alpha>0$ admits a uniformly bounded global solution in time, however the spectral and nonlinear instability of the lower branches of positive steady states obtained in  Theorem \ref{SKTthm} when $d_2$ is slightly less than  $a_2/\lambda_j$   indicate that no matter how closeness of the initial data to the unstable steady state, there always exist global solutions  staying away from the unstable steady state for large $t$,  the longtime behavior of such  solutions in time are still open problems. Also note that for the case when $d_2$ is slightly less than $a_2/\lambda_1$  in \cite{LNY2} it was  proved that the SKT model \eqref{SKT} with $\beta=0$ and large $\alpha$ has another two (upper) branches of  steady states perturbed from the 1st shadow system \eqref{LNlim2} and they are stable steady states, which attract all the solutions with small initial perturbation of such steady states. For the other cases when $\Omega$ is higher dimensional or $\beta>0$, it is unknown that whether the SKT model \eqref{SKT} still admit  global solutions in time when the initial data are near the unstable lower branches of steady states, the solutions may blow-up in finite time or in infinite time or tend to other types of steady states.
\end{remark}
\begin{remark}
From the expression of  $s^{\pm}_{j}(0)$, $\mu^{\pm}_{j}(0)$ and $\boldsymbol{\varPhi}^{*\pm}_{j}(x,0)$ obtained in Section 3,  we can see that $\mu^{\pm}_{j}(0)$ is independent of $d_1$,  $s^{\pm}_{j}(0)$ has a positive  lower bound for all $d_1>0$, and $s^{\pm}_{j}(0)\sim \frac{C_1}{d_1}$ as $d_1\to 0$, which with
 Theorems \ref{blthm} and \ref{SKTthm} indicate that for any  $d_1>0$ and $\beta\ge 0$, the 2nd shadow system \eqref{LNlim2} with small $a_{2}\lambda_{j}^{-1}-d_{2}>0$ admit several branches of positive solutions $(u^\pm_{j, \varepsilon}(x),
w^\pm_{j,\varepsilon}(x))$ with  bounded  $u$-component (uniformly in both $d_1$ and $d_2$)  and blowing up $w$-component (as
 $d_2 \uparrow a_{2}/\lambda_{j}$ or $d_1\downarrow 0$). Also note that $s^{\pm}_{j,0}\to s^{\pm *}_{j,0}>0$, $\boldsymbol{\varPhi}^{*\pm}_{j}(x,0)\to (0, \widetilde{\psi}^{\pm*}_{j}(x,\varepsilon ))$ as $d_1\to +\infty$;
thus the expressions of \eqref{ppe} in Theorem \ref{LSprop} and \eqref{asmint} in Theorem \ref{blthm}  have finite limits for the limiting case when
$d_1\to +\infty$, and the limiting asymptotic  expression  of the blowing up solution branches in \eqref{ppe} and \eqref{asmint} in the limiting case $d_1\to +\infty$ coincide with the asymptotic expression of the blowing-up solution branches   obtained in \cite{LW2} and \cite{Tang} for the 3rd shadow system \eqref{LNlim3} (the limiting system of \eqref{LNlim2} when $d_1\to +\infty$ after some transformation).

  For the limiting case when $d_1\to +\infty$, it is known that the  positive  solution $(u(x),w(x))$ of the 2nd shadow system \eqref{LNlim2} satisfies $u(x)(1+w(x))\to \tau$ (a positive constant), where $(\tau ,w(x))$ satisfies the  3rd shadow system \eqref{LNlim3}. In \cite{LW2} and \cite{Tang}, by applying special  transformation to the 3rd shadow system \eqref{LNlim3} and applying Lyapunov-Schmidt reduction method (where the kernel of the limiting  linearized operator can be one-dimensional after some transformation), the authors proved the existence and asymptotic behavior  of some blowing up branches of positive solutions $(\tau, w(x))$ to the 3rd shadow system \eqref{LNlim3}. It is worth mentioning that the techniques and perturbation  argument applied in \cite{LW2} and \cite{Tang} to the 3rd shadow system or to the corresponding  perturbed  stationary SKT competition model \eqref{sSKT}   (when both $d_1$ and $\alpha$ are large enough) are not applicable to the case when $d_1$ is not large enough. As for the 2nd shadow  system \eqref{LNlim2} with bounded $d_1>0$,
the higher degeneracy of the limiting operator $\mathcal{L}$ (with two dimensional $\mbox{Ker}\,(\mathcal{L})$)
 induces some technical difficulties and complexity in applying the Lyapunov-Schmidt reduction method for obtaining the detailed  asymptotic structure and doing stability analysis of the blowing up steady states.
\end{remark}

\section{Existence and parameterization of the solution branches to the 2nd shadow system \eqref{LNlim2} near the blow-up points}

As formulated in Section 2, in order to magnify a neighborhood of each blow-up point at
$d_{2}=a_{2}/\lambda_{j}$ by rescaling,
we employ the rescaling transformation \eqref{change}
in the 2nd shadow system \eqref{LNlim2}, the
new unknown functions
$\phi$ and $\psi$
satisfy the Neumann boundary value problem \eqref{2} or \eqref{L3}, which can be expressed as the following nonlinear system with small parameter $\varepsilon>0$:
\begin{equation}\label{F=0}
    \mathcal{L}\biggl[\begin{array}{c}
    \phi\\
    \psi\end{array}\biggr]+\varepsilon\biggl[\begin{array}{l}
    f_{1}(\phi,\psi,\varepsilon )\\
    f_{2}(\phi,\psi, \varepsilon )
    \end{array}\biggr]=
    \biggl[\begin{array}{c}
    0\\
    0\end{array}\biggr],
\end{equation}
where the linear operator $\mathcal{L}: X\to Y$ is defined by
\begin{equation}\label{Ldef}
\mathcal{L}:=\biggl[
\begin{array}{cc}
\Delta & 0\\
-\left(\beta+\frac{b_{2}}{a_2}\right)\lambda_{j} & \Delta +\lambda_{j}
\end{array}
\biggr],
\end{equation}
with $X:= W^{2,p}_{\nu}(\Omega )\times W^{2,p}_{\nu }(\Omega )$ and $Y:=L^{p}(\Omega)\times L^{p}(\Omega)$ for $p>1$;
$f_1$ and $f_2$ are defined by
\begin{equation}\label{fgdef}
\begin{split}
&f_{1}(\phi, \psi, \varepsilon ):=\frac{h_{1\varepsilon}}{d_1}
(a_{1}-b_1h_{1\varepsilon}),\\
&f_{2}(\phi, \psi, \varepsilon ):=\frac{1}{\varepsilon}\left[\frac{h_{2\varepsilon}}{d_2}
(a_{2}-b_{2}h_{1\varepsilon})-\frac{h_{20}\lambda_j}{a_2}(a_{2}-b_{2}h_{10})\right],
\end{split}
\end{equation}
where $(h_{1\varepsilon }, h_{2\varepsilon })$ and
$(h_{10}, h_{20})$ are defined by \eqref{2.5} and \eqref{h},
respectively.

\subsection{Lyapunov-Schmidt reduction scheme}
In what follows, we use the Lyapunov-Schmidt reduction method
to construct the set of solutions to the system
\eqref{F=0} for small $\varepsilon>0$.
In a crucial step of our proof of Theorem \ref{LSprop},
we shall derive the limiting set of solutions of \eqref{F=0}
as $\varepsilon\to 0$.
More precisely, we shall
construct the set of positive solutions of
\eqref{F=0}
as $\varepsilon\to 0$ (see Theorem \ref{LSprop}).

In order to obtain the kernel of the
linear part $\mathcal{L}\,:\,X\to Y$
of \eqref{Ldef},
we solve the Neumann boundary value problem of the following
linear elliptic system:
\begin{equation}\label{ker}
\begin{cases}
\Delta\phi =0\ \ &\mbox{in}\ \Omega,\\
\Delta\psi +\lambda_{j}\psi -\left(\beta+\frac{b_{2}}{a_2}\right)\lambda_{j}\phi=0
\ \ &\mbox{in}\ \Omega,\\
\frac{\partial \phi}{\partial \nu}=\frac{\partial \psi}{\partial \nu}=0
\ \ &\mbox{on}\
\partial\Omega.
\end{cases}
\end{equation}
Since
$\phi=\tau$
(constant)
from the first equation of \eqref{ker},
then we substitute it into the second equation of \eqref{ker} to see
$$-\Delta\psi(x) =\lambda_{j}\psi(x)-\lambda_j\left(\beta+\dfrac{b_{2}}{a_{2}}\right)\tau
\ \ \mbox{in}\ \Omega,\qquad
\frac{\partial \psi}{\partial \nu}(x)=0\ \ \mbox{on}\ \partial\Omega.
$$
Hence it follows that
$$
\psi(x)-\left(\beta+\dfrac{b_{2}}{a_{2}}\right)\tau=s\varphi_{j}(x),
\ \ \mbox{that is,}\ \
\psi(x)=\left(\beta+\dfrac{b_{2}}{a_{2}}\right)\tau+s\varphi_{j}(x),
$$
where
$s\in\mathbb{R}$ and
$\varphi_{j}(x)$ is
an eigenfunction satisfying \eqref{egf}.
Thus $\mbox{Ker}(\mathcal{L})$
consists of  the following two-dimensional space:
\begin{equation}\label{kerH}
\mbox{Ker}(\mathcal{L})=\mbox{Span}\,\left\{\,
\boldsymbol{e}_{1}:=
\left(1, \beta+\frac{b_{2}}{a_2}\right),\ \
\boldsymbol{e}_{2}:=
(0,1)
\varphi_{j}(x)\,\right\}.
\end{equation}
Here it follows that
$
\langle\boldsymbol{e}_{1},\boldsymbol{e}_{2}\rangle:=
\int_{\Omega}\boldsymbol{e}_{1}\cdot
\boldsymbol{e}_{2}=0$.
By the Fredholm alternative theorem,
we see that
$\mbox{Range}(\mathcal{L})=(\,\mbox{Ker}(\mathcal{L}^{*})\,)^\bot$ in $Y$,
where $\mathcal{L}^{*}\,:\,X\to Y$ is the adjoint operator of $\mathcal{L}$
expressed as
$$
\mathcal{L}^{*}=
\biggl[\begin{array}{cc}
\Delta & -(\beta+\frac{b_2}{a_2})\lambda_{j}\\
0 & \Delta +\lambda_{j}
\end{array}\biggr].
$$
It is easily verified that
\begin{equation}\label{kerHs}
\mbox{Ker}\,(\mathcal{L}^{*})
=
\mbox{Span}\,\biggl\{
\boldsymbol{e}_{1}^{*}:=\biggl(\dfrac{1}{|\Omega|},\,0\biggr),\quad
\boldsymbol{e}_{2}^{*}:=\biggl(-\beta-\dfrac{b_{2}}{a_{2}},\,1\biggr)\varphi_{j}(x)
\biggr\}.
\end{equation}
and thereby,
$$
\langle\boldsymbol{e}_{1},
\boldsymbol{e}_{1}^{*}\rangle=1,\qquad
\langle\boldsymbol{e}_{2},
\boldsymbol{e}_{2}^{*}\rangle=1,\qquad
\langle\boldsymbol{e}_{1},
\boldsymbol{e}_{2}^{*}\rangle=\langle\boldsymbol{e}_{2},
\boldsymbol{e}_{1}^{*}\rangle=0.
$$
For applying the Lyapunov-Schmidt reduction method,
we employ
the following direct sum decompositions
of $X$ and $Y$:
\begin{equation}\label{decom}
X=\mbox{Ker}(\mathcal{L})\oplus X_0
\quad\mbox{and}\quad
Y=\mbox{Ker}(\mathcal{L})\oplus Y_0,
\end{equation}
where
$X_0=\mbox{Range}(\mathcal{L})\cap X$, $Y_0=\mbox{Range}(\mathcal{L})$ and
$\mbox{Ker}(\mathcal{L})=\mbox{Span}\{\boldsymbol{e}_{1}, \boldsymbol{e}_{2}\}$.

By virtue of \eqref{decom},
we introduce the projection
$P\,:\,Y\to \mbox{Ker}(\mathcal{L}) $
as
\begin{equation}\label{Pdef}
P\,
\boldsymbol{\varPhi}
=
s\boldsymbol{e}_{1}+t\boldsymbol{e}_{2}
\quad\mbox{for}\quad
\boldsymbol{\varPhi}=(\phi, \psi)
\in Y,
\end{equation}
where
\begin{equation}\label{st}
\begin{array}{l}
s=\dfrac{\langle\boldsymbol{\varPhi},\boldsymbol{e}_{1}^{*}\rangle}
{\langle\boldsymbol{e}_{1},\boldsymbol{e}^{*}_{1}\rangle}
=\dfrac{1}{|\Omega |}\displaystyle\int_{\Omega }\phi(x)\,\dx,\\[4mm]
t=\dfrac{\langle\boldsymbol{\varPhi},\boldsymbol{e}_{2}^{*}\rangle}
{\langle\boldsymbol{e}_{2},\boldsymbol{e}^{*}_{2}\rangle}
=\displaystyle\int_{\Omega }\left[-\left(\beta+\frac{b_2}{a_2}\right)\phi(x)+\psi(x)\right]\,\varphi_{j}(x)\,\dx.
\end{array}
\end{equation}
Furthermore,
the projections
$P_{i}\,:\,Y\to\mbox{Span}\{\boldsymbol{e}_{i}\}$,\ $i=1,2$
are defined by
\begin{equation}\label{P1P2}
P_{1}\,\boldsymbol{\varPhi}=
s\boldsymbol{e}_{1},
\qquad
P_{2}\,\boldsymbol{\varPhi}=
t\boldsymbol{e}_{2}.
\end{equation}
Hence it follows that
$P=P_{1}+P_{2}$
and the operator
$I-P$
gives a projection from $Y$ to $Y_0=\mbox{Range}(\mathcal{L})$.
Then we decompose the unknown function
$\boldsymbol{\varPhi}=(\phi,\psi)\in X$
of the system \eqref{F=0} into
$\mbox{Span}\{\boldsymbol{e}_{1}\}$,
$\mbox{Span}\{\boldsymbol{e}_{2}\}$
and
$X_0$
components as
\begin{equation}\label{decomp}
    \boldsymbol{\varPhi}(x)=s\boldsymbol{e}_{1}(x)+t\boldsymbol{e}_{2}(x)+\varepsilon\boldsymbol{\varPhi}^*(x),
\end{equation}
where $(s,t)$ is defined by \eqref{st} and
$\varepsilon\,\boldsymbol{\varPhi}^*=(I-P)\boldsymbol{\varPhi}\in X_0$.

We set $t=\frac{b_{2}}{a_{2}}s\mu $ for $s\ne 0$, then
\begin{equation}\label{2.15}
    s\boldsymbol{e}_{1}+t\boldsymbol{e}_{2}=
    s
    \biggl[\begin{array}{c}
    1\\
    \beta+\frac{b_{2}}{a_{2}}(1+\mu\varphi_{j}(x))
    \end{array}\biggr],
\end{equation}
thus
\begin{equation}\label{2.16}
    \biggl[\begin{array}{c}
    \phi(x)\\
    \psi(x)
    \end{array}\biggr]=s
    \biggl[\begin{array}{c}
    1\\
    \beta+\frac{b_{2}}{a_{2}}(1+\mu\varphi_{j}(x))
    \end{array}\biggr]+\varepsilon
    \biggl[\begin{array}{c}
    \phi^*(x)\\
    \psi^*(x)
    \end{array}\biggr]
\end{equation}
with
$(\phi^*(x),\psi^*(x)):=\boldsymbol{\varPhi}^*(x)\in X_0=\mbox{Range}(\mathcal{L})\cap X.$

Denote
$\boldsymbol{\varPhi}^0=(\phi^{0},\psi^{0})
= s(1,\;\beta+\frac{b_{2}}{a_{2}}(1+\mu\varphi_{j}))$.
Using the Taylor expansion of $f_{i}(\phi,\psi,\varepsilon)$ and the
expression~\eqref{2.16}, it is easy to check that
\begin{equation}\label{2.17}
\begin{split}
f_{1}(\phi^{0}+\varepsilon\phi^*,\;\psi^{0}+\varepsilon\psi^*,\;\varepsilon)
&= f^{0}_{1}(\phi^{0},\psi^{0})
   +\varepsilon f^{1}_{1}(s, \mu,\phi^*,\psi^*,\varepsilon)\\
&= \frac{1}{d_{1}}\,
   \frac{a_{2}}{b_{2}(1+\mu\varphi_{j})}
   \left(a_{1}
   -b_{1}\frac{a_{2}}{b_{2}(1+\mu\varphi_{j})}\right)\\
&\quad+\varepsilon f^{*}_{1}(s,\mu,\phi^*,\psi^*,\varepsilon),
\end{split}
\end{equation}
and
\begin{equation}\label{2.18}
\begin{split}
f_{2}(\phi^{0}+\varepsilon\phi^*,\;\psi^{0}+\varepsilon\psi^*,\;\varepsilon)
&= f^{0}_{2}(\phi^{0},\psi^{0})
   +\varepsilon f^{1}_{2}(s, \mu,\phi^*,\psi^*,\varepsilon)\\
&= \frac{\lambda_{j}^{2}}{a_{2}}(\psi^{0}-\beta\phi^{0})
   -\frac{\lambda_{j}^{2}b_{2}}{a_{2}^{2}}\phi^{0}
   +\lambda_{j}\Bigl(\beta+\dfrac{b_{2}}{a_{2}}\Bigr)
     \frac{\phi^{0}}{\psi^{0}-\beta\phi^{0}}\\
&\quad+\varepsilon f^{*}_{2}(s, \mu,\phi^*,\psi^*,\varepsilon)\\[1ex]
&= s\,\frac{\lambda_{j}^{2}b_{2}}{a_{2}^{2}}\,\mu\varphi_{j}
   + \frac{\lambda_{j}(a_{2}\beta+b_{2})}{b_{2}(1+\mu\varphi_{j})}\\
&\quad+\varepsilon f^{*}_{2}(s, \mu,\phi^*,\psi^*,\varepsilon),
\end{split}
\end{equation}
with some bounded functions
$f^{*}_{j}(s, \mu, \phi^{*}, \psi^{*}, \varepsilon)$
of $C^1$ class,
where $f^0_{1}$ and $f^0_2$ are defined by \eqref{4}.

For small $\varepsilon>0$, plugging \eqref{2.16}-\eqref{2.18} into the system  \eqref{F=0},
then the system   \eqref{F=0} becomes the following system of $(s_,\mu,\phi^*,\psi^*)$
\begin{equation}\label{2.19}
        -\mathcal{L}\biggl[
    \begin{array}{l}
    \phi^*(x)\\
    \psi^*(x)
    \end{array}\biggr]
    =(I-P)\biggl[\begin{array}{c}
    f_{1}(\phi^{0}+\varepsilon\phi^*, \psi^{0}+\varepsilon\psi^*,\varepsilon)\\
    f_{2}(\phi^{0}+\varepsilon\phi^*, \psi^{0}+\varepsilon\psi^*,\varepsilon)
    \end{array}
    \biggr],
\end{equation}
and
\begin{equation}\label{2.20}
    P\biggl[\begin{array}{c}
    f_{1}(\phi^{0}+\varepsilon\phi^*,\psi^{0}+\varepsilon\psi^*,\varepsilon)\\
    f_{2}(\phi^{0}+\varepsilon\phi^*,\psi^{0}+\varepsilon\psi^*,\varepsilon)
    \end{array}
    \biggr]=0,
\end{equation}
with
$\boldsymbol{\varPhi}^0=(\phi^{0},\psi^{0})
= s(1,\;\beta+\frac{b_{2}}{a_{2}}(1+\mu\varphi_{j}))$
for $s,\mu\in \mathbb{R}$
and
$(\phi^*(x),\psi^*(x))\in X_0$.

We denote the linear operator $\mathcal{L}$ with the domain
restricted to $X_0$ by
$\mathcal{L}_{X_0} \; :\,X_0\to Y_0=\mbox{Range}(\mathcal{L}_{X_0})$, then
 $\mathcal{L}_{X_0}$ is an isomorphism from $X_0$ to $Y_0$.
By (\ref{2.17}) and (\ref{2.18}), we can rewrite the equations
(\ref{2.19}) and (\ref{2.20}) as
\begin{equation}\label{2.21}
    \begin{cases}
    P_{1}(F_{0}(s,\mu)+\varepsilon
    F_{1}(s,\mu,\boldsymbol{\varPhi}^*,\varepsilon))=0,\\
    P_{2}(F_{0}(s,\mu)+\varepsilon
    F_{1}(s,\mu,\boldsymbol{\varPhi}^*,\varepsilon))=0,\\
    \boldsymbol{\varPhi}^*=-\mathcal{L}_{X_0}^{-1}(I-P)(F_{0}(s,\mu)+\varepsilon
    F_{1}
    (s,\mu,\boldsymbol{\varPhi}^*,\varepsilon)),
    \end{cases}
\end{equation}
where
\[
F_{0}(s,\mu):=\left[\begin{array}{c}f^{0}_{1}(\mu),\\
f_{2}^{0}(s, \mu)\end{array}\right],\quad
F_{1}(s, \mu, \boldsymbol{\varPhi}^{*}, \varepsilon )=
\left[\begin{array}{c}
f^{*}_{1}(s,\mu, \boldsymbol{\varPhi}^{*}, \varepsilon )\\
f^{*}_{2}(s,\mu, \boldsymbol{\varPhi}^{*}, \varepsilon )\\
\end{array}
\right]
\]
with
\begin{equation}\label{2.22}
\begin{cases}
f^0_1(\mu):=
    \dfrac{1}{d_{1}}\dfrac{a_{2}}{b_{2}(1+\mu\varphi_{j}(x))}\left(a_{1}
    -b_{1}\dfrac{a_{2}}{b_{2}(1+\mu\varphi_{j}(x))}\right), \vspace{2mm} \\
f^0_2(s, \mu):=
    s\lambda^{2}_{j}\dfrac{b_{2}}{a^{2}_{2}}\mu\varphi_{j}(x)+
    \dfrac{\lambda_{j}(a_{2}\beta+b_{2})}{b_{2}(1+\mu\varphi_{j}(x))}.
\end{cases}
\end{equation}

As a framework to solve \eqref{2.21},
we define
$g_{j}(s, \mu, \boldsymbol{\varPhi}^{*}, \varepsilon )\in\mathbb{R}$
$(j=1,2)$ and
$g_{3}(s, \mu, \boldsymbol{\varPhi}^{*}, \varepsilon )\in X_0$
by
\begin{equation}\label{gcomp}
\begin{cases}
g_{1}(s,\mu,\boldsymbol{\varPhi}^*,\varepsilon )
\boldsymbol{e}_{1}:=
P_{1}(F_{0}(s,\mu)+\varepsilon
    F_{1}(s,\mu,\boldsymbol{\varPhi}^*,\varepsilon)),\\
    g_{2}(s,\mu,\boldsymbol{\varPhi}^*,\varepsilon)\boldsymbol{e}_{2}:=
    P_{2}(F_{0}(s,\mu)+\varepsilon
    F_{1}(s,\mu,\boldsymbol{\varPhi}^*,\varepsilon)),\\
    g_{3}(s,\mu,\boldsymbol{\varPhi}^*,\varepsilon)
    :=\boldsymbol{\varPhi}^*+\mathcal{L}_{X_0}^{-1}(I-P)(F_{0}(s,\mu)+\varepsilon
    F_{1}(s, \mu, \boldsymbol{\varPhi}^*,\varepsilon)),
\end{cases}
\end{equation}
for any $(s,\mu, \boldsymbol{\varPhi}^*,\varepsilon)
\in \mathbb{R}^{2}\times X_0\times[0,\varepsilon_0]$ with small $\varepsilon_0>0$,
then \eqref{2.21} is reduced to the system
\begin{equation}\label{G=0}
    0=G(s,\mu,\boldsymbol{\varPhi}^*,\varepsilon):=
    \left[\begin{array}{c}
    g_{1}(s, \mu, \boldsymbol{\varPhi}^*,\varepsilon )\\g_{2}(s, \mu, \boldsymbol{\varPhi^*},\varepsilon )\\g_{3}(s,\mu,\boldsymbol{\varPhi}^*,\varepsilon )\\\end{array}\right].
\end{equation}

\subsection{The limiting system of \eqref{G=0} as $\varepsilon\to 0$ }
To construct the branch of solutions of
\eqref{G=0} with small $\varepsilon >0$,
we set $\varepsilon = 0$ in \eqref{gcomp} and \eqref{G=0} to
introduce the following limiting system
\begin{equation}\label{Gto0}
G(s,\mu,\boldsymbol{\varPhi}^*, 0 )=\left[
\begin{array}{l}
g_{1}^{0}(s,\mu)\\
g_{2}^{0}(s,\mu)\\
\boldsymbol{\varPhi}^*+
\mathcal{L}_{X_0}^{-1}(I-P)F_{0}(s,\mu)
\end{array}
\right]=
\left[
\begin{array}{l}
0\\
0\\
0
\end{array}
\right],
\end{equation}
where
$g_{1}^{0}(s,\mu)\boldsymbol{e}_{1}:=P_{1}F_{0}(s,\mu)$
and
$g_{2}^{0}(s,\mu)\boldsymbol{e}_{2}:=P_{2}F_{0}(s,\mu)$.

It is noted that $g^{0}_{1}(s, \mu )$ is independent of $s$, then
we denote $g^{0}_{1}(s,\mu )$ by $g^{0}_{1}(\mu )$.
Indeed,
from \eqref{st}, \eqref{P1P2} and \eqref{2.22},
one can see that
\begin{equation}\label{g1def}
\begin{split}
g_{1}^{0}(\mu)&=
\langle F_{0}(s,\mu),\boldsymbol{e}_{1}^{*}\rangle=
\dfrac{1}{|\Omega |}\displaystyle
\int_{\Omega }{f}^0_{1}(\mu)\,\dx\\[2mm]
&=\dfrac{a_1a_2}{b_{2}d_1|\Omega |}
\displaystyle\int_{\Omega}
\left(\frac{1}{1+\mu\varphi_{j}(x)}-\frac{B}{A}\frac{1}{(1+\mu\varphi_{j}(x))^2}\right)\dx,
\end{split}
\end{equation}
where we use the notation $A = \frac{a_{1}}{a_{2}}$ and $B= \frac{b_{1}}{b_{2}}$,
which make the subsequent calculations somewhat more transparent.
Moreover, our results concern the case $A > B$.
Similarly we obtain the expression of $g^{0}_{2}(s, \mu )$ as follows:
\begin{equation*}
\begin{split}
g_{2}^{0}(s,\mu)&=\langle F_{0}(s,\mu),\boldsymbol{e}_{2}^{*}\rangle
=-\biggl(\beta+\frac{b_{2}}{a_{2}}\biggr)
\displaystyle
\int_{\Omega }f^0_{1}(\mu)\varphi_j(x)\dx+\displaystyle
\int_{\Omega }f^0_{2}(s,\mu)\varphi_j(x)\dx\\
=&
-\biggl(\beta+\frac{b_{2}}{a_{2}}\biggr)
\frac{a_{2}}{b_{2}d_{1}}
\displaystyle\int_{\Omega}
\left[\frac{a_{1}\varphi_{j}(x)}{1+\mu\varphi_{j}(x)}
-\frac{a_{2}b_{1}\varphi_{j}(x)}{b_{2}(1+\mu\varphi_{j}(x))^{2}}\right]\dx\\
&+s\lambda^{2}_{j}\frac{b_{2}\mu}{a^{2}_{2}}+\frac{\lambda_{j}}{b_{2}}(a_{2}\beta+b_{2})
\displaystyle\int_{\Omega}\frac{\varphi_{j}(x)}{1+\mu\varphi_{j}(x)}\dx
\end{split}
\end{equation*}
\begin{equation}\label{g2def}
\begin{split}
=&s\mu\lambda^{2}_{j}\frac{b_{2}}{a^{2}_{2}}+
\frac{(a_{2}\beta+b_{2})a_1}{b_{2}\mu d_1}\left(-|\Omega|+
\dfrac{B}{A}
\displaystyle\int_{\Omega}\frac{\dx}{1+\mu\varphi_{j}(x)}\right) \\ &+\frac{(a_{2}\beta+b_{2})}{b_{2}\mu}\lambda_j\left(|\Omega|-\displaystyle
\int_{\Omega}\frac{\dx}{1+\mu\varphi_{j}(x)}\right)\\
&+\frac{(a_{2}\beta+b_{2})a_1}{b_{2}\mu d_1}\left[\displaystyle\int_{\Omega}\frac{\dx}{1+\mu\varphi_{j}(x)}-
\dfrac{B}{A}
\displaystyle\int_{\Omega}\frac{\dx}{(1+\mu\varphi_{j}(x))^2}
\right],
\end{split}
\end{equation}
where the last equality comes from
$$
\displaystyle\int_{\Omega}\dfrac{\varphi_{j}(x)}{1+\mu\varphi_{j}}\dx=
\dfrac{1}{\mu}\biggl(|\Omega |-
\displaystyle\int_{\Omega }\dfrac{\dx}{1+\mu\varphi_{j}(x)}\biggr),
$$
and
$$
\displaystyle\int_{\Omega }
\dfrac{\varphi_{j}(x)}{(1+\mu\varphi_{j})^{2}}\dx=
\dfrac{1}{\mu}\biggl\{
\displaystyle\int_{\Omega }\dfrac{\dx}{1+\mu\varphi_{j}(x)}-
\displaystyle\int_{\Omega }\dfrac{\dx}
{(1+\mu\varphi_{j}(x))^{2}}\biggr\}.
$$
Following an idea proposed by Lou, Ni and Yotsutani \cite{LNY2},
we introduce the function
\begin{equation}\label{21graph}
    h_{j}(\mu ):=\dfrac{\int_{\Omega}\frac{\dx}{(1+\mu\varphi_{j}(x))^{2}}}{\int_{\Omega}\frac{\dx}{1+\mu\varphi_{j}(x)}}
    \ \ \mbox{for}\ \ -\dfrac{1}{\max_{\overline{\Omega}}\varphi_{j}}<\mu<-\dfrac{1}{\min_{\overline{\Omega}}\varphi_{j
    }}.
\end{equation}
This function $h_{j}(\mu )$
will play an important role in constructing the set
of solutions of \eqref{Gto0}.
In
\cite{LNY2},
they obtained the following profile of $h_{j}(\mu )$
to construct a branch of solutions of \eqref{sSKT} perturbed by
that of solutions of the 1st shadow system \eqref{LNlim1}.
\begin{lem}[\cite{LNY2}]\label{stlem}
Assume that $\lambda_j>0$ is simple, let $m_{j}:=-\dfrac{1}{\max\limits_{\overline{\Omega}}\varphi_{j}}<0$
and $M_{j}:=-\dfrac{1}{\min\limits_{\overline{\Omega}}\varphi_{j}}>0$,
then $h_{j}(0)=1$, $h_j^{\prime}(0)=0$
and
$\mu h_{j}^{\prime}(\mu)>0$ for $\mu\in (m_{j},M_{j})\setminus
\{0\}$.
Furthermore if $N\le 4$, then
$\lim\limits_{\mu\downarrow m_{j}}h_{j}(\mu )=
\lim\limits_{\mu\uparrow M_{j}}h_{j}(\mu )=+\infty$.
\end{lem}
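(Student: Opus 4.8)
The plan is to express everything through the moment integrals
\[
J_{k}(\mu):=\int_{\Omega}(1+\mu\varphi_{j})^{-k}\,\dx,
\]
so that $h_{j}=J_{2}/J_{1}$, and to lean on the single structural fact that $\varphi_{j}$ is $L^{2}$-orthogonal to the constants, i.e. $\int_{\Omega}\varphi_{j}\,\dx=0$ (because $\lambda_{j}>0$, so $\varphi_{j}\perp\varphi_{0}$). The values at $\mu=0$ are immediate: $J_{k}(0)=|\Omega|$ gives $h_{j}(0)=1$. For the first derivative, differentiating under the integral sign gives $J_{1}'=-\int_{\Omega}\varphi_{j}(1+\mu\varphi_{j})^{-2}\,\dx$ and $J_{2}'=-2\int_{\Omega}\varphi_{j}(1+\mu\varphi_{j})^{-3}\,\dx$, both of which reduce at $\mu=0$ to multiples of $\int_{\Omega}\varphi_{j}\,\dx=0$; since $h_{j}'=(J_{2}'J_{1}-J_{2}J_{1}')/J_{1}^{2}$, this yields $h_{j}'(0)=0$.

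For the sign of $\mu h_{j}'(\mu)$ I would argue through the logarithmic derivative. Writing $\mu\varphi_{j}=(1+\mu\varphi_{j})-1$ gives the reduction identities $\mu\int_{\Omega}\varphi_{j}(1+\mu\varphi_{j})^{-k}\,\dx=J_{k-1}-J_{k}$, whence $\mu J_{1}'=J_{2}-J_{1}$ and $\mu J_{2}'=2J_{3}-2J_{2}$, and therefore
\[
\mu\,\frac{h_{j}'(\mu)}{h_{j}(\mu)}
=\frac{\mu J_{2}'}{J_{2}}-\frac{\mu J_{1}'}{J_{1}}
=\frac{2J_{3}}{J_{2}}-\frac{J_{2}}{J_{1}}-1.
\]
Two applications of Cauchy--Schwarz then finish the job. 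First, $J_{2}^{2}=\bigl(\int_{\Omega}(1+\mu\varphi_{j})^{-1/2}(1+\mu\varphi_{j})^{-3/2}\,\dx\bigr)^{2}\le J_{1}J_{3}$, so $J_{3}/J_{2}\ge J_{2}/J_{1}$; second, Jensen's inequality for the convex map $s\mapsto 1/s$ together with $\int_{\Omega}\varphi_{j}\,\dx=0$ gives $J_{1}\ge|\Omega|$, while $J_{1}^{2}\le|\Omega|\,J_{2}$ gives $h_{j}=J_{2}/J_{1}\ge J_{1}/|\Omega|\ge 1$. Combining these, $\mu\,h_{j}'/h_{j}\ge 2(J_{2}/J_{1})-(J_{2}/J_{1})-1=h_{j}-1\ge 0$, and since $\varphi_{j}$ is nonconstant the first Cauchy--Schwarz step is strict for $\mu\ne 0$; as $h_{j}>0$ this yields $\mu h_{j}'(\mu)>0$ on $(m_{j},M_{j})\setminus\{0\}$.

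The main obstacle is the boundary blow-up as $\mu\downarrow m_{j}$ or $\mu\uparrow M_{j}$, where the weight $1+\mu\varphi_{j}$ degenerates to zero at the points at which $\varphi_{j}$ attains its maximum or minimum. Here I would localize near such an extremal point $x_{0}$ and set $\eta:=1+\mu\max_{\overline{\Omega}}\varphi_{j}\downarrow 0$; assuming a non-degenerate interior extremum, $1+\mu\varphi_{j}(x)\approx\eta+c|x-x_{0}|^{2}$, and comparing
\[
J_{1}\sim\int_{0}^{R}\frac{r^{N-1}\,dr}{\eta+cr^{2}},\qquad
J_{2}\sim\int_{0}^{R}\frac{r^{N-1}\,dr}{(\eta+cr^{2})^{2}}
\]
shows that $J_{2}/J_{1}\to+\infty$ precisely when $N\le 4$ (for $N\ge 5$ both integrals stay bounded and the ratio does not diverge), which is exactly the stated dimensional restriction. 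The delicate point is that this local model must be justified for a genuine Neumann eigenfunction, whose extrema may lie on $\partial\Omega$ or be degenerate critical points; handling these cases rigorously — rather than the generic non-degenerate picture sketched above — is where the real work lies, and I would either carry out the blow-up analysis case by case or simply invoke the corresponding estimate established in \cite{LNY2}.
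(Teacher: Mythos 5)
The paper contains no proof of this lemma to compare against: it is imported verbatim from \cite{LNY2} and used as a black box, so your fallback option of ``invoking the corresponding estimate established in \cite{LNY2}'' would make your argument do exactly what the paper already does. Judged on its own merits, your proof of the first three assertions is correct and complete: $h_j(0)=1$ and $h_j'(0)=0$ follow as you say from $\int_\Omega\varphi_j\,\dx=0$, and the monotonicity argument --- the identities $\mu J_1'=J_2-J_1$, $\mu J_2'=2J_3-2J_2$, hence $\mu h_j'/h_j=2J_3/J_2-J_2/J_1-1$, combined with $J_2^2\le J_1J_3$ (strict for $\mu\ne 0$ because $1+\mu\varphi_j$ is then nonconstant), $J_1^2\le|\Omega|J_2$ and $J_1\ge|\Omega|$ --- is clean and fully rigorous on the open interval $(m_j,M_j)$, where all the $J_k$ are finite.

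The genuine gap is the one you flag yourself: the blow-up assertion. The two-sided asymptotics $J_1\sim\int_0^R r^{N-1}(\eta+cr^2)^{-1}\,{\rm d}r$ and $J_2\sim\int_0^R r^{N-1}(\eta+cr^2)^{-2}\,{\rm d}r$ presuppose that the relevant extremum of $\varphi_j$ is interior and non-degenerate, and neither holds in general for a Neumann eigenfunction. But the gap closes with a tool you already derived: from $J_1^2\le|\Omega|J_2$ you get $h_j\ge\sqrt{J_2/|\Omega|}$, so it suffices to prove $J_2\to+\infty$, and for that only a \emph{one-sided upper} bound on the weight is needed --- no lower bound on the growth of $J_1$, hence no non-degeneracy. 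If $x_0\in\overline{\Omega}$ is a maximum point of $\varphi_j$, then $\nabla\varphi_j(x_0)=0$: in the interior this is standard, while at a boundary point the tangential gradient vanishes by optimality and the normal derivative vanishes by the Neumann condition. Since $\varphi_j\in C^2(\overline{\Omega})$ (smooth boundary), Taylor's theorem gives $\max_{\overline{\Omega}}\varphi_j-\varphi_j(x)\le C|x-x_0|^2$ near $x_0$, whatever the Hessian does. Hence, for $m_j<\mu<0$ and $\eta:=1+\mu\max_{\overline{\Omega}}\varphi_j\downarrow 0$, writing $C'=|m_j|C$,
\[
J_2\;\ge\;\int_{\Omega\cap B_R(x_0)}\frac{\dx}{\bigl(\eta+C'|x-x_0|^2\bigr)^2}
\;\ge\;c\int_0^R\frac{r^{N-1}\,{\rm d}r}{\bigl(\eta+C'r^2\bigr)^2}\;\longrightarrow\;+\infty ,
\]
exactly when $N\le 4$: the interior cone condition for the smooth domain $\Omega$ gives the second inequality, and the last integral is of order $\eta^{N/2-2}$ for $N\le 3$ and of order $\log(1/\eta)$ for $N=4$. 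The case $\mu\uparrow M_j$ is identical at a minimum point. With this substitution your proof becomes self-contained, and neither a case-by-case blow-up analysis nor any appeal to \cite{LNY2} is needed.
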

By virtue of Lemma \ref{stlem}, we have
\begin{prop}\label{prop1}
Suppose that $N\le 4$, $A>B$ and $\lambda_j>0$ is simple.
Then there exist
$\mu^{\pm}_{j,0}$ satisfying $m_{j}<\mu^{-}_{j,0}<0<\mu^{+}_{j,0}<M_{j}$ and
$$
h_j(\mu^{\pm}_{j,0})=\frac{A}{B}=\frac{a_1b_2}{a_2b_1}>1,
$$
thus $g^{0}_{1}( \mu^{\pm}_{j,0})=0$.
\end{prop}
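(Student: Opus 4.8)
The plan is to reduce the equation $g_1^0(\mu)=0$ to a level-set equation for the function $h_j$, and then read off its two solutions from the ``U-shaped'' profile of $h_j$ supplied by Lemma \ref{stlem}.

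First I would rewrite the condition $g_1^0(\mu)=0$ using the explicit formula \eqref{g1def}. Since the constant prefactor $\frac{a_1a_2}{b_2 d_1|\Omega|}$ is positive, $g_1^0(\mu)=0$ is equivalent to
\[
\int_\Omega\frac{\dx}{1+\mu\varphi_j(x)}=\frac{B}{A}\int_\Omega\frac{\dx}{(1+\mu\varphi_j(x))^2},
\]
that is, to $h_j(\mu)=A/B$. I would note in passing that for $\mu\in(m_j,M_j)$ one has $1+\mu\varphi_j(x)>0$ on $\overline\Omega$ --- this is exactly how the endpoints $m_j$ and $M_j$ are defined in Lemma \ref{stlem} --- so both integrals, and hence $h_j(\mu)$, are well defined and finite on this interval, and the displayed rearrangement is legitimate because the left-hand integrand is strictly positive.

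Next I would locate the level $A/B$ on the graph of $h_j$. Since $A>B>0$ by hypothesis, we have $A/B>1$. By Lemma \ref{stlem}, $h_j(0)=1$ and $\mu h_j'(\mu)>0$ on $(m_j,M_j)\setminus\{0\}$, so $h_j$ is strictly decreasing on $(m_j,0)$ and strictly increasing on $(0,M_j)$, with global minimum value $1$ attained only at $\mu=0$; moreover, because $N\le 4$, $h_j(\mu)\to+\infty$ as $\mu\downarrow m_j$ and as $\mu\uparrow M_j$. Applying the Intermediate Value Theorem separately on $(0,M_j)$ and on $(m_j,0)$, with $1<A/B<+\infty$, would then yield points $\mu^+_{j,0}\in(0,M_j)$ and $\mu^-_{j,0}\in(m_j,0)$ satisfying $h_j(\mu^\pm_{j,0})=A/B$, and strict monotonicity on each branch gives uniqueness there. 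Combining with the first step, $h_j(\mu^\pm_{j,0})=A/B$ immediately yields $g_1^0(\mu^\pm_{j,0})=0$, which is the assertion.

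There is no serious obstacle at this stage: the entire difficulty has already been absorbed into Lemma \ref{stlem} (taken from \cite{LNY2}), which for $N\le 4$ supplies both the strict monotonicity of $h_j$ on each side of $0$ and its blow-up at the endpoints $m_j,M_j$. The only point worth emphasizing is the role of the dimension restriction $N\le 4$: it is precisely what guarantees $\lim_{\mu\downarrow m_j}h_j(\mu)=\lim_{\mu\uparrow M_j}h_j(\mu)=+\infty$, hence the surjectivity of each monotone branch onto $(1,+\infty)$; without it the level $A/B$ could fail to be attained on one of the two branches.
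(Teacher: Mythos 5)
Your proof is correct and follows essentially the same route as the paper: the paper derives Proposition \ref{prop1} directly from Lemma \ref{stlem}, using the identity $g_1^0(\mu)=0 \Leftrightarrow h_j(\mu)=A/B$ from \eqref{g1def} together with the U-shaped profile of $h_j$ (minimum value $1$ at $\mu=0$, strict monotonicity on each side, blow-up at $m_j$ and $M_j$ when $N\le 4$) and the Intermediate Value Theorem on each branch. Your additional remarks on well-definedness of $h_j$ on $(m_j,M_j)$ and on the role of $N\le 4$ are accurate and consistent with the paper's setup.
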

Further we can prove the following inequalities:
\begin{prop}\label{prop2}
Suppose that $N\le 4$, $A>B$ and $\lambda_j>0$ is simple, and let $\mu^{\pm}_{j,0}$ be selected as in Proposition \ref{prop1}, i.e.
 $$
\displaystyle\int_{\Omega}\frac{\dx}{1+\mu^{\pm}_{j,0}\varphi_{j}(x)}=
\frac{B}{A}\displaystyle\int_{\Omega}\frac{\dx}{(1+\mu^{\pm}_{j,0}\varphi_{j}(x))^{2}},
$$
then
\begin{equation*}
\begin{split}
&(i)\ \ \displaystyle\int_{\Omega}\frac{\dx}{1+\mu^{\pm}_{j,0}\varphi_{j}(x)}>|\Omega|,\\
&(ii)\ \ \displaystyle\int_{\Omega}\frac{\dx}{1+\mu^{\pm}_{j,0}\varphi_{j}(x)}\le \frac{A}{B}|\Omega|,\\
&(iii)\ \ \frac{A}{B}\displaystyle\int_{\Omega}\frac{\dx}{(1+\mu^{\pm}_{j,0}\varphi_{j}(x))^{2}}
\leq \displaystyle\int_{\Omega}\frac{\dx}{(1+\mu^{\pm}_{j,0}\varphi_{j}(x))^{3}}.
\end{split}
\end{equation*}
\end{prop}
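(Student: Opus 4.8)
The plan is to reduce all three inequalities to two classical integral inequalities—Jensen's inequality and the Cauchy--Schwarz inequality—once the defining relation of $\mu^{\pm}_{j,0}$ is rewritten in a convenient moment form. First I would fix $\mu=\mu^{\pm}_{j,0}$ and, since $m_{j}<\mu^{-}_{j,0}<0<\mu^{+}_{j,0}<M_{j}$ by Proposition \ref{prop1}, note that $1+\mu\varphi_{j}(x)>0$ on $\overline{\Omega}$, so the abbreviations
\[
I_{k}:=\int_{\Omega}\frac{\dx}{(1+\mu\varphi_{j}(x))^{k}}\qquad(k=0,1,2,3),\qquad I_{0}=|\Omega|,
\]
are all well defined and positive. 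With this notation the selection condition in Proposition \ref{prop1} (equivalently $h_{j}(\mu)=A/B$) reads $A\,I_{1}=B\,I_{2}$, that is $I_{2}/I_{1}=A/B$; this identity is what converts each target inequality into a pure inequality among the $I_{k}$.

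For (i) I would use that $\varphi_{j}$ has mean zero: integrating $-\Delta\varphi_{j}=\lambda_{j}\varphi_{j}$ over $\Omega$ with the Neumann condition and $\lambda_{j}>0$ gives $\int_{\Omega}\varphi_{j}\,\dx=0$. Then Jensen's inequality for the strictly convex map $t\mapsto 1/t$ on $(0,\infty)$ yields
\[
\frac{I_{1}}{|\Omega|}=\frac{1}{|\Omega|}\int_{\Omega}\frac{\dx}{1+\mu\varphi_{j}}
\ \ge\ \frac{1}{\frac{1}{|\Omega|}\int_{\Omega}(1+\mu\varphi_{j})\,\dx}=1,
\]
because the denominator on the right equals $1+\mu|\Omega|^{-1}\int_{\Omega}\varphi_{j}=1$; since $\varphi_{j}$ is nonconstant the inequality is strict, giving $I_{1}>|\Omega|$.

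Claims (ii) and (iii) I would obtain from Cauchy--Schwarz applied to suitably split integrands. Pairing $1$ with $(1+\mu\varphi_{j})^{-1}$ gives $I_{1}^{2}\le I_{0}I_{2}=|\Omega|I_{2}$, and dividing by $I_{1}$ and inserting $I_{2}/I_{1}=A/B$ produces $I_{1}\le (A/B)|\Omega|$, which is (ii). Pairing $(1+\mu\varphi_{j})^{-1/2}$ with $(1+\mu\varphi_{j})^{-3/2}$ gives $I_{2}^{2}\le I_{1}I_{3}$, and dividing by $I_{1}$ together with $I_{2}/I_{1}=A/B$ yields $(A/B)I_{2}=I_{2}^{2}/I_{1}\le I_{3}$, which is (iii).

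I do not expect a serious obstacle here: the whole argument is structural once the constraint is written as $I_{2}/I_{1}=A/B$. The only point requiring care is the bookkeeping that turns each Cauchy--Schwarz estimate into the asserted form by exactly one division by $I_{1}$ and one substitution of the ratio $A/B$; and, for (i), remembering that the eigenfunction orthogonality $\int_{\Omega}\varphi_{j}\,\dx=0$ is precisely what makes the Jensen bound equal to $1$ rather than to some $\mu$-dependent quantity. Strictness in (i) follows from the nonconstancy of $\varphi_{j}$, while (ii) and (iii) are stated with $\le$ and need no strictness discussion.
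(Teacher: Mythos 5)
Your proof is correct and takes essentially the same approach as the paper: for (ii) and (iii) the paper uses exactly your two Cauchy--Schwarz pairings combined with the constraint $\int_\Omega(1+\mu\varphi_j)^{-1}\,\dx=\frac{B}{A}\int_\Omega(1+\mu\varphi_j)^{-2}\,\dx$. The only difference is in (i), where the paper shows that $I(\mu)=\int_\Omega(1+\mu\varphi_j)^{-1}\,\dx$ satisfies $I'(0)=0$ and $I''(\mu)>0$ on $(m_j,M_j)$, hence $I(\mu)>I(0)=|\Omega|$ for $\mu\neq 0$, while you apply Jensen's inequality pointwise at the fixed value $\mu=\mu^{\pm}_{j,0}$; both arguments rest on the same convexity of $t\mapsto 1/t$ and on $\int_\Omega\varphi_j\,\dx=0$ (and your strictness claim is justified since Proposition \ref{prop1} guarantees $\mu^{\pm}_{j,0}\neq 0$), so this is only a cosmetic variation.
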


\begin{proof}
(i) Let $I(\mu):=\displaystyle\int_{\Omega}\frac{\dx}{1+\mu\varphi_{j}}$,
we observe that
$$
I'(\mu )=-\displaystyle\int_{\Omega }
\dfrac{\varphi_{j}}{(1+\mu\varphi_{j})^{2}}\dx,
\quad\mbox{especially,}\quad I'(0)=0,$$
and moreover,
$$
I''(\mu )=2\displaystyle\int_{\Omega }
\dfrac{\varphi_{j}^{2}}{(1+\mu\varphi_{j})^{3}}\dx>0
\quad\mbox{for any}\ \mu\in (m_{j}, M_{j}).
$$
Together with $I(0)=|\Omega |>0$,
we see that
$I(\mu )> I(0)=|\Omega |$ for any
$\mu\in (m_{j}, M_{j})$.\\

(ii) Note that
\begin{equation*}
\begin{split}
\int_{\Omega}\dfrac{\dx}{1+\mu^{\pm}_{j,0}\varphi_{j}}
\le& |\Omega |^{\frac{1}{2}}
\biggl(\int_{\Omega}\dfrac{\dx}{(1+\mu^{\pm}_{j,0}\varphi_{j})^{2}}\biggl)^{\frac{1}{2}}\\
=& \sqrt{\frac{A}{B}}|\Omega|^{\frac{1}{2}}
\biggl(\int_{\Omega}\dfrac{\dx}{1+\mu^{\pm}_{j,0}\varphi_{j}}\biggl)^{\frac{1}{2}},
\end{split}
\end{equation*}
which proves (ii).\\

(iii) Using the fact
\begin{equation*}
\int_{\Omega}\frac{\dx}{(1+\mu^{\pm}_{j,0}\varphi_{j})^{2}}
=\frac{A}{B} \int_{\Omega}\frac{\dx}{1+\mu^{\pm}_{j,0}\varphi_{j}},
\end{equation*}
then
\begin{equation*}
\begin{split}
\biggl(\int_{\Omega}\frac{\dx}{(1+\mu^{\pm}_{j,0}\varphi_{j})^{2}}\biggr)^{2}
&\le  \int_{\Omega}\frac{\dx}{1+\mu^{\pm}_{j,0}\varphi_{j}}
\int_{\Omega}\frac{\dx}{(1+\mu^{\pm}_{j,0}\varphi_{j})^{3}}\\
&=\frac{B}{A}\int_{\Omega}\frac{\dx}{(1+\mu^{\pm}_{j,0}\varphi_{j})^{2}}
\int_{\Omega}\frac{\dx}{(1+\mu^{\pm}_{j,0}\varphi_{j})^{3}};
\end{split}
\end{equation*}
one can prove $(iii).$
\end{proof}

\begin{lem}\label{limeqlem}
Suppose that $N\le 4$ and $A>B$, and let $\mu^{\pm}_{j,0}$ be selected as in Proposition \ref{prop1}, which satisfies  $m_{j}<\mu^{-}_{j,0}<0<\mu^{+}_{j,0}<M_{j}$ and
$$
h_j(\mu^{\pm}_{j,0})=\frac{A}{B}=\frac{a_1b_2}{a_2b_1}>1.
$$
Then the following properties hold:\\
(i) The equations $g^{0}_{1}(\mu)=g^{0}_{2}(s,\mu )=0$ admit roots $\mu=\mu^{\pm}_{j,0}$ and
\begin{equation}\label{sexp}
\begin{split}s=s^{\pm}_{j,0}:=
\dfrac{a_{1}a^{2}_{2}(a_2\beta+b_{2})}{b^{2}_{2}d_{1}(\mu^{\pm}_{j,0})^{2}\lambda^{2}_{j}}
\bigg(|\Omega|-\frac{B}{A}\int_{\Omega}\frac{\dx}{1+\mu^{\pm}_{j,0}\varphi_{j}(x)}\bigg)\\
+\dfrac{a^{2}_{2}(a_2\beta+b_{2})}{\lambda_{j}b^{2}_{2}(\mu^{\pm}_{j,0})^{2}}
\bigg(\int_{\Omega}\frac{\dx}{1+\mu^{\pm}_{j,0}\varphi_{j}(x)}-|\Omega|\bigg)>0.
\end{split}
\end{equation}
(ii) All solutions of \eqref{Gto0} are represented as
\begin{equation}\label{sollimeq}
(s,\mu, \boldsymbol{\varPhi}^*)=
(s^{\pm}_{j,0}, \mu^{\pm}_{j,0}, -\mathcal{L}_{X_0}^{-1}(I-P)F_{0}
(s^{\pm}_{j,0},\mu^{\pm}_{j,0})).
\end{equation}
\end{lem}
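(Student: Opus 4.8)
The plan is to solve the three coupled equations comprising the limiting system \eqref{Gto0} in an order that decouples them. The last equation, $\boldsymbol{\varPhi}^*+\mathcal{L}_{X_0}^{-1}(I-P)F_0(s,\mu)=0$, determines $\boldsymbol{\varPhi}^*$ uniquely once the scalar pair $(s,\mu)$ is fixed, since $\mathcal{L}_{X_0}^{-1}$ is the inverse of the isomorphism $\mathcal{L}_{X_0}$. Hence the entire problem collapses to the two scalar equations $g^0_1(\mu)=0$ and $g^0_2(s,\mu)=0$, which I would treat first.

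For part (i), the equation $g^0_1(\mu)=0$ is already resolved by Proposition \ref{prop1}: by the expression \eqref{g1def}, $g^0_1(\mu)=0$ is equivalent to $h_j(\mu)=A/B$, which holds exactly at $\mu=\mu^{\pm}_{j,0}$. The crucial observation is that at these values the defining identity $\int_\Omega(1+\mu^{\pm}_{j,0}\varphi_j)^{-1}\,\dx=(B/A)\int_\Omega(1+\mu^{\pm}_{j,0}\varphi_j)^{-2}\,\dx$ makes the final bracketed term in \eqref{g2def} vanish identically. What survives of $g^0_2(s,\mu^{\pm}_{j,0})=0$ is then affine in $s$ with leading coefficient $\mu^{\pm}_{j,0}\lambda_j^2 b_2/a_2^2\ne 0$ (recall $\mu^{\pm}_{j,0}\ne 0$); solving this linear equation for $s$ produces precisely the formula \eqref{sexp}.

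To establish $s^{\pm}_{j,0}>0$, I would split \eqref{sexp} into its two summands. Each carries a manifestly positive prefactor proportional to $(\mu^{\pm}_{j,0})^{-2}$, so the sign of each summand is dictated by its remaining integral factor. Proposition \ref{prop2}(ii) gives $\int_\Omega(1+\mu^{\pm}_{j,0}\varphi_j)^{-1}\,\dx\le(A/B)|\Omega|$, whence $|\Omega|-(B/A)\int_\Omega(1+\mu^{\pm}_{j,0}\varphi_j)^{-1}\,\dx\ge 0$ and the first summand is nonnegative; Proposition \ref{prop2}(i) gives $\int_\Omega(1+\mu^{\pm}_{j,0}\varphi_j)^{-1}\,\dx>|\Omega|$, so the second summand is strictly positive. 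Adding the two gives $s^{\pm}_{j,0}>0$.

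For part (ii), i.e.\ that these are all the solutions, I would invoke the profile of $h_j$ recorded in Lemma \ref{stlem}. Since $h_j(0)=1$, $\mu h_j'(\mu)>0$ on $(m_j,M_j)\setminus\{0\}$, and $h_j\to+\infty$ at both endpoints, $h_j$ is strictly decreasing on $(m_j,0)$ and strictly increasing on $(0,M_j)$ with global minimum value $1$. Because $A/B>1$, the level set $\{h_j=A/B\}$ consists of exactly the two points $\mu^-_{j,0}\in(m_j,0)$ and $\mu^+_{j,0}\in(0,M_j)$, so $g^0_1(\mu)=0$ has precisely these two roots. For each such root the equation $g^0_2(s,\cdot)=0$ is again affine in $s$ with nonzero leading coefficient and therefore has the unique solution $s^{\pm}_{j,0}$; the third equation then assigns the corresponding $\boldsymbol{\varPhi}^*$. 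This exhausts the solution set and yields the representation \eqref{sollimeq}. The step I expect to be most delicate is the exact cancellation that reduces $g^0_2$ to an affine equation in $s$, together with the careful sign bookkeeping in \eqref{sexp}; once these are in hand, everything else follows routinely from Propositions \ref{prop1}, \ref{prop2} and Lemma \ref{stlem}.
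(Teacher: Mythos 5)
Your proposal is correct and follows essentially the same route as the paper's proof: both exploit the identity $h_j(\mu^{\pm}_{j,0})=A/B$ to kill the last bracket in \eqref{g2def}, solve the resulting equation (affine in $s$) to obtain \eqref{sexp}, deduce positivity from Proposition \ref{prop2}(i)--(ii), and then let the third equation of \eqref{Gto0} determine $\boldsymbol{\varPhi}^*$. The only difference is that you spell out, via the monotonicity profile in Lemma \ref{stlem}, why $\mu^{\pm}_{j,0}$ are the \emph{only} roots of $g^0_1(\mu)=0$, a point the paper leaves implicit in its citation of Proposition \ref{prop1}.
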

\begin{proof}
By \eqref{g1def} and Proposition \ref{prop1}, $g^{0}_{1}(\mu)=0$ has two roots $\mu=\mu^{\pm}_{j,0}$.
Setting $\mu=\mu^{\pm}_{j,0}$ in \eqref{g2def} , by simple computation we see that
$g_{2}^{0}(s,\mu^{\pm}_{j,0})=0$ holds if and only if
\begin{equation*}
\begin{split}
s=s^\pm_{j,0}=\frac{(a_{2}\beta+b_{2})a_1a_2^2}{d_1b^2_{2}\lambda^2_j(\mu^\pm_{j,0})^2} \left(|\Omega|-\frac{B}{A}\displaystyle\int_{\Omega}\frac{\dx}{1+ \mu^\pm_{j,0} \varphi_{j}(x)}\right) \\ +\frac{(a_{2}\beta+b_{2})a^2_2}{\lambda_jb^2_{2}(\mu^\pm_{j,0})^2}\left(\displaystyle\int_{\Omega}\frac{\dx}{1
+\mu^\pm_{j,0}\varphi_{j}(x)}-|\Omega|\right),
\end{split}
\end{equation*}
which with (i) and (ii) of Proposition \ref{prop2} implies that $s^{\pm}_{j,0}>0$.

Substituting $(s,\mu)=(s^{\pm}_{j,0}, \mu^{\pm}_{j,0})$
into the equations of $\boldsymbol{\varPhi}^*$ component of \eqref{Gto0}, we
obtain \eqref{sollimeq}.
\end{proof}

\subsection{Proofs of Theorems \ref{LSprop} and \ref{blthm}}
\begin{proof}[{\bf Proof of Theorem \ref{LSprop}}]
As a perturbation of solutions of the limiting system \eqref{Gto0},
we aim to construct the set of solutions of \eqref{G=0} for small $\varepsilon>0$, by applying the Implicit Function Theorem.
Note that  Lemma \ref{limeqlem} implies that
\begin{equation}\label{G0}
    G(s^{\pm}_{j,0}, \mu^{\pm}_{j,0},
    \boldsymbol{\varPhi}^{*\pm}_{j,0}, 0)=0,
\end{equation}
with $\boldsymbol{\varPhi}^{*\pm}_{j,0}
=\mathcal{L}_{X_0}^{-1}(I-P)F_{0}(s^{\pm}_{j,0}, \mu^{\pm}_{j,0})$.
To prove the existence of solutions in the form of \eqref{ppe} satisfying
$G(s,\mu,\boldsymbol{\varPhi}^*,\varepsilon)=0$ for small $\varepsilon>0$, it remains to
 check the following non-degeneracy of $G(s,\mu,\boldsymbol{\varPhi}^*,\varepsilon)=0$ at $(s^{\pm}_{j,0},\mu^{\pm}_{j,0},\boldsymbol{\varPhi}^{*\pm}_{j,0},0)$:
\begin{lem}\label{nondeglem}
Let $G_{(s,\mu,\boldsymbol{\varPhi}^*)}
(s^{\pm}_{j,0}, \mu^{\pm}_{j,0}, \boldsymbol{\varPhi}^{*\pm}_{j,0},0)
$
be the Fr\'echet derivative of
$G$ respect to
$(s,\mu,\boldsymbol{\varPhi}^*)$
around
$(s^{\pm}_{j,0},\mu^{\pm}_{j,0},\boldsymbol{\varPhi}^{*\pm}_{j,0},0)$.
Then $G_{(s,\mu,\boldsymbol{\varPhi}^*)}
(s^{\pm}_{j,0}, \mu^{\pm}_{j,0}, \boldsymbol{\varPhi}^{*\pm}_{j,0},0)\,:\,
\mathbb{R}^{2}\times X_0\to\mathbb{R}^{2}\times  X_0$ is invertible.
\end{lem}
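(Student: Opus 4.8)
The plan is to exploit the block structure of the Fr\'echet derivative at $\varepsilon=0$, which reduces the invertibility of an operator on $\mathbb{R}^{2}\times X_0$ to that of a $2\times 2$ matrix on $\mathbb{R}^{2}$. First I would read off the three rows of $G_{(s,\mu,\boldsymbol{\varPhi}^*)}$ at $(s^{\pm}_{j,0},\mu^{\pm}_{j,0},\boldsymbol{\varPhi}^{*\pm}_{j,0},0)$ directly from the limiting system \eqref{Gto0}. Since $g_1^{0}$ and $g_2^{0}$ carry no dependence on $\boldsymbol{\varPhi}^*$, while $g_3=\boldsymbol{\varPhi}^*+\mathcal{L}_{X_0}^{-1}(I-P)F_{0}(s,\mu)$ has derivative equal to the identity in the $\boldsymbol{\varPhi}^*$ direction, the Fr\'echet derivative takes the block lower-triangular form
\[
\begin{pmatrix}
\partial_s g_1^{0} & \partial_\mu g_1^{0} & 0\\
\partial_s g_2^{0} & \partial_\mu g_2^{0} & 0\\
\mathcal{L}_{X_0}^{-1}(I-P)\,\partial_s F_{0} & \mathcal{L}_{X_0}^{-1}(I-P)\,\partial_\mu F_{0} & I
\end{pmatrix},
\]
whose lower-right diagonal block is the identity on $X_0$ and whose upper-right $2\times1$ block vanishes. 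Hence the full operator is invertible on $\mathbb{R}^{2}\times X_0$ if and only if the upper-left $2\times2$ block is invertible on $\mathbb{R}^{2}$.

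Next I would evaluate the four entries of that $2\times2$ block. Because $g_1^{0}(\mu)$ is independent of $s$ (as observed just after \eqref{g1def}), the $(1,1)$ entry $\partial_s g_1^{0}$ vanishes, so the determinant of the block equals $-\,(g_1^{0})'(\mu^{\pm}_{j,0})\,\partial_s g_2^{0}(s^{\pm}_{j,0},\mu^{\pm}_{j,0})$. From the expression \eqref{g2def} the $s$-dependence of $g_2^{0}$ is linear, namely $g_2^{0}=s\mu\lambda_j^{2}b_2/a_2^{2}+(\text{terms free of }s)$, so that $\partial_s g_2^{0}=\mu^{\pm}_{j,0}\lambda_j^{2}b_2/a_2^{2}$, which is nonzero because $\mu^{\pm}_{j,0}\neq0$ by Proposition \ref{prop1}. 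It therefore remains only to show $(g_1^{0})'(\mu^{\pm}_{j,0})\neq0$.

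For this last step I would rewrite, using $h_j$ from \eqref{21graph} and $I(\mu):=\int_\Omega(1+\mu\varphi_j)^{-1}\dx$, the function as $g_1^{0}(\mu)=\frac{a_1a_2}{b_2d_1|\Omega|}\,I(\mu)\bigl(1-\tfrac{B}{A}h_j(\mu)\bigr)$. Differentiating and inserting the identity $1-\tfrac{B}{A}h_j(\mu^{\pm}_{j,0})=0$ from Proposition \ref{prop1}, the term carrying $I'$ drops out and
\[
(g_1^{0})'(\mu^{\pm}_{j,0})=-\frac{a_1a_2}{b_2d_1|\Omega|}\,\frac{B}{A}\,I(\mu^{\pm}_{j,0})\,h_j'(\mu^{\pm}_{j,0}).
\]
Here $I(\mu^{\pm}_{j,0})>0$ by Proposition \ref{prop2}(i), and by Lemma \ref{stlem} the sign condition $\mu h_j'(\mu)>0$ for $\mu\neq0$ forces $h_j'(\mu^{+}_{j,0})>0$ and $h_j'(\mu^{-}_{j,0})<0$; in both cases $h_j'(\mu^{\pm}_{j,0})\neq0$, so $(g_1^{0})'(\mu^{\pm}_{j,0})\neq0$. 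Consequently the $2\times2$ determinant is nonzero, the upper-left block is invertible, and the full derivative is an isomorphism of $\mathbb{R}^{2}\times X_0$.

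I expect the main obstacle to be this third step: correctly reducing the nonvanishing of $(g_1^{0})'(\mu^{\pm}_{j,0})$ to the monotonicity property $\mu h_j'(\mu)>0$ of Lemma \ref{stlem}. This is precisely where the choice $h_j(\mu^{\pm}_{j,0})=A/B$ matters as a \emph{transversal} crossing rather than a tangency: it is the geometric fact that the horizontal line at height $A/B$ meets the graph of $h_j$ transversally at $\mu^{\pm}_{j,0}$ that supplies the nondegeneracy needed for the Implicit Function Theorem in the proof of Theorem \ref{LSprop}.
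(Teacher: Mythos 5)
Your proof is correct, and its skeleton coincides with the paper's own: the same block lower-triangular form of the Fr\'echet derivative, the same reduction to invertibility of the upper-left $2\times 2$ block, and the same observations that $(g_1^0)_s=0$ and $(g_2^0)_s(s^\pm_{j,0},\mu^\pm_{j,0})=\mu^\pm_{j,0}\lambda_j^2 b_2/a_2^2\neq 0$, so that everything hinges on $(g_1^0)_\mu(\mu^\pm_{j,0})\neq 0$. Where you genuinely diverge is in that last step. The paper (display \eqref{DET}) differentiates \eqref{g1def} directly, converts the resulting integrals of $\varphi_j/(1+\mu^\pm_{j,0}\varphi_j)^k$ into integrals of $(1+\mu^\pm_{j,0}\varphi_j)^{-k}$, eliminates the first-power term via Proposition \ref{prop1}, and then invokes Proposition \ref{prop2}(iii) (a Cauchy--Schwarz consequence) together with $A>B$ to obtain the explicit positive lower bound $-(g_2^0)_s(g_1^0)_\mu\ge A\lambda_j^2 d_1^{-1}|\Omega|^{-1}\bigl(1-\tfrac{B}{A}\bigr)\int_\Omega(1+\mu^\pm_{j,0}\varphi_j)^{-2}\dx>0$. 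You instead factor $g_1^0(\mu)=\frac{a_1a_2}{b_2d_1|\Omega|}I(\mu)\bigl(1-\tfrac{B}{A}h_j(\mu)\bigr)$ and read off the nondegeneracy from the transversality of the crossing $h_j(\mu^\pm_{j,0})=A/B$, i.e.\ from the strict monotonicity $\mu h_j'(\mu)>0$ of Lemma \ref{stlem}. This is a genuine simplification: it bypasses Proposition \ref{prop2}(iii) altogether and reuses exactly the monotonicity fact that already produced $\mu^\pm_{j,0}$ in Proposition \ref{prop1}, making the nondegeneracy conceptually transparent rather than computational. What your statement of the conclusion gives up is the sign --- the paper proves the determinant is positive, not merely nonzero --- but only nonvanishing is needed in Lemma \ref{nondeglem}, and in fact your argument recovers positivity for free, since $-(g_1^0)_\mu(g_2^0)_s$ equals a positive constant times $I(\mu^\pm_{j,0})\,\mu^\pm_{j,0}h_j'(\mu^\pm_{j,0})>0$. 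One cosmetic omission: you should note, as the paper does, that the lower-left entries $\mathcal{L}_{X_0}^{-1}(I-P)(F_0)_s$ and $\mathcal{L}_{X_0}^{-1}(I-P)(F_0)_\mu$ are bounded (immediate here, since they act on a finite-dimensional space) so that the triangular reduction to the $2\times 2$ block is legitimate.
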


\begin{proof}
The Fr\'echet derivative of $G$ can be expressed as
\begin{equation}\label{Fr}
    \begin{split}
       & G_{(s,\mu,\boldsymbol{\varPhi}^*)}
       (s^{\pm}_{j,0}, \mu^{\pm}_{j,0}, \boldsymbol{\varPhi}^{*\pm}_{j,0},0)\\
=&
\left[
\begin{array}{ccc}
0&(g_{1}^{0})_{\mu}( \mu^{\pm}_{j,0}) & 0\\
(g_{2}^{0})_{s}(s^{\pm}_{j,0}, \mu^{\pm}_{j,0}) &
(g_{2}^{0})_{\mu}(s^{\pm}_{j,0}, \mu^{\pm}_{j,0}) & 0\\
\mathcal{L}_{X_0}^{-1}(I-P)(F_{0})_{s}(s^{\pm}_{j,0}, \mu^{\pm}_{j,0}) &
\mathcal{L}_{X_0}^{-1}(I-P)(F_{0})_{\mu}(s^{\pm}_{j,0}, \mu^{\pm}_{j,0}) & I
\end{array}
\right].
    \end{split}
\end{equation}
Since $\mathcal{L}_{X_0}^{-1}(I-P)\,:\,
Y\to X_{0}$
is bounded,
then
$\mathcal{L}_{X_0}^{-1}(I-P)(F_{0})_{s}(s^{\pm}_{j,0}, \mu^{\pm}_{j,0})$ and
$\mathcal{L}_{X_0}^{-1}(I-P)(F_{0})_{\mu}(s^{\pm}_{j,0}, \mu^{\pm}_{j,0})$
are also bounded by \eqref{2.22}.
Thus in order to prove that $G_{(s,t,\boldsymbol{\varPhi}^*)}
(s^{\pm}_{j,0}, \mu^{\pm}_{j,0}, \boldsymbol{\varPhi}^{*\pm}_{j,0},0)$
is invertible, we  only need to verify that
\begin{equation}\label{detnot0}
\mbox{Det}\biggl[
\begin{array}{cc}
0 & (g_{1}^{0})_{\mu}( \mu^{\pm}_{j,0})\\
(g_{2}^{0})_{s}(s^{\pm}_{j,0}, \mu^{\pm}_{j,0}) & (g_{2}^{0})_{\mu}(s^{\pm}_{j,0}, \mu^{\pm}_{j,0})
\end{array}
\biggr]=-(g_{1}^{0})_{\mu}(g_{2}^{0})_{s}(s^{\pm}_{j,0}, \mu^{\pm}_{j,0})
\neq 0.
\end{equation}
Differentiating  \eqref{g1def} and \eqref{g2def}, and then by Propositions \ref{prop1} and \ref{prop2}, we have
\begin{equation}\label{DET}
\begin{split}
&-(g_{2}^{0})_{s}(g_{1}^{0})_{\mu}(s^{\pm}_{j,0}, \mu^{\pm}_{j,0})\\
&=A\,\dfrac{\mu_{j,0}^{\pm}\lambda_{j}^{2}}{d_{1}|\Omega|}
\biggl[
\displaystyle\int_{\Omega}\dfrac{\varphi_{j}(x)}{(1+\mu^{\pm}_{j,0}\varphi_{j}(x))^{2}}\dx
-\dfrac{2B}{A}
\displaystyle\int_{\Omega}\dfrac{\varphi_{j}(x)}{(1+\mu^{\pm}_{j,0}\varphi_j(x))^{3}}\dx
\biggr]\\
&=A\,\dfrac{\lambda_{j}^{2}}{d_{1}|\Omega |}
\left[
\displaystyle\int_{\Omega}\dfrac{\dx}{(1+\mu^{\pm}_{j,0}\varphi_{j}(x))}\right.\\
&\left.-\left(1+\frac{2B}{A}\right)\displaystyle\int_{\Omega}\dfrac{\dx}{(1+\mu^{\pm}_{j,0}\varphi_{j}(x))^2}
+\frac{2B}{A}\displaystyle\int_{\Omega}\dfrac{\dx}{(1+\mu^{\pm}_{j,0}\varphi_{j}(x))^3}\right]\\
&=A\,\dfrac{\lambda_{j}^{2}}{d_{1}|\Omega |}
\biggl[-\left(1+\frac{B}{A}\right)
\displaystyle\int_{\Omega}\dfrac{\dx}{(1+\mu^{\pm}_{j,0}\varphi_{j}(x))^2}
+\frac{2B}{A}\displaystyle\int_{\Omega}\dfrac{\dx}{(1+\mu^{\pm}_{j,0}\varphi_{j}(x))^3}\biggr]\\
&\ge A\,\dfrac{\lambda_{j}^{2}}{d_{1}|\Omega |}
\left(1-\frac{B}{A}\right)\displaystyle\int_{\Omega}
\dfrac{\dx}{(1+\mu^{\pm}_{j,0}\varphi_{j}(x))^2}>0;
\end{split}
\end{equation}
which proves \eqref{detnot0} and  completes the proof of Lemma \ref{nondeglem}.
\end{proof}
Owing to Lemma \ref{nondeglem}, the application of the Implicit
Function Theorem for $G$ near $(s,\mu,\boldsymbol{\varPhi}^*,\varepsilon)=
(s^{\pm}_{j,0},\mu^{\pm}_{j,0},\boldsymbol{\varPhi}^{*\pm}_{j,0},0)$
ensures that,
for each $j$ and sign of $\pm$, there exists
a continuously differentiable function $(s^{\pm}_{j}(\varepsilon), \mu^{\pm}_{j}(\varepsilon ),
\boldsymbol{\varPhi}^{*\pm}_{j}(\varepsilon ))$
with respect to $\varepsilon\in [0,\overline{\varepsilon}]$
for some $\overline{\varepsilon}>0$ such that
\begin{equation}\label{impf}
\begin{cases}
G(s^{\pm}_{j}(\varepsilon ), \mu^{\pm}_{j}(\varepsilon ), \boldsymbol{\varPhi}^{*\pm}_{j}(\varepsilon ),\varepsilon)=0
\ \ \mbox{for any}\ \ \varepsilon\in [0,\overline{\varepsilon}],\\
(s^{\pm}_{j}(0), \mu^{\pm}_{j}(0), \boldsymbol{\varPhi}^{*\pm}_{j}(x,0))=
(s^{\pm}_{j,0}, \mu^{\pm}_{j,0}, \boldsymbol{\varPhi}^{*\pm}_{j,0}(x)).
\end{cases}
\end{equation}
Since the system $G(s,\mu,\boldsymbol{\varPhi}^*,\varepsilon)=0$
is equivalent to \eqref{2}, which completes the proof of Theorem \ref{LSprop}.
\end{proof}

\begin{proof}[{\bf Proof of Theorem \ref{blthm}}]
By \eqref{change}, substituting \eqref{ppe} into \eqref{2.5}, i.e.
$$
u^\pm_{j,\varepsilon}(x)=h_{1\varepsilon}(\phi^\pm_{j,\varepsilon}(x),\psi^\pm_{j,\varepsilon}(x)),\;\;
w^\pm_{j,\varepsilon}(x)=\frac{1}{\varepsilon}h_{2\varepsilon}(\phi^\pm_{j,\varepsilon}(x),\psi^\pm_{j, \varepsilon}(x)),
$$
we get  the branches  of nonconstant positive solutions of the 2nd shadow system
\eqref{LNlim2} for $\varepsilon\in (0,\overline{\varepsilon }]$; which are continuous differentiable in $\varepsilon$. By \eqref{2.5} \eqref{2.8} and \eqref{h}, it can be checked that
$$
u^\pm_{j,\varepsilon}(x)\to \frac{\phi^\pm_{j,0}(x)}{\psi^\pm_{j,0}(x)-\beta \phi^\pm_{j,0}(x)}=\frac{a_2}{b_2(1+\mu^\pm_{j,0}\varphi_{j}(x))}>0
\ {\rm as}\;\varepsilon \downarrow 0;
$$
and
$$
\varepsilon w^\pm_{j,\varepsilon}(x)\to  \psi^\pm_{j,0}(x)-\beta \phi^\pm_{j,0}(x)= \frac{b_2}{a_2}s^\pm_{j,0}(1+\mu^\pm_{j,0}\varphi_{j}(x))>0\
{\rm as}\;\varepsilon \downarrow 0;
$$
which completes the proof of Theorem \ref{blthm}.
\end{proof}

\section{Instability of steady states for the shadow system
near the blow-up point}
This section is devoted to a proof of Theorem \ref{instthm}.
Under the transformation \eqref{change}, the evolutional shadow system \eqref{2.2*} can be written as the evolutional
system \eqref{1}, i.e.
\begin{equation}\label{she}
\dfrac{\partial}{\partial t}
\biggl[
\begin{array}{c}
\frac{\varepsilon}{d_1}\,h_{1\varepsilon}(\phi(x,t), \psi(x,t))\\[2mm]
\frac{1}{d_2}\,h_{2\varepsilon}(\phi(x,t), \psi(x,t))
\end{array}
\biggr]
=
\mathcal{L}
\biggl[
\begin{array}{c}
\phi(x,t)\\
\psi(x,t)
\end{array}
\biggr]
+\varepsilon\,
F(\phi(x,t), \psi(x,t), \varepsilon),
\end{equation}
where $(h_{1\varepsilon}(\phi, \psi), h_{2\varepsilon}(\phi, \psi))$ and
$\mathcal{L}$ are defined by \eqref{2.5} and \eqref{Ldef},
respectively, and
\[F(\phi, \psi, \varepsilon ):=
\biggl[
\begin{array}{c}
f_{1}(\phi, \psi, \varepsilon )\\
f_{2}(\phi, \psi, \varepsilon )
\end{array}
\biggr]
\]
with $f_{1}$ and $f_{2}$  defined by \eqref{fgdef}, and
\begin{equation}\label{h12}
\begin{array}{l}
h_{1\varepsilon}(\phi,\psi)=h_{10}(\phi,\psi)+\varepsilon h^*_{1\varepsilon}(\phi,\psi),\;\;h_{10}(\phi,\psi)=\frac{\phi}{\psi-\beta \phi},\\
h_{2\varepsilon}(\phi,\psi)=h_{20}(\phi,\psi)+\varepsilon h^*_{2\varepsilon}(\phi,\psi),
\;\;h_{20}(\phi,\psi)=\psi-\beta \phi.
\end{array}
\end{equation}
We recall \eqref{4} to note
$$
F^0(\phi, \psi):=F(\phi, \psi, 0)=\left(
\begin{array}{cc}
\frac{1}{d_1}\frac{\phi}{\psi-\beta\phi}\left(a_1-b_1\frac{\phi}{\psi-\beta\phi}\right)\\[2mm]
\frac{\lambda^2_j}{a_2}(\psi-\beta \phi)-b_2\frac{\lambda^2_j}{a^2_2}\phi+\lambda_j\left(\beta+\frac{b_2}{a_2}\right)\frac{\phi}{\psi-\beta\phi}
\end{array}
\right).
$$
For any small $\varepsilon\in (0,\overline{\varepsilon}]$,
let $(\phi_{\varepsilon}(x), \psi_{\varepsilon}(x))$ be
a positive stationary solution of \eqref{she} obtained by
Theorem \ref{LSprop}. The linearized evolutional system \eqref{she} around $(\phi_{\varepsilon}(x), \psi_{\varepsilon}(x))$
is in the following form:
\begin{equation}\label{linearsys}
 T^{\varepsilon}(x)
\biggl[
\begin{array}{c}
\widehat{\phi}_t(x,t)\\
\widehat{\psi}_t(x,t)
\end{array}
\biggr]
=
\mathcal{L}
\biggl[
\begin{array}{c}
\widehat{\phi}(x,t)\\
\widehat{\psi}(x,t)
\end{array}
\biggr]
+\varepsilon F^{\varepsilon}_{(\phi, \psi)}
(\phi_{\varepsilon}(x), \psi_{\varepsilon}(x), \varepsilon )
\biggl[
\begin{array}{c}
\widehat{\phi}(x,t)\\
\widehat{\psi}(x,t)
\end{array}
\biggr];
\end{equation}
where
\begin{equation}\label{Mdef}
    T^{\varepsilon }(x):=\biggl[
    \begin{array}{cc}
    \frac{\varepsilon}{d_1}(h_{1\varepsilon})_\phi &
  \frac{\varepsilon}{d_1}(h_{1\varepsilon})_\psi  \\
    \frac{1}{d_2}(h_{2\varepsilon})_\phi & \frac{1}{d_2}(h_{2\varepsilon})_\psi    \end{array}\biggr]\biggl|_{(\phi,\psi)=(\phi_{\varepsilon}(x), \psi_{\varepsilon}(x))}
=T^0+\varepsilon T^\varepsilon_1(x),
\end{equation}
with
\begin{equation}\label{T0}
    T^{0}:=\biggl[
    \begin{array}{cc}
0 &0 \\
    -\frac{\beta \lambda_j}{a_2} & \frac{ \lambda_j}{a_2}  \end{array}\biggr].
\end{equation}
Here we note that $d_{2}=a_{2}/\lambda_{j}$ when $\varepsilon =0$.
In what follows, we denote
\begin{equation}\label{Jacobi}
F_{(\phi, \psi)}^{\varepsilon}(x)
:= F_{(\phi, \psi)}\bigl(\phi_{\varepsilon}(x),\,\psi_{\varepsilon}(x),\,\varepsilon \bigr),
\end{equation}
for the sake of simplicity of notation.
Denote
$$
(\phi_0(x),\psi_0(x))=(\phi^\pm _{j,0}(x),\psi^\pm _{j,0}(x))=
(s^\pm_{j,0},s^\pm_{j,0}(\beta+b_2/a_2(1+\mu^\pm_{j,0}\varphi_j(x))),
$$
\begin{equation}\label{F0}
    F^0_{(\phi, \psi)}(x)=\left(\begin{array}{cc}
c^0_{11}(x)&c^0_{12}(x)\\
c^0_{21}(x)&c^0_{22}(x)
\end{array}\right),
\end{equation}
and $l_0(x)=1+\mu^\pm_{j,0}\varphi_j(x)$, then
$$
\psi_0(x)-\beta\phi_0(x)=\dfrac{b_2}{a_2}s^\pm_{j,0}l_0(x),
\qquad
\frac{\phi_0(x)}{\psi_0(x)-\beta\phi_0(x)}=\frac{a_2}{b_2l_0(x)}.
$$
After some simple computation, it can be checked that
\begin{align*}
c^0_{11}(x)
&= \frac{1}{d_1}\frac{\psi_0(x)}{(\psi_0(x)-\beta\phi_0(x))^2}
   \left(a_1 - 2b_1\frac{\phi_0(x)}{\psi_0(x)-\beta\phi_0(x)}\right) \\[1mm]
&= \frac{1}{d_1}\left(\beta + \frac{b_2}{a_2}l_0(x)\right)
   \frac{a_1 a_2^2}{b_2^2 s^\pm_{j,0}}
   \left(\frac{1}{l_0(x)^2}-\frac{2B}{A\,l_0(x)^3}\right),
\\[2mm]
c^0_{12}(x)
&= -\frac{1}{d_1}\frac{\phi_0(x)}{(\psi_0(x)-\beta\phi_0(x))^2}
   \left(a_1 - 2b_1\frac{\phi_0(x)}{\psi_0(x)-\beta\phi_0(x)}\right) \\[1mm]
&= -\frac{1}{d_1}\frac{a_1 a_2^2}{b_2^2 s^\pm_{j,0}}
   \left(\frac{1}{l_0(x)^2}-\frac{2B}{A\,l_0(x)^3}\right),
\\[2mm]
c^0_{21}(x)
&= \frac{\lambda_j^2(b_2+a_2\beta)}{a_2^2}
   \left(-1 + \frac{a_2}{\lambda_j(\psi_0(x)-\beta\phi_0(x))}
   + \frac{a_2\beta\phi_0}{\lambda_j(\psi_0(x)-\beta\phi_0(x))^2}\right) \\[1mm]
&= -\frac{\lambda_j^2(b_2+a_2\beta)}{a_2^2}
   + \frac{a_2\lambda_j(b_2+a_2\beta)}{b_2^2 s^\pm_{j,0} l_0(x)^2}
     \left(\beta + \frac{b_2}{a_2}l_0(x)\right),
\\[2mm]
c^0_{22}(x)
&= \frac{\lambda_j^2}{a_2}
   - \frac{(b_2+a_2\beta)\lambda_j}{a_2}
     \frac{\phi_0(x)}{(\psi_0(x)-\beta\phi_0(x))^2} \\[1mm]
&= \frac{\lambda_j^2}{a_2}
   - \frac{a_2\lambda_j(b_2+a_2\beta)}{b_2^2 s^\pm_{j,0} l_0(x)^2}.
\end{align*}

In the following of this section, we investigate the existence and the precise location of an eigenvalue $\sigma$ near zero with an eigenfunction $(\widehat{\phi}(x),\widehat{\psi}(x))$ of the  following eigenvalue problem corresponding to  the  linearized system \eqref{linearsys},
 \begin{equation}\label{eige}
 \sigma T^{\varepsilon}(x)
\biggl[
\begin{array}{c}
\widehat{\phi}(x)\\
\widehat{\psi}(x)
\end{array}
\biggr]
=
\mathcal{L}
\biggl[
\begin{array}{c}
\widehat{\phi}(x)\\
\widehat{\psi}(x)
\end{array}
\biggr]
+\varepsilon
F^{\varepsilon}_{(\phi, \psi)}(x)
\biggl[
\begin{array}{c}
\widehat{\phi}(x)\\
\widehat{\psi}(x)
\end{array}
\biggr].
\end{equation}

\begin{thm}\label{thm5.1}
Assume that $\beta\ge 0$, $N\le 4$, $A>B$ and $\lambda_{j}$ is simple.
For any $j$ and each sign of $\pm$, let $\mu^{\pm}_{j,0}$ be selected as in Proposition \ref{prop1}, then
there exists $\overline{\varepsilon}>0$
such that for any $\varepsilon\in [0, \overline{\varepsilon}]$,
the eigenvalue problem \eqref{eige} has a
positive eigenvalue $\sigma_{\varepsilon }:=\sigma^{\pm}_{j}(\varepsilon )$
expressed as
\begin{equation}\label{eigenv}
    \sigma^{\pm}_{j}(\varepsilon )=
\varepsilon\varLambda^{\pm}_{j}(\varepsilon )
\end{equation}
with an eigenfunction
$(\widehat{\phi}_{\varepsilon }(x), \widehat{\psi}_{\varepsilon }(x))=
(\widehat{\phi}^{\pm}_{j}(x;\varepsilon ), \widehat{\psi}^{\pm}_{j}(x;\varepsilon ))$
expressed as
\begin{equation}\label{egeq}
    \biggl[\begin{array}{c}
    \widehat{\phi}^\pm_j(x,\varepsilon)\\
    \widehat{\psi}^\pm_j(x,\varepsilon)
    \end{array}
    \biggr]=\biggl[
    \begin{array}{c}
    1\\
    \beta+\frac{b_{2}}{a_2}\ell_0(x)
    \end{array}
    \biggr]
    +\gamma^{\pm}_{j}(\varepsilon )
    \biggl[
    \begin{array}{c}
    1\\
    \beta+\frac{b_{2}}{a_2}
    \end{array}
    \biggr]
    +\varepsilon\biggl[
    \begin{array}{c}
    \widetilde{\phi}^{\pm}_{j}(x;\varepsilon )\\
    \widetilde{\psi}^{\pm}_{j}(x;\varepsilon )
    \end{array}\biggr]   ,
\end{equation}
where
\begin{equation}\label{elldef}
\ell_0(x):= 1+\mu^{\pm}_{j,0}\varphi_{j}(x).
\end{equation}
Here $\varLambda^{\pm}_{j}(\varepsilon )$,
$\gamma^{\pm}_{j}(\varepsilon )\in\mathbb{R}$ and
$(\widetilde{\phi}^{\pm}_{j}(x;\varepsilon ),
    \widetilde{\psi}^{\pm}_{j}(x;\varepsilon ))\in X_0$
are continuous for $\varepsilon\in [0, \overline{\varepsilon}]$ and satisfy
\begin{equation}\label{eigenlim}
    \varLambda^{\pm}_{j}(0)=\lambda_{j},\quad
        \gamma^\pm_{j}(0) = 0,\quad
\biggl[\begin{array}{c}
\widetilde{\phi}_j(x,0)\\
\widetilde{\psi}_j(x,0)
\end{array}\biggr]
=
\dfrac{b_{2}\lambda_{j}^{2}}{a_{2}}\mathcal{L}_{X_0}^{-1}
\left[
\begin{array}{c}
0\\
1
\end{array}
\right].
\end{equation}
\end{thm}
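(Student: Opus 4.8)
The plan is to read \eqref{eige} as a singular perturbation problem and resolve it by the same Lyapunov--Schmidt scheme used in Section 3, now treating both the eigenfunction and the eigenvalue as unknowns. The scaling $\sigma=\varepsilon\varLambda$ is forced by the leading-order balance: the kernel part of any eigenfunction is annihilated by $\mathcal{L}$, so the two sides of \eqref{eige} can balance on $\mathrm{Ker}(\mathcal{L})$ only when $\sigma T^{0}$ and $\varepsilon F^{0}_{(\phi,\psi)}$ are comparable. Writing the eigenfunction as in \eqref{egeq},
\begin{equation*}
\widehat{V}:=\biggl[\begin{array}{c}\widehat{\phi}\\ \widehat{\psi}\end{array}\biggr]
=\boldsymbol{v}_{0}(x)+\gamma\,\boldsymbol{e}_{1}+\varepsilon\,\widetilde{V},
\qquad
\boldsymbol{v}_{0}(x):=\biggl[\begin{array}{c}1\\ \beta+\frac{b_{2}}{a_{2}}\ell_{0}(x)\end{array}\biggr]
=\boldsymbol{e}_{1}+\frac{b_{2}}{a_{2}}\mu^{\pm}_{j,0}\,\boldsymbol{e}_{2},
\end{equation*}
with $\widetilde{V}=(\widetilde{\phi},\widetilde{\psi})\in X_{0}$, the identity $\mathcal{L}\widehat{V}=\varepsilon\mathcal{L}\widetilde{V}$ turns the singular factor $\varepsilon^{-1}\mathcal{L}$ into a regular one; dividing \eqref{eige} by $\varepsilon$ gives $\varLambda T^{\varepsilon}\widehat{V}-\mathcal{L}\widetilde{V}-F^{\varepsilon}_{(\phi,\psi)}\widehat{V}=0$, which is smooth up to $\varepsilon=0$ in the unknowns $(\varLambda,\gamma,\widetilde{V})$.

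I would then project this regularized equation with $P_{1}$, $P_{2}$ and $I-P$ to obtain a reduced map $\mathcal{G}(\varLambda,\gamma,\widetilde{V},\varepsilon)=0$ of the same type as \eqref{G=0}: two scalar solvability conditions $\mathcal{G}_{i}:=\langle \varLambda T^{\varepsilon}\widehat{V}-F^{\varepsilon}_{(\phi,\psi)}\widehat{V},\boldsymbol{e}_{i}^{*}\rangle=0$ $(i=1,2$, the term $\mathcal{L}\widetilde{V}$ dropping since it lies in $\mathrm{Range}(\mathcal{L}))$, together with the $X_{0}$-equation $\widetilde{V}=\mathcal{L}_{X_{0}}^{-1}(I-P)[\varLambda T^{\varepsilon}\widehat{V}-F^{\varepsilon}_{(\phi,\psi)}\widehat{V}]$. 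At $\varepsilon=0$ I would check that $(\varLambda,\gamma)=(\lambda_{j},0)$ solves the two scalar conditions: using $T^{0}\boldsymbol{v}_{0}=(0,\tfrac{b_{2}\lambda_{j}}{a_{2}^{2}}\ell_{0})$ together with the explicit entries $c^{0}_{ik}(x)$ of $F^{0}_{(\phi,\psi)}$, one finds that the first component of $F^{0}_{(\phi,\psi)}\boldsymbol{v}_{0}$ vanishes identically while its second component equals $\tfrac{b_{2}\lambda_{j}^{2}}{a_{2}^{2}}\mu^{\pm}_{j,0}\varphi_{j}$, so the $\boldsymbol{e}_{1}^{*}$ pairing is trivially zero and the $\boldsymbol{e}_{2}^{*}$ pairing cancels by $\int_{\Omega}\ell_{0}\varphi_{j}\,\dx=\mu^{\pm}_{j,0}$. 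The $X_{0}$-equation then fixes $\widetilde{V}(x,0)$; since $\ell_{0}-\mu^{\pm}_{j,0}\varphi_{j}\equiv 1$, the vector $\lambda_{j}T^{0}\boldsymbol{v}_{0}-F^{0}_{(\phi,\psi)}\boldsymbol{v}_{0}$ collapses to a constant multiple of $(0,1)$, which belongs to $\mathrm{Range}(\mathcal{L})$ for $j\ge 1$ because $\int_{\Omega}\varphi_{j}\,\dx=0$, producing the closed form \eqref{eigenlim}.

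The crux is the non-degeneracy required for the Implicit Function Theorem, and I expect this to be the main obstacle. Because $\widehat{V}$ depends on $\widetilde{V}$ only through the prefactor $\varepsilon$, at $\varepsilon=0$ the Fr\'echet derivative of $\mathcal{G}$ in $(\varLambda,\gamma,\widetilde{V})$ is block lower-triangular with the identity in the $X_{0}$-slot, so invertibility reduces to the $2\times 2$ block $\partial(\mathcal{G}_{1},\mathcal{G}_{2})/\partial(\varLambda,\gamma)$. This unwinds in parallel with Lemma \ref{nondeglem}: $\partial_{\varLambda}\mathcal{G}_{1}=\langle T^{0}\boldsymbol{v}_{0},\boldsymbol{e}_{1}^{*}\rangle=0$ since the first component of $T^{0}\boldsymbol{v}_{0}$ vanishes, whereas $\partial_{\varLambda}\mathcal{G}_{2}=\langle T^{0}\boldsymbol{v}_{0},\boldsymbol{e}_{2}^{*}\rangle=\tfrac{b_{2}\lambda_{j}}{a_{2}^{2}}\mu^{\pm}_{j,0}\neq 0$ and $\partial_{\gamma}\mathcal{G}_{1}=-\langle F^{0}_{(\phi,\psi)}\boldsymbol{e}_{1},\boldsymbol{e}_{1}^{*}\rangle$ is proportional to $\mu^{\pm}_{j,0}\bigl[\int_{\Omega}\varphi_{j}\ell_{0}^{-2}\,\dx-\tfrac{2B}{A}\int_{\Omega}\varphi_{j}\ell_{0}^{-3}\,\dx\bigr]$, which is exactly the sign-definite quantity already established in \eqref{DET} via Proposition \ref{prop2}. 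Hence the determinant $-\partial_{\gamma}\mathcal{G}_{1}\,\partial_{\varLambda}\mathcal{G}_{2}\neq 0$ and the block is invertible.

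Finally, the Implicit Function Theorem would yield branches $\varLambda^{\pm}_{j}(\varepsilon)$, $\gamma^{\pm}_{j}(\varepsilon)$ and $(\widetilde{\phi}^{\pm}_{j},\widetilde{\psi}^{\pm}_{j})\in X_{0}$ that are continuous (indeed $C^{1}$, since $T^{\varepsilon}$, $F^{\varepsilon}_{(\phi,\psi)}$ and $(\phi_{\varepsilon},\psi_{\varepsilon})$ are $C^{1}$ in $\varepsilon$) on $[0,\overline{\varepsilon}]$ and satisfy the limits \eqref{eigenlim}. Carrying out the whole construction in the real spaces $X,Y$ keeps $\varLambda^{\pm}_{j}$ real, so $\sigma^{\pm}_{j}(\varepsilon)=\varepsilon\varLambda^{\pm}_{j}(\varepsilon)$ is a genuine real eigenvalue with eigenfunction $\widehat{V}\to\boldsymbol{v}_{0}\neq 0$; since $\varLambda^{\pm}_{j}(0)=\lambda_{j}>0$, it is positive for small $\varepsilon>0$, which is the spectral instability asserted in Theorem \ref{instthm}.
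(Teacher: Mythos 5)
Your proposal is correct and follows essentially the same route as the paper's own proof: the same ansatz $\sigma=\varepsilon\varLambda$ with the eigenfunction split as $\boldsymbol{v}_{0}+\gamma\boldsymbol{e}_{1}+\varepsilon\widetilde{V}$, the same regularization obtained by dividing out $\varepsilon$ (using $\mathcal{L}\widehat{V}=\varepsilon\mathcal{L}\widetilde{V}$), the same Lyapunov--Schmidt projections $P_{1},P_{2},I-P$, the same limiting root $(\varLambda^{0},\gamma^{0})=(\lambda_{j},0)$ with $\widetilde{V}(x,0)$ given by $\mathcal{L}_{X_0}^{-1}$ applied to a constant multiple of $(0,1)$, and the identical non-degeneracy argument in which the $2\times 2$ block has vanishing $\partial_{\varLambda}$-entry in the first row, so its determinant reduces to $-\partial_{\gamma}\mathcal{G}_{1}\,\partial_{\varLambda}\mathcal{G}_{2}\neq 0$, with the sign of $\partial_{\gamma}\mathcal{G}_{1}$ controlled by exactly the quantity appearing in \eqref{DET} via Propositions \ref{prop1} and \ref{prop2}. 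The only cosmetic differences are that you invoke \eqref{DET} directly rather than re-deriving the constant $C_{0}<0$ as in Lemma \ref{limlem}, and you verify existence (rather than uniqueness) of the limiting root, which is all the Implicit Function Theorem requires.
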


\begin{proof}
Following the Lyapunov-Schmidt reduction procedure
in the previous section,
we seek for an eigenpair of \eqref{eige} in
the form
\begin{equation}\label{eigen1}
\sigma =\varepsilon\varLambda
\quad\mbox{and}\quad
 \biggl[\begin{array}{c}
    \widehat{\phi}(x)\\
    \widehat{\psi}(x)
    \end{array}
    \biggr]=\biggl[
    \begin{array}{c}
    1\\
    \beta+\frac{b_{2}}{a_2}\ell_0(x)
    \end{array}
    \biggr]
    +\gamma
    \biggl[
    \begin{array}{c}
    1\\
    \beta+\frac{b_{2}}{a_2}
    \end{array}
    \biggr]
    +\varepsilon\biggl[
    \begin{array}{c}
    \widetilde{\phi}(x)\\
    \widetilde{\psi}(x)
    \end{array}\biggr] ,
\end{equation}
where $\varLambda, \gamma\in \mathbb{R}$ and $(\widetilde{\phi}, \widetilde{\psi})\in
X_0=X\cap \mbox{Range}(\mathcal{L})$.

Substituting \eqref{eigen1} into \eqref{eige},
we see that the eigenvalue problem \eqref{eige}
is reduced to the system
\begin{equation}\label{egp2}
    \mathcal{L}\biggl[
    \begin{array}{c}
    \widetilde{\phi}\\
    \widetilde{\psi}
    \end{array}
    \biggr]
    =(\varLambda T^{\varepsilon}(x)-F^{\varepsilon}_{(\phi, \psi)}(x))
    \biggl[
    \begin{array}{l}
    1+\gamma +\varepsilon\widetilde{\phi}\\
    \beta (1+\gamma)+\frac{b_{2}}{a_2}(\ell_0(x)+\gamma )+\varepsilon\widetilde{\psi}
    \end{array}
    \biggr].
\end{equation}
It will be considered that
$(\varLambda, \gamma, \widetilde{\phi}, \widetilde{\psi})$
are the unknown variables for the system \eqref{egp2} with a given $\varepsilon>0$.
Our task is to construct solutions of \eqref{egp2}
by the Lyapunov-Schmidt reduction method for small enough $\varepsilon>0$.
To this end, we decompose \eqref{egp2} into
the $\mbox{Ker}(\mathcal{L})$ component and $\mbox{Range}(\mathcal{L})$ component
as follows:
\begin{equation}\label{LS2}
\begin{cases}
P_{i}(\varLambda T^{\varepsilon }(x)-F^{\varepsilon }_{(\phi, \psi )}(x))
\biggl(
\biggl[
    \begin{array}{l}
    1+\gamma \\
    \beta(1+\gamma)+\frac{b_{2}}{a_2}(\ell_0(x)+\gamma )
    \end{array}
    \biggr]
+
\varepsilon
    \biggl[
    \begin{array}{l}
    \widetilde{\phi}\\
    \widetilde{\psi}
    \end{array}
    \biggr]\biggr)
=\biggl[
\begin{array}{c}
0\\
0
\end{array}
\biggr],
\\[2mm]
    \biggl[\begin{array}{c}
         \widetilde{\phi} \\
         \widetilde{\psi}
    \end{array}\biggr]=
    \mathcal{L}_{X_0}^{-1}(I-P)\biggl((\varLambda T^{\varepsilon}(x)-F^{\varepsilon}_{(\phi, \psi)}(x))
\biggl[
\begin{array}{l}
    1+\gamma +\varepsilon\widetilde{\phi}\\
    \beta(1+\gamma)+\frac{b_{2}}{a_2}(\ell_0(x)+\gamma )+\varepsilon\widetilde{\psi}
    \end{array}
    \biggr]
    \biggr),
    \end{cases}
\end{equation}
with $P_{i}$ $(i=1,2)$   defined by \eqref{P1P2}.

Set
$$
Q^\varepsilon(\varLambda,\gamma):=(\Lambda  T^{\varepsilon}(x)-F^{\varepsilon}_{(\phi, \psi)}(x))
\biggl[
\begin{array}{l}
    1+\gamma \\
    \beta(1+\gamma)+\frac{b_{2}}{a_2}(\ell_0(x)+\gamma )
    \end{array}
    \biggr],
$$
and
$$
\widetilde{Q}^\varepsilon (\varLambda,\gamma, \widetilde{\phi},\widetilde{\psi}):
=(\varLambda T^{\varepsilon}(x)-F^{\varepsilon}_{(\phi, \psi)}(x))
\biggl[
\begin{array}{l}
    \widetilde{\phi} \\
    \widetilde{\psi}
    \end{array}
    \biggr],
$$
then \eqref{LS2} can be written as
\begin{equation}\label{LS22}
\begin{cases}
P_{1}(Q^{\varepsilon}(\varLambda,\gamma)+\varepsilon\widetilde{Q}^{\varepsilon}(\varLambda,\gamma, \widetilde{\phi},\widetilde{\psi}))=0,\\
P_{2}(Q^{\varepsilon}(\varLambda,\gamma)+\varepsilon\widetilde{Q}^{\varepsilon}(\varLambda,\gamma, \widetilde{\phi},\widetilde{\psi}))=0,
\\[2mm]
\biggl[
\begin{array}{l}
    \widetilde{\phi} \\
    \widetilde{\psi}
    \end{array}
    \biggr]=\mathcal{L}_{X_0}^{-1}(I-P)(Q^{\varepsilon}(\varLambda,\gamma)+\varepsilon\widetilde{Q}^{\varepsilon}(\varLambda,\gamma, \widetilde{\phi},\widetilde{\psi})).
\end{cases}
\end{equation}
Therefore, if we set
\begin{equation}\label{pij}
    \begin{array}{ll}
    p_{i1}(\varLambda, \gamma, \varepsilon)\boldsymbol{e}_{i}&:=P_{i}Q^{\varepsilon}(\varLambda,\gamma),\\
    p_{i2}(\varLambda, \gamma, \widetilde{\phi}, \widetilde{\psi}, \varepsilon)\boldsymbol{e}_{i}&:=P_{i}
    \widetilde{Q}^{\varepsilon}(\varLambda,\gamma, \widetilde{\phi},\widetilde{\psi}),
\end{array}
\end{equation}
for $i=1,2$,
then \eqref{LS22} is reduced to
\begin{equation}\label{LS22*}
    \begin{cases}
    p_{11}(\varLambda, \gamma, \varepsilon)+
    \varepsilon\,p_{12}(\varLambda, \gamma, \widetilde{\phi}, \widetilde{\psi}, \varepsilon)=0,\vspace{1mm}\\
    p_{21}(\varLambda, \gamma, \varepsilon)+
    \varepsilon\,p_{22}(\varLambda, \gamma, \widetilde{\phi}, \widetilde{\psi}, \varepsilon)=0,\vspace{1mm}\\
    \biggl[\begin{array}{c}
         \widetilde{\phi} \\
         \widetilde{\psi}
    \end{array}\biggr]=
    \mathcal{L}_{X_0}^{-1}(I-P)(Q^{\varepsilon}(\varLambda,\gamma)+\varepsilon\widetilde{Q}^{\varepsilon}(\varLambda,\gamma, \widetilde{\phi},\widetilde{\psi})).
\end{cases}
\end{equation}
After some simple  computation, it can be verified that
\begin{equation}\label{Q0}
\begin{array}{ll}
Q^0(\varLambda,\gamma)&:= (\varLambda T^{0}(x)-F^{0}_{(\phi, \psi)}(x))
\biggl[
\begin{array}{l}
    1+\gamma \\
    \beta(1+\gamma)+\dfrac{b_{2}}{a_2}(\ell_0(x)+\gamma )
    \end{array}
    \biggr]\\
&=
\left[
\begin{array}{l}
\displaystyle \frac{\gamma a_1 a_2^2}{d_1 b_2^2 s^\pm_{j,0}}
   \frac{1-l_0(x)}{l_0(x)^2}
   \left(1-\frac{2B}{A l_0(x)}\right) \\[3mm]
\displaystyle \frac{\lambda_j^2 b_2}{a_2^2}
   +\frac{\lambda_j b_2 (\varLambda -\lambda_j)}{a_2^2} l_0(x)
   +\frac{\gamma \Lambda \lambda_j b_2}{a_2^2}
   +\frac{\gamma \lambda_j(b_2+a_2\beta)}{b_2 s^\pm_{j,0}}
    \frac{1-l_0(x)}{l_0(x)^2}
\end{array}
\right]
\\[2mm]
&:=
\left[
\begin{array}{l}
q_{1}^{0}(x , \gamma)\\[1mm]
q_{2}^{0}(x , \varLambda, \gamma)
\end{array}
\right]
\end{array}
\end{equation}
To construct the solutions of
\eqref{LS22} for small $\varepsilon>0$,
we first focus on finding the solution $(\varLambda^0, \gamma^0)$ of  the first two equations of the  limiting system of \eqref{LS22*}  at $\varepsilon = 0$:
\begin{equation}\label{limeq1}
p_{11}(\varLambda^0, \gamma^0, 0)=0\ \ \mbox{and}\ \
p_{21}(\varLambda^0, \gamma^0, 0)=0.
\end{equation}
By \eqref{P1P2} \eqref{LS22} \eqref{pij} and \eqref{Q0}, we see that
\eqref{limeq1} is reduced to
\begin{equation}\label{limeq2}
\begin{cases}
0=p_{11}(\varLambda^0, \gamma^0, 0)=\dfrac{1}{|\Omega |}\displaystyle\int_\Omega q^0_1(x,\gamma^0)\dx,\vspace{1mm}\\
0=p_{21}(\varLambda^0,\gamma^0,0)=\displaystyle\int_\Omega \left[-\left(\beta +\frac{b_2}{a_2}\right)q_1^0(x,\gamma^0)+
q_{2}^{0}(x, \varLambda^0, \gamma^0)\right]\varphi_j(x)\dx,
\end{cases}
\end{equation}
with $q^0_1$ and $q^0_2$ are defined by \eqref{Q0}.

\begin{lem}\label{limlem}
Assume that $N\le 4$,
$A>B$ and $\lambda_j>0$ is simple.
Then the system \eqref{limeq2} has a unique solution $(\varLambda^0,\gamma^0)=(\lambda_{j},0)$.
\end{lem}
\begin{proof}
It follows from \eqref{Q0} and \eqref{limeq2} that
\begin{equation}\label{q10}
    \begin{split}
p_{11}(\varLambda, \gamma, 0)=&
\dfrac{1}{|\Omega |}\displaystyle\int_{\Omega}q^{0}_{1}(x,\gamma )\dx
=\dfrac{\gamma a^2_2a_1}{d_1b^2_{2}s^{\pm}_{j,0}|\Omega |}
\displaystyle\int_{\Omega }
\frac{1-l_0(x)}{l_0^2}\bigg(1-\frac{2B}{Al_0(x)}\bigg)\dx\\
=&
\dfrac{\gamma a^2_2a_1}{d_1b^2_{2}s^{\pm}_{j,0}|\Omega |}\displaystyle\int_{\Omega }
\left[-\frac{1}{l_0(x)}+\bigg(\frac{2B}{A}+1\bigg)\frac{1}{l^2_0(x)}-\frac{2B}{Al^3_0(x)}\right]\dx\\
=&
\dfrac{\gamma a^2_2a_1}{d_1b^2_{2}s^{\pm}_{j,0}|\Omega |}\displaystyle\int_{\Omega }
\left[\bigg(\frac{B}{A}+1\bigg)\frac{1}{l^2_0(x)}-\frac{2B}{Al^3_0(x)}\right]\dx:=C_0\gamma,
\end{split}
\end{equation}
where we used the fact that
$l_0(x)=1+\mu^\pm_{j,0}\varphi_j(x)$
satisfies
\[
\int_{\Omega}\dfrac{\dx}{l_{0}(x)}=
\dfrac{B}{A}\int_{\Omega}\dfrac{\dx}{l_{0}^{2}(x)}\]
by Proposition \ref{prop1}.
Furthermore,
by Proposition \ref{prop2}, we have
\begin{equation}\label{neko}
\displaystyle\int_{\Omega }
\left[\bigg(\frac{B}{A}+1\bigg)\frac{1}{l^2_0(x)}-\frac{2B}{A l^3_0(x)}\right]\dx
\le
\displaystyle\int_{\Omega} \bigg(\frac{B}{A}-1\bigg)\frac{1}{l^2_0(x)}\dx<0.
\end{equation}
Thus $C_0<0$ and $p_{11}(\varLambda^0, \gamma^0, 0)=0$ if and only if $\gamma^0=0$.

Since $\gamma^0=0$ and $q^{0}_{1}(x,0)\equiv 0$, the second equation of \eqref{limeq2} is
equivalent to
\begin{equation}\label{q20}
\begin{split}
0&=\displaystyle\int_{\Omega}q_{2}^{0}(x,
\varLambda^0, 0)\varphi_{j}(x)\,
\dx\\
&=\frac{b_{2}\lambda_{j}}{a_2^2}\displaystyle\int_{\Omega}\bigg[\lambda_j +(\varLambda^0-\lambda_{j})
(1+\mu^{\pm}_{j,0})\varphi_{j}(x)\bigg]\varphi_j(x)\, \dx\\
&=\frac{b_{2}\lambda_{j}}{a_2^2}(\varLambda^0-\lambda_{j})\mu^{\pm}_{j,0}\int_\Omega \varphi^2_j(x) \dx=\frac{b_{2}\lambda_{j}}{a_2^2}(\varLambda-\lambda_{j})\mu^{\pm}_{j,0}.
\end{split}
\end{equation}
Thus system \eqref{limeq2} has a unique solution  $(\varLambda^0,\gamma^0)=(\lambda_{j},0)$, which completes the proof of Lemma \ref{limlem} .
\end{proof}
We continue the proof of Theorem \ref{thm5.1}.
By Lemma \ref{limlem},
we see that
\begin{equation}
p_{11}(\lambda_{j}, 0, 0)=p_{21}(\lambda_{j}, 0, 0)=0.
\end{equation}
In view of the infinitely dimensional component $(\widetilde{\phi},\widetilde{\psi})$ of the limiting system of
\eqref{LS22*} at $\varepsilon=0$, we set $(\varLambda, \varepsilon )=(\lambda_{j}, 0)$
in the third equation to get
\begin{equation}\label{ppsi}
        \left[\begin{array}{c}
         \widetilde{\phi}_{0} \\
         \widetilde{\psi}_{0}
    \end{array}\right]:=
    \mathcal{L}_{X_0}^{-1}(I-P)
    \left[\begin{array}{c}q_1^0(x,0)\\q_2^0(x,\lambda_j,0)\end{array}\right]
    =\dfrac{b_{2}\lambda_{j}^{2}}{a^2_{2}}\mathcal{L}_{X_0}^{-1}(I-P)
    \left[
    \begin{array}{c}
    0\\
    1
    \end{array}
    \right].
\end{equation}
Since $\boldsymbol{e}_{1}^{*}=(1,0)$, $\boldsymbol{e}_{2}^{*}=(-\beta-\frac{b_2}{a_2},1)\varphi_j(x)\in\mbox{Ker}(\mathcal{L}^{*}))$ and
$\mbox{Range}(\mathcal{L})=(\mbox{Ker}(\mathcal{L}^{*}))^{\bot}$,
by the Fredholm alternative and the fact that $(0,1)\cdot\boldsymbol{e}^{*}_1=0$ and $(0,1)\cdot\boldsymbol{e}^{*}_2=0$ implies
$(0,1)\in Y_{0}$,
therefore, \eqref{ppsi} has a unique solution
$$\biggl[\begin{array}{c}
\widetilde{\phi}_{0}(x)\\
\widetilde{\phi}_{0}(x)
\end{array}\biggr]
=
\dfrac{b_{2}\lambda_{j}^{2}}{a^2_{2}}\mathcal{L}_{X_0}^{-1}
\left[
\begin{array}{c}
0\\
1
\end{array}
\right].
$$
Consequently, we see that \eqref{LS22*} has a solution expressed as
\eqref{eigenlim} for the limiting case $\varepsilon =0$.

In what follows, we construct the solutions of \eqref{LS22*}
with sufficiently small $\varepsilon>0$
as the perturbation of the limiting case.
To do so, we define an operator associated with \eqref{LS22*} as follows
\begin{equation}\label{Hdef}
\begin{split}
H(\varLambda, \gamma, \widetilde{\phi}, \widetilde{\psi}, \varepsilon )=
\left[
\begin{array}{l}
   p_{11}(\varLambda, \gamma, \varepsilon)+
    \varepsilon\,p_{12}(\lambda, \gamma, \widetilde{\phi}, \widetilde{\psi}, \varepsilon)\vspace{1mm}\\
    p_{21}(\varLambda, \gamma, \varepsilon)+
    \varepsilon\,p_{22}(\varLambda, \gamma, \widetilde{\phi}, \widetilde{\psi}, \varepsilon)\vspace{1mm}\\[2mm]
    \biggl[\begin{array}{c}
         \widetilde{\phi} \\
         \widetilde{\psi}
    \end{array}\biggr]-
    \mathcal{L}_{X_0}^{-1}(I-P)\biggl((\varLambda T^{\varepsilon}(x)-F^{\varepsilon}_{(\phi, \psi)}(x))\cdot\\[2mm]
    ~~~~~~~~\biggl[
    \begin{array}{l}
    (1+\gamma )+\varepsilon
    \widetilde{\phi}\\
    \beta(1+\gamma)+\frac{b_{2}}{a_2}(\ell_0(x)+\gamma )+\varepsilon
    \widetilde{\psi}
    \end{array}
    \biggr]\biggr)
\end{array}
\right].
\end{split}
\end{equation}
Hence the set of solutions of \eqref{LS22*} coincides with
the set of solutions of $H(\varLambda, \gamma, \widetilde{\phi}$, $\widetilde{\psi}, \varepsilon )=0$.
By the set \eqref{eigenlim} of solutions to \eqref{LS22*} with $\varepsilon =0$,
one can see
$H(\lambda_{j}, 0, \widetilde{\phi}_{0}, \widetilde{\psi}_{0}$,$0)=0$.
In order to apply the Implicit Function Theorem to
$H$ around
$(\lambda_{j}, 0, \widetilde{\phi}_{0}, \widetilde{\psi}_{0},  0)$,
we need to verify that
the Fr\'echet derivative
$H_{(\varLambda, \gamma, \widetilde{\phi}, \widetilde{\psi})}
(\lambda_{j}, 0$, $\widetilde{\phi}_{0}, \widetilde{\psi}_{0}, 0)\,:\,
\mathbb{R}\times\mathbb{R}\times X_0\to
\mathbb{R}\times\mathbb{R}\times X_0$
is invertible.
From \eqref{Hdef}, straightforward calculations yield
\begin{equation}\label{neko}
H_{(\varLambda, \gamma, \widetilde{\phi}, \widetilde{\psi})}
(\lambda_{j}, 0, \widetilde{\phi}_{0}, \widetilde{\psi}_{0}, 0)
=
\left[
\begin{array}{ccc}
(p_{11})_{\varLambda}(\lambda_{j}, 0, 0)
&
(p_{11})_{\gamma}(\lambda_{j}, 0, 0)
&
0
\\
(p_{21})_{\varLambda}(\lambda_{j}, 0, 0)
&
(p_{21})_{\gamma}(\lambda_{j}, 0, 0)
&
0
\\[2mm]
-\mathcal{L}_{X_0}^{-1}(I-P)Q^0_{\varLambda}(\lambda_{j}, 0)
&
-\mathcal{L}_{X_0}^{-1}(I-P)Q^0_{\gamma}(\lambda_{j}, 0)
&
I
\end{array}
\right],
\end{equation}
here $Q^0_{\varLambda}(\lambda_{j}, 0)=T^0(x)$ and
$Q^0_{\gamma}(\lambda_{j}, 0)=(\lambda_jT^0(x)-F^0_{(\phi,\psi)}(x))(1,\beta+\frac{b_2}{a_2})$ are bounded operators.
Then it suffices to show that
\begin{equation}\label{nonzero2}
\mbox{Det}\,\biggl[
\begin{array}{cc}
(p_{11})_{\varLambda}(\lambda_{j}, 0, 0)
&
(p_{11})_{\gamma}(\lambda_{j}, 0, 0)\\
(p_{21})_{\varLambda}(\lambda_{j}, 0, 0)
&
(p_{21})_{\gamma}(\lambda_{j}, 0, 0)
\end{array}
\biggr]
\neq 0.
\end{equation}
Note that
$$
(p_{11})_{\varLambda}(\lambda_{j}, 0, 0)=0,
$$
$$
(p_{11})_{\gamma}(\lambda_{j}, 0, 0)=\frac{a^2_2a_1}{d_1b^2_2s^\pm_{j,0}|\Omega|}\displaystyle\int_\Omega
\frac{l_0(x)-1}{l^2_0(x)}\left(1-\frac{2B}{Al_0(x)}\right)dx=C_0<0,
$$
by Lemma 5.2,
and
$$
(p_{21})_{\varLambda}(\lambda_{j}, 0, 0)=\frac{\lambda_jb_2}{a^2_2} \displaystyle\int_\Omega l_0(x)\varphi_j(x)dx
=\frac{\lambda_jb_2}{a^2_2}\mu^\pm_{j,0}\ne 0,
$$
thus
$$
\mbox{Det}\,\left[
\begin{array}{cc}
(p_{11})_{\varLambda}(\lambda_{j}, 0, 0)
&
(p_{11})_{\gamma}(\lambda_{j}, 0, 0)\\
(p_{21})_{\varLambda}(\lambda_{j}, 0, 0)
&
(p_{21})_{\gamma}(\lambda_{j}, 0, 0)
\end{array}
\right]
=-(p_{11})_{\gamma}
(p_{21})_{\varLambda}(\lambda_{j}, 0, 0)\ne 0.
$$
Then the Fr\'echet derivative
$H_{(\varLambda, \gamma, \widetilde{h}, \widetilde{k})}
(\lambda_{j}, 0,
\widetilde{h}_{0}, \widetilde{k}_{0}, 0):
\mathbb{R}\times\mathbb{R}\times X_0\to\mathbb{R}\times
\mathbb{R}\times X_0$
is invertible.
We apply the Implicit Function Theorem to $F$ around
$(\lambda_{j},0,\widetilde{h}_{0}, \widetilde{k}_{0}, 0)$
to get the local curve $(\varLambda_{\varepsilon}, \gamma_{\varepsilon }, \widetilde{\phi}_{\varepsilon},
\widetilde{\psi}_{\varepsilon })\in\mathbb{R}^{2}\times X_0$ for any $\varepsilon \in [0,\overline{\varepsilon}]$
with small $\overline{\varepsilon}>0$ satisfying \eqref{eigenlim}.
Then the proof of Theorem \ref{thm5.1} is complete.
\end{proof}

\begin{proof}[Proof of Theorem \ref{instthm}]
By Theorem \ref{thm5.1},
the stationary solutions
$(\phi^{\pm}_{j,\varepsilon}(x), \psi^{\pm}_{j,\varepsilon}(x))$
of \eqref{she} are spectrally unstable if $\varepsilon\in
(0, \overline{\varepsilon}]$.
By virtue of the smoothness of the change of variables \eqref{change},
we see that the above spectral instability induces the spectral instability and nonlinear instability
of
$$
(u^{\pm}_{j,\varepsilon}(x), w^{\pm}_{j,\varepsilon}(x))=
\biggl(h_1(\phi^{\pm}_{j,\varepsilon}(x),\psi^{\pm}_{j,\varepsilon}(x) ,\frac{1}{\varepsilon}h_2(\phi^{\pm}_{j,\varepsilon}(x),\psi^{\pm}_{j,\varepsilon}(x))\biggr)
$$
to the evolutional system \eqref{2.2*}.
The proof of Theorem \ref{instthm} is complete.
\end{proof}

\section{Existence and instability  of coexistence states of the SKT model with large cross diffusion}
\subsection{Perturbation of positive stationary solutions of the 2nd shadow system}
In this section, we construct the branch of positive solutions to the original stationary SKT system \eqref{sSKT}
as the perturbation of the branch of solutions  of the 2nd shadow system \eqref{LNlim2}
when $\alpha$ is sufficiently large and $\varepsilon=a_2\lambda_j^{-1}-d_2$ is small.
In view of Theorem \ref{LNthm}(ii),
it is natural to set
$w=\alpha v$ in the  SKT model \eqref{sSKT}, which is a  perturbed system of the 2nd shadow system \eqref{LNlim2}.
Hence \eqref{sSKT}  is reduced to
\begin{equation}\label{sSKT0}
\begin{cases}
d_1\Delta[\,(1+w)u\,]+u(a_{1}-b_{1}u-\eta\, c_{1}w)=0\ \ &\mbox{in}\
\Omega,\\
d_{2}\Delta [(1+\beta u)w]+w(a_{2}-b_{2}u-\eta\, c_{2}w)=0\ \ &\mbox{in}\
\Omega,\\
\frac{\partial u}{\partial \nu }=\frac{\partial w}{\partial \nu }=0\ \ &\mbox{on}\
\partial\Omega,\\
u\ge 0,\ w\ge 0
\ \ &\mbox{in}\ \Omega,
\end{cases}
\end{equation}
with $\eta=1/\alpha$.
Our aim is to construct positive solutions of \eqref{sSKT0}
in the case when $\eta=1/\alpha$ and
$\varepsilon=a_2\lambda_j^{-1}-d_2>0$ are small enough.
For this end,
we use the change of variables \eqref{change} i.e.
$$
\tilde{w}=\varepsilon w,~~\phi=\varepsilon(1+w)u=(\varepsilon+\tilde{w})u,~
\psi=(1+\beta u)\tilde{w}
$$
to reduce
\eqref{sSKT0} to the following semilinear elliptic system:
\begin{equation}\label{nsemi2}
\begin{cases}
\Delta\phi +\varepsilon f_{1}(\phi, \psi, \varepsilon )-
\eta\, r_{1}(\phi, \psi, \varepsilon)
=0
\ \ &\mbox{in}\ \Omega,\\
\Delta\psi + \lambda_{j}\psi -\left(\beta+\frac{b_{2}\lambda_{j}}{a_{2}}\right)\phi
+\varepsilon f_{2}(\phi, \psi, \varepsilon )
-\eta\,
r_{2}(\phi, \psi, \varepsilon)
=0
\ \ &\mbox{in}\ \Omega,\\
\frac{\partial \phi}{\partial \nu }=\frac{\partial \psi}{\partial \nu }=0\ \ &\mbox{on}\
\partial\Omega,\\
\end{cases}
\end{equation}
where $f_1$ and $f_2$ are defined by \eqref{3},
$$
r_{1}(\phi, \psi, \varepsilon ):=
\dfrac{\varepsilon c_{1}h_{1\varepsilon}(\phi,\psi)h_{2\varepsilon}(\phi,\psi)}{d_{1}},\qquad
r_{2}(\phi, \psi, \varepsilon ):=
\dfrac{c_{2}h^2_{2\varepsilon}(\phi,\psi)\lambda_{j}}
{ \varepsilon (a_{2}-\varepsilon\lambda_{j})},
$$
with $h_{1\varepsilon}$ and $h_{2\varepsilon}$ defined by \eqref{2.5}.
Then \eqref{nsemi2} can be represented as the equation:
\begin{equation}\label{r12}
    \mathcal{L}\biggl[\begin{array}{c}
    \phi\\
    \psi\end{array}\biggr]+\varepsilon\biggl[\begin{array}{l}
    f_{1}(\phi,\psi,\varepsilon )\\
    f_{2}(\phi,\psi, \varepsilon )
    \end{array}\biggr]-\eta \biggl[\begin{array}{l}
    r_{1}(\phi,\psi,\varepsilon )\\
    r_{2}(\phi,\psi, \varepsilon )
    \end{array}\biggr]=
    \biggl[\begin{array}{c}
    0\\
    0\end{array}\biggr].
\end{equation}
Following the Lyapunov-Schmidt reduction framework in
the previous section,
we employ the decomposition \eqref{2.16}
for the unknown functions $(\phi, \psi)$
of \eqref{r12}, i.e.
$$
\biggl[\begin{array}{c}
    \phi(x)\\
    \psi(x)
    \end{array}\biggr]=s
    \biggl[\begin{array}{c}
    1\\
    \beta+\frac{b_{2}}{a_{2}}(1+\mu\varphi_{j}(x))
    \end{array}\biggr]+\varepsilon
    \biggl[\begin{array}{c}
    \phi^*(x)\\
    \psi^*(x)
    \end{array}\biggr].
$$
Then, \eqref{r12} can also be decomposed into three subspaces,
$\mathrm{Span}\{\boldsymbol{e}_{1}\}$,
$\mathrm{Span}\{\boldsymbol{e}_{2}\}$,
and $Y_{0}$, as follows:
\begin{equation}\label{K=0}
K(s,\mu,\boldsymbol{\varPhi}^*,\varepsilon,\eta)=0,
\end{equation}
with the operator $K\,:\,\mathbb{R}^{2}\times X_0\times \mathbb{R}^{2}\to\mathbb{R}^{2}\times X_0$ is defined by
\begin{equation}\label{Kdef}
    K(s,\mu,\boldsymbol{\varPhi}^*,\varepsilon,\eta):=G(s,\mu,\boldsymbol{\varPhi}^*,\varepsilon)-
    \dfrac{\eta}{\varepsilon}
    \left[\begin{array}{c}
    \kappa_{1}(s, \mu,\boldsymbol{\varPhi}^*,\varepsilon)\\\kappa_{2}(s,\mu,\boldsymbol{\varPhi}^*,\varepsilon)\\\kappa_{3}(s,\mu,\boldsymbol{\varPhi}^*,\varepsilon)\\\end{array}\right],
\end{equation}
where
$G(s,\mu,\boldsymbol{\varPhi}^*,\varepsilon)$
is defined by \eqref{gcomp} and \eqref{G=0},
and
$\kappa_{i}(s, \mu, \boldsymbol{\varPhi}^{*}, \varepsilon , \eta)$ are
defined by
\begin{equation}
       \kappa_{i}(s,\mu,\boldsymbol{\varPhi}^*, \varepsilon)\boldsymbol{e}_{i}=P_{i}R^*(s,\mu,\phi^*,\psi^*,\varepsilon )
        \quad\mbox{for}\ i=1,2
\end{equation}
with
\begin{equation}
   R^*(s,\mu,\phi^*,\psi^*,\varepsilon ):=R\left( s+\varepsilon\phi^*(x), s\left(\beta+\frac{b_{2}}{a_2}(1+\mu\varphi_{j}(x)
)\right)+\varepsilon\psi^*(x)),\varepsilon \right),
\end{equation}
and
$$    R(\phi, \psi, \varepsilon):=\biggl[
    \begin{array}{c}
    r_{1}(\phi, \psi, \varepsilon )\\
    r_{2}(\phi, \psi, \varepsilon )
    \end{array}
    \biggr],
$$
whereas $\kappa_{3}(s, \mu, \boldsymbol{\varPhi}^*, \varepsilon)$ is defined by
\begin{equation}
    \kappa_{3}(s, \mu, \boldsymbol{\varPhi}^*, \varepsilon, \eta ):=
    \mathcal{L}_{X_0}^{-1}(I-P)
    R^*(s,\mu,{\varPhi}^*, \varepsilon ).
    \nonumber
\end{equation}
\begin{lem}\label{nondeglem2}
Let $(s^{\pm}_{j}(\varepsilon ), \mu^{\pm}_{j}(\varepsilon ),
\boldsymbol{\varPhi}^{*\pm}_{j}(\varepsilon ))\in\mathbb{R}^2\times X_0$ be the
continuously differentiable function in \eqref{impf}.
Then there exists a small positive number $\overline{\varepsilon}$ such that,
for any $\underline{\varepsilon}\in (0,\overline{\varepsilon })$,
there exists $\delta=\delta(\underline{\varepsilon })>0$ such that
if $\varepsilon\in (\underline{\varepsilon }/2, 2\overline{\varepsilon })$ and
$\eta\in [0,\delta )$, then
$$
K_{(s,\mu, \boldsymbol{\varPhi}^*)} (s^{\pm}_{j}(\varepsilon ),
\mu^{\pm}_{j}(\varepsilon ),
\boldsymbol{\varPhi}^{*\pm}_{j}(\varepsilon ), \varepsilon, \eta )\,:\,
\mathbb{R}^{2}\times X_0
\to\mathbb{R}^{2}\times X_0
$$
is invertible.
\end{lem}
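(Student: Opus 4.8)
The plan is to treat the Fr\'echet derivative $K_{(s,\mu,\boldsymbol{\varPhi}^*)}$ as a perturbation of $G_{(s,\mu,\boldsymbol{\varPhi}^*)}$, whose invertibility along the branch is already under control from the proof of Theorem~\ref{LSprop}, and then close the argument with a Neumann series. Differentiating \eqref{Kdef} with respect to $(s,\mu,\boldsymbol{\varPhi}^*)$ gives
\[
K_{(s,\mu,\boldsymbol{\varPhi}^*)}
=G_{(s,\mu,\boldsymbol{\varPhi}^*)}
-\frac{\eta}{\varepsilon}\,\mathcal{R}_{\varepsilon},
\]
where $\mathcal{R}_{\varepsilon}$ denotes the Fr\'echet derivative of $(\kappa_{1},\kappa_{2},\kappa_{3})$ with respect to $(s,\mu,\boldsymbol{\varPhi}^*)$, evaluated at the branch point $(s^{\pm}_{j}(\varepsilon),\mu^{\pm}_{j}(\varepsilon),\boldsymbol{\varPhi}^{*\pm}_{j}(\varepsilon))$. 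Thus the task reduces to a uniform lower bound on $G_{(s,\mu,\boldsymbol{\varPhi}^*)}$ and a uniform upper bound on $\mathcal{R}_{\varepsilon}$, after which the factor $\eta/\varepsilon$ must be shown to be negligible.

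For the first bound, recall from Lemma~\ref{nondeglem} that $G_{(s,\mu,\boldsymbol{\varPhi}^*)}(s^{\pm}_{j,0},\mu^{\pm}_{j,0},\boldsymbol{\varPhi}^{*\pm}_{j,0},0)$ is invertible. Since $G$ is of $C^{1}$-class and $\varepsilon\mapsto(s^{\pm}_{j}(\varepsilon),\mu^{\pm}_{j}(\varepsilon),\boldsymbol{\varPhi}^{*\pm}_{j}(\varepsilon))$ is $C^{1}$ on the existence interval by \eqref{impf}, the operator-valued map $\varepsilon\mapsto G_{(s,\mu,\boldsymbol{\varPhi}^*)}$ evaluated along the branch is continuous; as invertibility is open with continuously varying inverse, after choosing $\overline{\varepsilon}$ small enough (at most half the existence threshold in Theorem~\ref{LSprop}) there is a constant $M>0$ with $\|G_{(s,\mu,\boldsymbol{\varPhi}^*)}^{-1}\|\le M$ uniformly for $\varepsilon\in[0,2\overline{\varepsilon}]$. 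Along this compact interval $s^{\pm}_{j}(\varepsilon)$ stays bounded and bounded away from $0$, while $l_{0}=1+\mu^{\pm}_{j,0}\varphi_{j}>0$ (recall $m_{j}<\mu^{\pm}_{j,0}<M_{j}$), so the arguments of $h_{1\varepsilon},h_{2\varepsilon}$ remain in the positivity region and all the nonlinearities together with their $(s,\mu,\boldsymbol{\varPhi}^*)$-derivatives are uniformly bounded there.

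For the second bound, the only source of an $\varepsilon$-singularity is $r_{2}=c_{2}h_{2\varepsilon}^{2}\lambda_{j}/\bigl(\varepsilon(a_{2}-\varepsilon\lambda_{j})\bigr)$, which carries an explicit factor $1/\varepsilon$; since $\mathcal{L}_{X_0}^{-1}$ and the projections $P_{i}$ are bounded and the remaining factors are uniformly bounded by the preceding remark, one obtains $\|\mathcal{R}_{\varepsilon}\|\le C/\varepsilon$ with $C$ independent of $\varepsilon$ and $\eta$ on the interval. Consequently, for $\varepsilon>\underline{\varepsilon}/2$,
\[
\Bigl\|\tfrac{\eta}{\varepsilon}\,G_{(s,\mu,\boldsymbol{\varPhi}^*)}^{-1}\mathcal{R}_{\varepsilon}\Bigr\|
\le \frac{MC\,\eta}{\varepsilon^{2}}
<\frac{4MC}{\underline{\varepsilon}^{2}}\,\eta .
\]

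The main obstacle, and precisely the reason for the nested quantifiers in the statement, is this $\eta/\varepsilon^{2}$ blow-up: the perturbation is not uniformly small as $\varepsilon\downarrow0$, so one cannot take $\delta$ independent of $\underline{\varepsilon}$. The resolution is to fix the lower scale $\underline{\varepsilon}$ first and then force $\eta=1/\alpha$ to be much smaller than $\varepsilon^{2}$: choosing $\delta=\delta(\underline{\varepsilon}):=\underline{\varepsilon}^{2}/(8MC)$ makes the displayed quantity less than $1/2$ for all $\eta\in[0,\delta)$ and $\varepsilon\in(\underline{\varepsilon}/2,2\overline{\varepsilon})$. Factoring
\[
K_{(s,\mu,\boldsymbol{\varPhi}^*)}
=G_{(s,\mu,\boldsymbol{\varPhi}^*)}\Bigl(I-\tfrac{\eta}{\varepsilon}\,G_{(s,\mu,\boldsymbol{\varPhi}^*)}^{-1}\mathcal{R}_{\varepsilon}\Bigr)
\]
and inverting the second factor by the Neumann series then shows that $K_{(s,\mu,\boldsymbol{\varPhi}^*)}$ is invertible on $\mathbb{R}^{2}\times X_{0}$, which completes the proof.
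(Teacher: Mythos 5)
Your proposal is correct and follows essentially the same route as the paper: invertibility of $G_{(s,\mu,\boldsymbol{\varPhi}^*)}$ at $(s^{\pm}_{j,0},\mu^{\pm}_{j,0},\boldsymbol{\varPhi}^{*\pm}_{j,0},0)$ from Lemma \ref{nondeglem}, persistence along the branch by resolvent perturbation, and then absorption of the $\frac{\eta}{\varepsilon}$-term by choosing $\delta=\delta(\underline{\varepsilon})$ small. Your write-up merely makes quantitative what the paper leaves implicit (the uniform bound on $G_{(s,\mu,\boldsymbol{\varPhi}^*)}^{-1}$, the $O(1/\varepsilon)$ bound on the derivative of $(\kappa_1,\kappa_2,\kappa_3)$ coming from $r_2$, and the resulting $\eta/\varepsilon^{2}$ smallness condition closed by a Neumann series), which is a faithful expansion of the paper's argument rather than a different one.
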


\begin{proof}
We recall that
$G_{(s,\mu,\boldsymbol{\varPhi}^*)}(s^{\pm}_{j,0}, \mu^{\pm}_{j,0},
\boldsymbol{\varPhi}^{\pm}_{j,0}, 0)\,:\,
\mathbb{R}^{2}\times X_0\to \mathbb{R}^{2}\times X_0$
is an isomorphism by Lemma \ref{nondeglem}.
By virtue of the perturbation property of the resolvent,
we can see that
$G_{(s,\mu,\boldsymbol{\varPhi}^*)}
(s^{\pm}_{j}(\varepsilon ), \mu^{\pm}_{j}(\varepsilon ),
\boldsymbol{\varPhi}^{\pm}_{j,0}, \varepsilon )$
is also an isomorphism if $\varepsilon \in
(0, \overline{\varepsilon })$
with some small $\overline{\varepsilon }>0$.
Therefore, in view of \eqref{Kdef},
for any $\underline{\varepsilon }\in (0,\overline{\varepsilon })$,
we can find a small $\delta =\delta (\underline{\varepsilon })>0$ such that
$K_{(s,\mu,\boldsymbol{\varPhi}^*)}
(s^{\pm}_{j}(\varepsilon ), \mu^{\pm}_{j}(\varepsilon ),
\boldsymbol{\varPhi}^{*\pm}_{j}(\varepsilon ), \varepsilon, \eta )$ is invertible
if $\varepsilon\in (\underline{\varepsilon}/2, 2\overline{\varepsilon})$
and $\eta\in (0,\delta)$.
\end{proof}
Let $\varepsilon\in (\underline{\varepsilon }, \overline{\varepsilon })$
be fixed arbitrary.
In view of \eqref{impf} and Lemma \ref{nondeglem2},
we know that
$$K(s^{\pm}_{j}(\varepsilon ), \mu^{\pm}_{j}(\varepsilon ),
\boldsymbol{\varPhi}^{*\pm}_{j}(\varepsilon ), \varepsilon, 0)=
G(s^{\pm}_{j}(\varepsilon ), \mu^{\pm}_{j}(\varepsilon ),
\boldsymbol{\varPhi}^{*\pm}_{j}(\varepsilon ), \varepsilon)=0$$
and $K_{(s,\mu,\boldsymbol{\varPhi}^*)}
(s^{\pm}_{j}(\varepsilon ), \mu^{\pm}_{j}(\varepsilon )
\boldsymbol{\varPhi}^{*\pm}_{j}(\varepsilon ), \varepsilon, 0)$
is invertible.
Therefore, the Implicit Function Theorem
yields a pair of neighborhoods
$\mathcal{U}^{\pm}$ of
$(s_{j}^{\pm}(\varepsilon ),
\mu_{j}^{\pm}(\varepsilon ),
\boldsymbol{\varPhi}^{*\pm}_{j}(\varepsilon ), \varepsilon, 0)$,
respectively,
small positive constants $\delta_1(\varepsilon)\in (0,\varepsilon)$
and $\delta_2(\underline{\varepsilon})$ such that
all the solutions in $\mathcal{U}^{\pm}$
of $K(s,\mu, \boldsymbol{\varPhi}^*, \varepsilon', \eta )=0$
can be represented as
\begin{equation}\label{curves}
    (s,\mu,\boldsymbol{\varPhi}^*)
=(s^{\pm}_{j}(\varepsilon', \eta), \mu^{\pm}_{j}(\varepsilon', \eta),
\boldsymbol{\varPhi}^{*\pm}_{j}(\varepsilon', \eta ))
\end{equation}
for any $(\varepsilon', \eta)\in (\varepsilon-\delta_1, \varepsilon+\delta_1)
\times (0, \delta_2)$.
Here
$(s^{\pm}_{j}(\varepsilon', \eta), \mu^{\pm}_{j}(\varepsilon', \eta),
\boldsymbol{\varPhi}^{*\pm}_{j}(\varepsilon', \eta ))$
is continuously differentiable in $\varepsilon'$ and $\eta$, and satisfies
$$(s^{\pm}_{j}(\varepsilon', 0), \mu^{\pm}_{j}(\varepsilon', 0),
\boldsymbol{\varPhi}^{*\pm}_{j}(\varepsilon', 0 ))=
(s^{\pm}_{j}(\varepsilon'), \mu^{\pm}_{j}(\varepsilon' ),
\boldsymbol{\varPhi}^{*\pm}_{j}(\varepsilon' )).$$
Thus we obtain the following existence result for the branch of
positive solutions of \eqref{nsemi2}:
\begin{thm}\label{thm6.1}
Assume that $N\le 4$, $A>B$
and $\lambda_{j}\in\mathbb{N}$ is simple.
For any such $j$ and each sign of $\pm$,
there exists $\overline{\varepsilon}>0$ such that
for any $\underline{\varepsilon}\in (0,\overline{\varepsilon })$,
there exists $\delta^{*}=\delta^{*}(\underline{\varepsilon })$ such that
if $\varepsilon\in (\underline{\varepsilon }, \overline{\varepsilon })$
and $\eta\in (0,\delta^{*})$,
\eqref{nsemi2}
has positive solutions
$
(\phi^{\pm}_{j}(x;\varepsilon, \eta ), \psi^{\pm}_{j}(x; \varepsilon, \eta ))
$
satisfying
\begin{equation}\label{ppe2}
    \biggl[
    \begin{array}{c}
    \phi^{\pm}_{j}(x; \varepsilon, \eta)\\
    \psi^{\pm}_{j}(x; \varepsilon, \eta)
    \end{array}
    \biggr]
    =s^{\pm}_{j}(\varepsilon , \eta )\biggl(
    \biggl[
    \begin{array}{c}
    1\\
    \beta+\frac{b_{2}}{a_2}
    \end{array}
    \biggr]
    +\mu^{\pm}_{j}(\varepsilon, \eta )
    \biggl[
    \begin{array}{c}
    0\\
    \frac{b_{2}}{a_2}\varphi_{j}(x)
    \end{array}
    \biggr]\biggr)
    +\varepsilon\biggl[
    \begin{array}{c}
    \widetilde{\phi}^{\pm}_{j}(x; \varepsilon, \eta)\\
    \widetilde{\psi}^{\pm}_{j}(x; \varepsilon, \eta)
    \end{array}
    \biggr],
\end{equation}
where $\boldsymbol{\varPhi}^{*\pm}_{j}(\varepsilon, \eta ):=
(\widetilde{\phi}^{\pm}_{j}(\varepsilon, \eta ),
\widetilde{\psi}^{\pm}_{j}(\varepsilon, \eta ))\in X_0$
and $(s^{\pm}_{j}(\varepsilon , \eta ), \mu^{\pm}_{j}(\varepsilon , \eta))\in \mathbb{R}^{2}$ are functions of $C^{1}$-class
for $(\varepsilon, \eta )\in (\underline{\varepsilon}, \overline{\varepsilon })\times [0, \delta^{*})$
satisfying
\begin{equation}
        (s^{\pm}_{j}(\varepsilon, 0),\mu^{\pm}_{j}(\varepsilon, 0))=(s^{\pm}_{j}(\varepsilon ), \mu^{\pm}_{j}(\varepsilon ))
        \ \ \mbox{and}\ \
        \boldsymbol{\varPhi}^{*\pm}_{j}(\varepsilon , 0)= \boldsymbol{\varPhi}^{*\pm}_{j}(\varepsilon ),
\nonumber
\end{equation}
where $(s^{\pm}_{j}(\varepsilon ), \mu^{\pm}_{j}(\varepsilon ), \boldsymbol{\varPhi}^{*\pm}_{j} (\varepsilon ))$
are defined by Theorem \ref{LSprop}.
\end{thm}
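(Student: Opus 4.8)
The plan is to read off Theorem~\ref{thm6.1} as the conclusion of the Lyapunov--Schmidt reduction already set up for \eqref{nsemi2}, namely the equation $K(s,\mu,\boldsymbol{\varPhi}^*,\varepsilon,\eta)=0$ in \eqref{K=0} and \eqref{Kdef}, by applying the Implicit Function Theorem in the parameter $\eta$ along the shadow-system branch and then upgrading the resulting pointwise radius of validity to one uniform in $\varepsilon$. First I would record the anchor point: since $K=G$ when $\eta=0$ and $G$ vanishes on the branch $(s^{\pm}_{j}(\varepsilon),\mu^{\pm}_{j}(\varepsilon),\boldsymbol{\varPhi}^{*\pm}_{j}(\varepsilon))$ constructed in Theorem~\ref{LSprop} (see \eqref{impf}), the point $(s^{\pm}_{j}(\varepsilon),\mu^{\pm}_{j}(\varepsilon),\boldsymbol{\varPhi}^{*\pm}_{j}(\varepsilon),\varepsilon,0)$ is an exact solution of $K=0$ for every $\varepsilon\in(\underline{\varepsilon},\overline{\varepsilon})$. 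Lemma~\ref{nondeglem2} supplies the nondegeneracy: the partial Fr\'echet derivative $K_{(s,\mu,\boldsymbol{\varPhi}^*)}$ is an isomorphism of $\mathbb{R}^2\times X_0$ along this branch for all such $\varepsilon$ and all small $\eta$, so the hypotheses of the Implicit Function Theorem hold at each anchor point, yielding the local solution curve \eqref{curves}.

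The substance is to make the radius of validity uniform in $\varepsilon$. For a fixed $\varepsilon$ the Implicit Function Theorem produces a solution only for $\eta\in(0,\delta_2(\varepsilon))$, and a priori $\delta_2$ could collapse to $0$ as $\varepsilon$ ranges over $(\underline{\varepsilon},\overline{\varepsilon})$. I would rule this out by a compactness argument: the anchor branch $\varepsilon\mapsto(s^{\pm}_{j}(\varepsilon),\mu^{\pm}_{j}(\varepsilon),\boldsymbol{\varPhi}^{*\pm}_{j}(\varepsilon))$ is $C^1$ on the compact interval $[\underline{\varepsilon},\overline{\varepsilon}]\subset(\underline{\varepsilon}/2,2\overline{\varepsilon})$, and on this compact set the isomorphism $K_{(s,\mu,\boldsymbol{\varPhi}^*)}$ has operator norm and inverse-norm bounded uniformly. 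Covering $[\underline{\varepsilon},\overline{\varepsilon}]$ by finitely many Implicit-Function-Theorem neighborhoods and taking the minimum of the corresponding $\delta_2$'s produces a single $\delta^*=\delta^*(\underline{\varepsilon})>0$ valid throughout, together with a solution $(s^{\pm}_{j}(\varepsilon,\eta),\mu^{\pm}_{j}(\varepsilon,\eta),\boldsymbol{\varPhi}^{*\pm}_{j}(\varepsilon,\eta))$ that is $C^1$ in $(\varepsilon,\eta)$ and reduces to the shadow branch at $\eta=0$. Unwinding the decomposition \eqref{2.16} then gives precisely the representation \eqref{ppe2}.

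I expect the genuine obstacle to be the control of the singular factor $\eta/\varepsilon$ in front of $[\kappa_1,\kappa_2,\kappa_3]$ in \eqref{Kdef}. Because $w\sim\varepsilon^{-1}$ on the branch (Theorem~\ref{blthm}), the term $r_2$ carries an explicit $1/\varepsilon$, so the perturbation $\tfrac{\eta}{\varepsilon}R$ and its $(s,\mu,\boldsymbol{\varPhi}^*)$-derivatives are of size $O(\eta/\varepsilon^2)$ rather than $O(\eta)$; this is exactly the reason $\varepsilon$ must be kept bounded below by $\underline{\varepsilon}>0$ and why $\delta^*$ is forced to depend on $\underline{\varepsilon}$, scaling like $\underline{\varepsilon}^2$. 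I would therefore track these derivative bounds explicitly, using that $h_{1\varepsilon},h_{2\varepsilon}$ from \eqref{2.5} and their derivatives are bounded on the relevant range so that $\|\tfrac{\eta}{\varepsilon}\kappa_i\|$ is uniformly small once $\eta<\delta^*\sim\underline{\varepsilon}^2$; this both validates the uniform Implicit Function Theorem and keeps the perturbation within the neighborhood of the branch. Finally, positivity follows from continuity: Theorem~\ref{blthm} gives $u^{\pm}_{j,\varepsilon}>0$ and $\varepsilon w^{\pm}_{j,\varepsilon}>0$ strictly and bounded away from $0$ on $\overline{\Omega}$, so $(\phi,\psi)$ stays in the region where $h_{1\varepsilon},h_{2\varepsilon}>0$, and shrinking $\delta^*$ if necessary preserves $u=h_{1\varepsilon}(\phi,\psi)>0$ and $w=\varepsilon^{-1}h_{2\varepsilon}(\phi,\psi)>0$ throughout.
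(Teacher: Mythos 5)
Your proposal is correct and follows essentially the same route as the paper: anchor at the $\eta=0$ branch from Theorem \ref{LSprop}, invoke the uniform invertibility of $K_{(s,\mu,\boldsymbol{\varPhi}^*)}$ supplied by Lemma \ref{nondeglem2}, and apply the Implicit Function Theorem to $K=0$ from \eqref{Kdef} to obtain the solution curve \eqref{curves} and hence the representation \eqref{ppe2}. Your explicit compactness/covering argument producing a single $\delta^*(\underline{\varepsilon})$, and your tracking of the $O(\eta/\varepsilon^{2})$ size of the perturbation $\tfrac{\eta}{\varepsilon}R$ (coming from the factor $1/\varepsilon$ in $r_2$), merely fill in details that the paper leaves implicit when it asserts that $\delta_2$ depends only on $\underline{\varepsilon}$.
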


\subsection{Perturbation of unstable eigenvalues of the linearized problem
for the 2nd shadow system}
By setting $w=\alpha v$ in the original evolutional SKT  system \eqref{SKT},
we obtain
$$
\begin{cases}
u_{t}=\Delta[\,(d_{1}+w)u\,]+u(a_{1}-b_{1}u-\eta\,c_{1}w)\ \ &\mbox{in}\
\Omega\times (0,T),\\
w_{t}=d_{2}\Delta [(1+\beta u)w]+w(a_{2}-b_{2}u-\eta\,c_{2}w)\ \ &\mbox{in}\
\Omega\times (0,T),\\
\frac{\partial u}{\partial \nu }=\frac{\partial w}{\partial \nu }=0\ \ &\mbox{on}\
\partial\Omega\times(0,T),\\
\end{cases}
$$
where $\eta =1/\alpha $.
Furthermore, the change of variables \eqref{change} reduces the
above system to
\begin{equation}\label{evol}
\dfrac{\partial}{\partial t}
\biggl[
\begin{array}{c}
\frac{\varepsilon}{d_1}
h_{1\varepsilon}(\phi, \psi)\\
\frac{1}{d_2}
h_{2\varepsilon}(\phi, \psi)
\end{array}
\biggr]
=
\mathcal{L}
\biggl[
\begin{array}{c}
\phi\\
\psi
\end{array}
\biggr]
+
\varepsilon\biggl[
\begin{array}{c}
f_{1}(\phi, \psi, \varepsilon )\\
f_{2}(\phi, \psi, \varepsilon )
\end{array}
\biggr]
-\eta\biggl[
\begin{array}{c}
r_{1}(\phi, \psi, \varepsilon )\\
r_{2}(\phi, \psi, \varepsilon )
\end{array}
\biggr]
\end{equation}
In view of Theorem  \ref{thm6.1}, we denote
$$
(\phi_{\varepsilon, \eta}(x), \psi_{\varepsilon,  \eta}(x)):=
(\phi^{\pm}_{j}(x;\varepsilon, \eta), \psi^{\pm}_{j}(x;\varepsilon, \eta))
\quad\mbox{for}\quad
(\varepsilon, \eta )\in(\underline{\varepsilon}, \overline{\varepsilon})\times
[0, \delta^{*})
$$
by the stationary solutions of  system \eqref{evol}.
To investigate the  spectral  stability/instability of
$
(\phi_{\varepsilon, \eta}, \psi_{\varepsilon, \eta})$,
we consider the  linearized evolutional system of \eqref{evol} around $(\phi_{\varepsilon, \eta}, \psi_{\varepsilon, \eta})$, and the corresponding  eigenvalue problem of  the linearized evolutional system is as follows:
\begin{equation}\label{egeq2}
\sigma T^{\varepsilon, \eta}(x)\biggl[
\begin{array}{c}
\phi\\
\psi
\end{array}
\biggr]
=
\mathcal{L}
\biggl[
\begin{array}{c}
\phi\\
\psi
\end{array}
\biggr]
+(\varepsilon F^{\varepsilon, \eta}_{(\phi,\psi)}(x)
-\eta R^{\varepsilon, \eta}_{(\phi, \psi )}(x))
\biggl[
\begin{array}{c}
\phi\\
\psi
\end{array}
\biggr],
\end{equation}
where
$T^{\varepsilon, \eta}(x)$ and $F^{\varepsilon, \eta}_{(\phi, \psi)}(x)$
are respectively defined as \eqref{Mdef} and \eqref{Jacobi} with
$(\phi_{\varepsilon}, \psi_{\varepsilon })$ replaced by
$(\phi_{\varepsilon, \eta}, \psi_{\varepsilon, \eta})$.
As the $\eta$-perturbation of Theorem \ref{thm5.1},
we obtain the spectral instability of
$(\phi_{\varepsilon, \eta}, \psi_{\varepsilon, \eta})$ as follows:
\begin{thm}\label{thm6.2}
Assume that $N\le 4$, $A>B$ and $\lambda_{j}$
is simple.
For any such $j\ge 1$ and each sign of $\pm$,
the eigenvalue problem \eqref{egeq2} has a
positive eigenvalue
\begin{equation}\label{eigenv2}
    \sigma^{\pm}_{j}(\varepsilon, \eta)=
\varepsilon\varLambda^{\pm}_{j}(\varepsilon, \eta)
\end{equation}
with an eigenfunction
$(\widehat{\phi}_{\varepsilon, \eta}(x), \widehat{\psi}_{\varepsilon, \eta}(x)):=
(\widehat{\phi^{\pm}_{j}}(x; \varepsilon, \eta ), \widehat{\psi^{\pm}_{j}}(x; \varepsilon, \eta))$
expressed by
\begin{equation}\label{eigenf2}
    \biggl[\begin{array}{c}
    \widehat{\phi}_{\varepsilon, \eta}(x)\\
    \widehat{\psi}_{\varepsilon, \eta}(x)
    \end{array}
    \biggr]=\biggl[
    \begin{array}{c}
    1\\
    \beta+\frac{b_{2}}{a_2}\ell_0(x)
    \end{array}
    \biggr]
    +\gamma_{\varepsilon, \eta}
    \biggl[
    \begin{array}{c}
    1\\
    \beta+\frac{b_{2}}{a_2}
    \end{array}
    \biggr]
    +\varepsilon\biggl[
    \begin{array}{c}
    \widetilde{\phi}_{\varepsilon, \eta}(x)\\
    \widetilde{\psi}_{\varepsilon, \eta}(x)
    \end{array}\biggr]   ,
\end{equation}
where
\begin{equation}\label{elldef2}
\ell_0(x):=
    1+\mu^{\pm}_{j,0}\varphi_{j}(x).
\end{equation}
Here $\varLambda^\pm_j(\varepsilon, \eta)$,
$\gamma_{\varepsilon, \eta}\in\mathbb{R}$ and
$(\widetilde{\phi}_{\varepsilon, \eta},\widetilde{\psi}_{\varepsilon, \eta})\in Y_0$
are continuous for
$(\varepsilon, \eta)\in (\underline{\varepsilon}, \overline{\varepsilon})
\times [0,\delta^{*})$ and satisfy
\begin{equation}\label{eigenlim2}
    \varLambda_{\varepsilon, 0}=\varLambda^\pm(\varepsilon),\quad
        \gamma_{\varepsilon, 0} = \gamma^\pm(\varepsilon),
\quad (\widetilde{\phi}_{\varepsilon, 0}(x), \widetilde{\psi}_{\varepsilon, 0}(x))=
(\widetilde{\phi}^\pm (x,\varepsilon), \widetilde{\psi}^\pm(x,\varepsilon))
\end{equation}
with $(\varLambda^\pm(\varepsilon), \gamma^\pm(\varepsilon),
\widetilde{\phi}^\pm(x,\varepsilon), \widetilde{\psi}^\pm(x,\varepsilon ))$
obtained in Theorem \ref{thm5.1}.
\end{thm}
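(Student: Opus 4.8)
The plan is to regard the eigenvalue problem \eqref{egeq2} as an $\eta$-perturbation of the eigenvalue problem \eqref{eige} for the $2$nd shadow system, and to reproduce the Lyapunov--Schmidt reduction carried out in the proof of Theorem \ref{thm5.1}. First I would substitute the ansatz $\sigma=\varepsilon\varLambda$ together with the decomposition \eqref{eigenf2} into \eqref{egeq2}, split the resulting identity into its $\mbox{Span}\{\boldsymbol{e}_1\}$, $\mbox{Span}\{\boldsymbol{e}_2\}$ and $X_0$ components by means of $P_1$, $P_2$ and $I-P$, and thereby obtain a bifurcation system of the same shape as \eqref{LS22*}. The only new feature is the additional term $-\eta R^{\varepsilon,\eta}_{(\phi,\psi)}(x)$ acting on the eigenfunction; collecting it produces an operator
$$
\widehat{H}(\varLambda,\gamma,\widetilde{\phi},\widetilde{\psi},\varepsilon,\eta)
=H(\varLambda,\gamma,\widetilde{\phi},\widetilde{\psi},\varepsilon)-\eta\,\widehat{R}(\varLambda,\gamma,\widetilde{\phi},\widetilde{\psi},\varepsilon,\eta),
$$
where $H$ is exactly the map \eqref{Hdef} from the shadow-system analysis and $\widehat{R}$ collects the projections of $R^{\varepsilon,\eta}_{(\phi,\psi)}$ evaluated on the eigenfunction ansatz. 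In particular $\widehat{H}(\,\cdot\,,\varepsilon,0)=H(\,\cdot\,,\varepsilon)$, so that the solution set at $\eta=0$ is precisely the curve furnished by Theorem \ref{thm5.1}.

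Next, for each fixed $\varepsilon\in(\underline{\varepsilon},\overline{\varepsilon})$ I would apply the Implicit Function Theorem to $\widehat{H}$ in the variable $\eta$ around the point $(\varLambda^{\pm}(\varepsilon),\gamma^{\pm}(\varepsilon),\widetilde{\phi}^{\pm}(\,\cdot\,,\varepsilon),\widetilde{\psi}^{\pm}(\,\cdot\,,\varepsilon),\varepsilon,0)$ supplied by Theorem \ref{thm5.1}. The hypothesis to verify is the invertibility of the Fr\'echet derivative $\widehat{H}_{(\varLambda,\gamma,\widetilde{\phi},\widetilde{\psi})}$ at $\eta=0$; but this derivative coincides with $H_{(\varLambda,\gamma,\widetilde{\phi},\widetilde{\psi})}$, whose invertibility was established in the proof of Theorem \ref{thm5.1} through the nonvanishing determinant \eqref{nonzero2}. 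That computation was performed at $\varepsilon=0$; exactly as in the resolvent-perturbation argument of Lemma \ref{nondeglem2}, the invertibility persists, after possibly shrinking $\overline{\varepsilon}$, for all $\varepsilon\in(0,\overline{\varepsilon})$, and the operator norm of the inverse stays bounded on any subinterval bounded away from $0$. The Implicit Function Theorem then produces $(\varLambda_{\varepsilon,\eta},\gamma_{\varepsilon,\eta},\widetilde{\phi}_{\varepsilon,\eta},\widetilde{\psi}_{\varepsilon,\eta})$ solving $\widehat{H}=0$ for $\eta\in[0,\delta^{*})$, continuous in $(\varepsilon,\eta)$ and meeting the matching conditions \eqref{eigenlim2} at $\eta=0$; reversing the reduction recovers the eigenpair \eqref{eigenv2}--\eqref{eigenf2}. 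Positivity is then immediate: since $\varLambda^{\pm}(\varepsilon)\to\lambda_{j}>0$ as $\varepsilon\downarrow0$ by \eqref{eigenlim}, we have $\varLambda^{\pm}_{j}(\varepsilon,0)=\varLambda^{\pm}(\varepsilon)>0$ for small $\varepsilon$, and by continuity $\varLambda^{\pm}_{j}(\varepsilon,\eta)>0$ for $\eta$ small, whence $\sigma^{\pm}_{j}(\varepsilon,\eta)=\varepsilon\varLambda^{\pm}_{j}(\varepsilon,\eta)>0$.

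I expect the delicate point, just as in Lemma \ref{nondeglem2} and Theorem \ref{thm6.1}, to be the \emph{uniformity} of the threshold $\delta^{*}$ in $\eta$: it must depend only on $\underline{\varepsilon}$ and not on the individual $\varepsilon$. This is precisely where the lower bound $\varepsilon>\underline{\varepsilon}$ enters, because the nonlinearity $r_{2}$ carries the singular factor $1/[\varepsilon(a_{2}-\varepsilon\lambda_{j})]$; keeping $\varepsilon$ bounded away from $0$ controls this factor, so that $\widehat{R}$ and its $(\varLambda,\gamma,\widetilde{\phi},\widetilde{\psi})$-derivatives are bounded uniformly on $(\underline{\varepsilon},\overline{\varepsilon})\times[0,\delta^{*})$. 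Combined with a uniform lower bound for $\|H_{(\varLambda,\gamma,\widetilde{\phi},\widetilde{\psi})}^{-1}\|^{-1}$ over this interval, this furnishes a single $\delta^{*}=\delta^{*}(\underline{\varepsilon})$ for which the Implicit Function Theorem applies simultaneously for all admissible $\varepsilon$, completing the argument.
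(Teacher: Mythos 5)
Your proposal is correct and follows essentially the same route as the paper: the same ansatz $\sigma=\varepsilon\varLambda$ with decomposition \eqref{eigenf2}, the same Lyapunov--Schmidt reduction to a system of the form $H=\eta\cdot(\text{bounded remainder})$ with $H$ from \eqref{Hdef}, invertibility of the Fr\'echet derivative inherited from \eqref{nonzero2} via resolvent perturbation, and the Implicit Function Theorem applied at the $\eta=0$ solution furnished by Theorem \ref{thm5.1}, with the lower bound $\varepsilon>\underline{\varepsilon}$ taming the $\eta/\varepsilon$ factor so that $\delta^{*}$ depends only on $\underline{\varepsilon}$. Your explicit verification of positivity of $\varLambda^{\pm}_{j}(\varepsilon,\eta)$ and of the uniformity of $\delta^{*}$ is slightly more detailed than the paper's write-up, but the argument is the same.
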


\begin{proof}
Following the idea of the proof of Theorem \ref{thm5.1},
in order
to seek for the eigenpairs of \eqref{egeq2} in the form
\begin{equation}
\sigma =\varepsilon\varLambda
\quad\mbox{and}\quad
 \biggl[\begin{array}{c}
    \phi(x)\\
    \psi(x)
    \end{array}
    \biggr]=\biggl[
    \begin{array}{c}
    1\\
    \beta+\frac{b_{2}}{a_2}\ell_0(x)
    \end{array}
    \biggr]
    +\gamma
    \biggl[
    \begin{array}{c}
    1\\
    \beta+\frac{b_{2}}{a_2}
    \end{array}
    \biggr]
    +\varepsilon\biggl[
    \begin{array}{c}
    \widetilde{\phi}(x)\\
    \widetilde{\psi}(x)
    \end{array}\biggr] ,
    \nonumber
\end{equation}
we substitute them into \eqref{egeq2} to get
the reduced system
\begin{equation}\label{egp3}
\begin{array}{c}
    \mathcal{L}\biggl[
    \begin{array}{c}
    \widetilde{\phi}\\
    \widetilde{\psi}
    \end{array}
    \biggr]
    =\\
    \biggl(\varLambda T^{\varepsilon, \eta}(x)-
F^{\varepsilon, \eta}_{(\phi, \psi)}(x)
+\dfrac{\eta}{\varepsilon}R^{\varepsilon, \eta}_{(\phi, \psi )}(x)\biggr)
    \biggl[
    \begin{array}{l}
    1+\gamma +\varepsilon
    \widetilde{\phi}(x)\\
    (1+\gamma)\left(\beta+\frac{b_{2}}{a_2}\right)+\frac{b_{2}}{a_2}\mu^\pm_{j,0}\varphi_j(x)+\varepsilon
    \widetilde{\psi}(x)
    \end{array}
    \biggr].
    \end{array}
\end{equation}
By applying the Lyapunov-Schmidt reduction argument
as in the proof of Theorem \ref{thm5.1},
we reduce
\eqref{egp3} to the system
\begin{equation}\label{Heq2}
\begin{array}{c}
H(\varLambda, \gamma, \widetilde{\phi}, \widetilde{\psi}, \varepsilon)\\
=-\dfrac{\eta}{\varepsilon} \left[
\begin{array}{l}
   \rho_{1}(\lambda, \gamma, \widetilde{\phi}, \widetilde{\psi}, \varepsilon)\vspace{1mm}\\
   \rho_{2}(\lambda, \gamma, \widetilde{\phi}, \widetilde{\psi}, \varepsilon)\vspace{1mm}\\
    -
    \mathcal{L}_{X_0}^{-1}(I-P)R^{\varepsilon}_{(\phi, \psi)}(x)
    \biggl[
    \begin{array}{l}
    1+\gamma +\varepsilon
    \widetilde{\phi}\\
     \beta+\frac{b_{2}}{a_2}\ell_0(x)+\gamma(\beta+\frac{b_{2}}{a_2})+\varepsilon
    \widetilde{\psi}
    \end{array}
    \biggr]
\end{array}
\right]
\end{array}
\end{equation}
where
$H$ is the operator defined by \eqref{Hdef} and
$$
\rho_{i}(\lambda, \gamma, \widetilde{\phi}, \widetilde{\psi}, \varepsilon)
\boldsymbol{e}_{i}:=P_{i}
\biggl(
R^{\varepsilon, \eta }_{(\phi, \psi )}(x)
\biggl[
    \begin{array}{l}
    1+\gamma +\varepsilon\widetilde{\phi}\\
    (1+\gamma)\left(\beta+\frac{b_{2}}{a_2}\right)+\frac{b_{2}}{a_2}\mu^\pm_{j,0}\varphi_j(x)+\varepsilon
    \widetilde{\psi}
    \end{array}
    \biggr]\biggr).
$$
By \eqref{nonzero2}, we recall that
$H_{(\Lambda, \gamma, \widetilde{\phi}, \widetilde{\psi})}
(\lambda_{j}, 0, \widetilde{\phi}_{0}, \widetilde{\psi}_{0}, 0)\,:\,
\mathbb{R}\times\mathbb{R}\times X_0\to
\mathbb{R}\times\mathbb{R}\times Y_0$
is invertible.
By the perturbation of the resolvent,
one can see that
$H_{(\varLambda, \gamma, \widetilde{\phi}, \widetilde{\psi})}
(\lambda_{\varepsilon}, \gamma_{\varepsilon},
\widetilde{h}_{\varepsilon}$, $\widetilde{k}_{\varepsilon}, \varepsilon)$
is also invertible if $\varepsilon\in (0,\overline{\varepsilon })$
with some $\overline{\varepsilon }>0$.
In view of the left-hand side
of \eqref{Heq2},
one can see that, for any
$\underline{\varepsilon }\in (0, \overline{\varepsilon })$,
there exists $\delta^{*}=\delta^{*}(\underline{\varepsilon })>0$
such that if $\varepsilon\in (\underline{\varepsilon}, \overline{\varepsilon})$
and $\eta\in [0,\delta^{*})$,
the application of the Implicit Function Theorem near
$$(\varLambda, \gamma, \widetilde{\phi}, \widetilde{\psi}, \varepsilon, \eta)=
(\varLambda^\pm(\varepsilon), \gamma^\pm(\varepsilon),
\widetilde{\phi}^\pm(x,\varepsilon), \widetilde{\psi}^\pm(x,\varepsilon), \varepsilon, 0)
$$
enables us to construct the set of solutions of
\eqref{Heq2} expressed as \eqref{eigenf2} and \eqref{eigenlim2}.
Then the proof of Theorem \ref{thm6.2} is complete.
\end{proof}

\begin{proof}[Proof of Theorem \ref{SKTthm}]
By substituting \eqref{ppe2} into \eqref{change} with $w=\alpha v$, then by \eqref{2.5}
we see that
\begin{equation}\label{change2}
\begin{array}{l}
u^\pm_{j,\varepsilon ,\alpha}(x):=
h_{1\varepsilon}(\phi^{\pm}_{j}(x,\varepsilon,\alpha^{-1}),\psi^{\pm}_{j}(x;\varepsilon,\alpha^{-1})),\\
v^\pm_{j,\varepsilon ,\alpha}(x):=
\dfrac{h_{2\varepsilon}(\phi^{\pm}_{j}(x,\varepsilon,\alpha^{-1}),\psi^{\pm}_{j}(x;\varepsilon,\alpha^{-1}))}
{\alpha\varepsilon },
\quad
d_{2}=\dfrac{a_{2}}{\lambda_{j}}-\varepsilon,
\end{array}
\end{equation}
 when
$\varepsilon\in (\underline{\varepsilon}_{j,\pm},
\overline{\varepsilon}_{j,\pm})$ and $\alpha>0$ is sufficiently large.
Hence \eqref{change2} transforms the curve \eqref{ppe2} of
solutions of \eqref{nsemi2} to $\varGamma^{(\alpha )}_{j,\pm}$
stated in Theorem \ref{SKTthm}.
Furthermore, \eqref{uni} follows from the regularity of \eqref{change2} and Theorem \ref{thm6.1}.

The spectral instability of solutions of $\varGamma^{(\alpha )}_{j,\pm}$
follows from the spectral instability of solutions on the curve \eqref{ppe2}
and the regularity of \eqref{change2}, which  completes the proof of Theorem \ref{SKTthm}.
\end{proof}

\section*{Acknowledgment}
Kousuke Kuto  was supported by
was supported by
JSPS KAKENHI Grant-in-Aid for Scientific Research (B),
Grant Number 25K00917, Yaping Wu was supported by NSF of China (No. 12371209 and No. 11871048) and Beijing NSF (No. 1232004).


\end{document}